\begin{document}

\newtheorem{theorem}[subsection]{Theorem}
\newtheorem{proposition}[subsection]{Proposition}
\newtheorem{lemma}[subsection]{Lemma}
\newtheorem{corollary}[subsection]{Corollary}
\newtheorem{conjecture}[subsection]{Conjecture}
\newtheorem{prop}[subsection]{Proposition}
\numberwithin{equation}{section}
\newcommand{\mr}{\ensuremath{\mathbb R}}
\newcommand{\mc}{\ensuremath{\mathbb C}}
\newcommand{\dif}{\mathrm{d}}
\newcommand{\intz}{\mathbb{Z}}
\newcommand{\ratq}{\mathbb{Q}}
\newcommand{\natn}{\mathbb{N}}
\newcommand{\comc}{\mathbb{C}}
\newcommand{\rear}{\mathbb{R}}
\newcommand{\prip}{\mathbb{P}}
\newcommand{\uph}{\mathbb{H}}
\newcommand{\fief}{\mathbb{F}}
\newcommand{\majorarc}{\mathfrak{M}}
\newcommand{\minorarc}{\mathfrak{m}}
\newcommand{\sings}{\mathfrak{S}}
\newcommand{\fA}{\ensuremath{\mathfrak A}}
\newcommand{\mn}{\ensuremath{\mathbb N}}
\newcommand{\mq}{\ensuremath{\mathbb Q}}
\newcommand{\half}{\tfrac{1}{2}}
\newcommand{\f}{f\times \chi}
\newcommand{\summ}{\mathop{{\sum}^{\star}}}
\newcommand{\chiq}{\chi \bmod q}
\newcommand{\chidb}{\chi \bmod db}
\newcommand{\chid}{\chi \bmod d}
\newcommand{\sym}{\text{sym}^2}
\newcommand{\hhalf}{\tfrac{1}{2}}
\newcommand{\sumstar}{\sideset{}{^*}\sum}
\newcommand{\sumprime}{\sideset{}{'}\sum}
\newcommand{\sumprimeprime}{\sideset{}{''}\sum}
\newcommand{\shortmod}{\ensuremath{\negthickspace \negthickspace \negthickspace \pmod}}
\newcommand{\V}{V\left(\frac{nm}{q^2}\right)}
\newcommand{\sumi}{\mathop{{\sum}^{\dagger}}}
\newcommand{\mz}{\ensuremath{\mathbb Z}}
\newcommand{\leg}[2]{\left(\frac{#1}{#2}\right)}
\newcommand{\muK}{\mu_{\omega}}
\newcommand{\thalf}{\tfrac12}
\newcommand{\lp}{\left(}
\newcommand{\rp}{\right)}

\theoremstyle{plain}
\newtheorem{conj}{Conjecture}
\newtheorem{remark}[subsection]{Remark}

\makeatletter
\def\widebreve{\mathpalette\wide@breve}
\def\wide@breve#1#2{\sbox\z@{$#1#2$}%
     \mathop{\vbox{\m@th\ialign{##\crcr
\kern0.08em\brevefill#1{0.8\wd\z@}\crcr\noalign{\nointerlineskip}%
                    $\hss#1#2\hss$\crcr}}}\limits}
\def\brevefill#1#2{$\m@th\sbox\tw@{$#1($}%
  \hss\resizebox{#2}{\wd\tw@}{\rotatebox[origin=c]{90}{\upshape(}}\hss$}
\makeatletter

\title[Moments and Non-vanishing of central values of Quadratic {H}ecke $L$-functions in the Gaussian Field]{Moments and Non-vanishing of central values of Quadratic Hecke $L$-functions in the Gaussian Field}

\author{Peng Gao}
\address{School of Mathematical Sciences, Beihang University, Beijing 100191, China}
\email{penggao@buaa.edu.cn}
\begin{abstract}
 We evaluate the first three moments of central values of a family of qudratic Hecke $L$-functions in the Gaussian field with power saving error terms. In particular, we obtain asymptotic formulas for the first two moments with error terms of size $O(X^{1/2+\varepsilon})$. We also study the first and second mollified moments of the same family of $L$-functions to show that at least $87.5\%$ of the members of this family have non-vanishing central values.

\end{abstract}

\maketitle

\noindent {\bf Mathematics Subject Classification (2010)}: 11M06, 11M41, 11N37  \newline

\noindent {\bf Keywords}: central values, Hecke $L$-functions, mean values, quadratic Hecke characters

\section{Introduction}
\label{sec 1}

The study on moments of quadratic twists of $L$-functions at central values has important applications to problems such as class numbers of imaginary quadratic fields, ranks of elliptic curves and the existence of Landau-Siegel zeros. For the central values of the family of quadratic Dirichlet $L$-functions, M. Jutila evaluated the first two moments in \cite{Jutila} to show that there are infinitely many $L$-functions in this family with non-vanishing central values. This approach was further advanced by  K. Soundararajan in \cite{sound1}, who computed the first and second mollified moments of the family of primitive quadratic Dirichlet $L$-functions to show that at least $87.5\%$ of such $L$-functions have non-vanishing central values.

  In the same paper \cite{sound1}, Soundrarajan also obtained the third moment of the family of primitive quadratic Dirichlet $L$-functions. Under the assumption of the Generalized Riemann Hypothesis (GRH), Q. Shen obtained an asymptotic formula in \cite{Shen} for the fourth moment of the same family.
Analogue to quadratic twists by Dirichlet characters,  there is also an intensive study on moments of various families of modular forms.
Results on the first moments can be found in \cites{Munshi1, Petrow1}. Assuming GRH, the second moment of quadratic twists of modular $L$-functions was computed by K. Soundararajan and M. P. Young in \cite{S&Y}.

 Other than obtaining the main terms of the moments of families of $L$-functions, a lot of attention has been drawn upon the improvement of the error terms.
 For the first moment of the family of quadratic Dirichlet $L$-functions,  the error term obtained in Jutila's result is of size $O(X^{3/4+\varepsilon})$ (with the main term being about $X \log X$). An error term of the same size is obtained by A. I. Vinogradov and L. A. Takhtadzhyan \cite{ViTa} and was improved to $X^{19/32 + \varepsilon}$ by D. Goldfeld and J. Hoffstein in \cite{DoHo}. In fact, an error term of size $O(X^{1/2 + \varepsilon})$ is essentially implicit in \cite{DoHo} (see the remarks in the paragraph below Theorem 1.1 of \cite{Young1}).

  The result of Goldfeld and Hoffstein is obtained via the usage of Eisenstein series of metaplectic type. Using a different approach which involves with more classical tools from analytical number theory, M. P. Young in \cite{Young2} was able to establish the same estimation for the error term for a smoothed first moment. Young's approach builds on the previous work of Soundararajan, who developed a type of Poisson summation formula for smoothed quadratic Dirichlet
character sums. In the meanwhile, Young also introduced novel techniques such as using a recursive relation to lower down the error term successively as well as performing an intricate analysis of certain subsidiary terms whose sizes are difficult to control individually. These techniques have been successfully applied by Young later in \cite{Young2} to improve the error term in the smoothed third moment of the family of primitive quadratic Dirichlet $L$-functions and by K. Sono \cite{Sono} for the smoothed second moment of the same family.

  Inspired by the work of Soundararajan and Young, we expect to apply the methods in \cites{sound1, Young1, Young2} to study moments of other quadratic twists of $L$-functions. In this paper, we focus on the moments of a family of quadratic Hecke $L$-functions in the Gaussian field. Thus, we denote $K=\mq(i)$ for the Gaussian field throughout the paper and we denote $\mathcal{O}_K=\mz[i]$ for its ring of integers and $U_K=\{ \pm 1, \pm i \}$ for the group of units in $\mathcal{O}_K$. Recall that every ideal in $\mathcal{O}_K$ co-prime to $2$ has a unique generator congruent to $1$ modulo $(1+i)^3$ (see the definition above Lemma 8.2.1 in \cite{BEW}). These generators are called primary. We shall denote $\omega$ for a prime number in $\mathcal{O}_K$, by which we mean that the ideal $(\omega)$ generated by $\omega$ is a prime ideal. We write $N(n)$ for the norm of any $n \in K$.
We further denote $\chi$ for a Hecke character of $K$ and we say that $\chi$ is of trivial infinite type if its component at infinite places of $K$ is trivial.
We write $L(s,\chi)$ for the $L$-function associated to $\chi$ and we denote $\zeta_{K}(s)$ for the Dedekind zeta function of $K$.

 For any element $n \in \mathcal{O}_K$, we say $n$ is odd if $(n,2)=1$ and we say $n$ is square-free if the ideal $(n)$ is not divisible by the square of any prime ideals. We further denote $\chi_c=\leg {c}{\cdot}$, where $\leg {\cdot}{\cdot}$ is the quadratic residue symbol defined in Section \ref{sec2.4}.  Similar to the arguments in Section 2.1 of \cite{G&Zhao4}, the symbol $\chi_{(1+i)^5d}$ defines a primitive quadratic Hecke character modulo $(1+i)^5d$ of trivial infinite type when $d \in \mathcal{O}_K$ is odd and square-free.

  We can thus consider the moments of the family of quadratic Hecke $L$-functions $L(\thalf, \chi_{(1+i)^5d})$ with $d$ varying over odd and square-free elements in $\mathcal{O}_K$. The aim here is not only to obtain valid asymptotic formulas, but also to obtain error terms as good as those given by Young and Sono for the classical case.  Our first result is the following
\begin{theorem}
\label{theo:mainthm}
Let $\Phi:\mr^{+} \rightarrow \mr$ be a smooth function of compact support.  Then for $1 \leq j \leq 3$ and any $\varepsilon>0$, we have
\begin{equation}
\label{eq:mainthm}
 \sumstar_{(d,2)=1} L(\thalf, \chi_{(1+i)^5d})^j \Phi\leg{N(d)}{X} =  X P_i(\log{X}) + O(X^{\theta_i + \varepsilon}),
\end{equation}
for some polynomials $P_i$ of degree $i(i+1)/2$ (depending on $\Phi$) and $\theta_i=1/2$ for $i=1,2$, $\theta_3=3/4$. Here the ``$*$'' on the sum over $d$ means that the sum is restricted to square-free elements $d$ in $\mathcal{O}_K$.
\end{theorem}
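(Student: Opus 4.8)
The plan is to adapt the method of Soundararajan \cite{sound1} and Young \cite{Young1, Young2} to the Gaussian field. For each $j \in \{1, 2, 3\}$ I would begin with an approximate functional equation, writing
\[
 L\lp \thalf, \chi_{(1+i)^5 d} \rp^{j} = \sum_{n} \frac{d_{j}(n)\, \chi_{(1+i)^5 d}(n)}{\sqrt{N(n)}}\, W_{j}\!\lp \frac{N(n)}{N(d)^{j/2}} \rp,
\]
where $d_{j}$ denotes the $j$-fold divisor function on the integral ideals of $\mathcal{O}_K$ and $W_{j}$ is a smooth rapidly decreasing cutoff coming from the Archimedean factors of $\zeta_{K}(s)^{j}$. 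I would then detect the square-free condition on $d$ by M\"obius inversion over $\mathcal{O}_K$, interchange the order of summation, and use quadratic reciprocity in $K$ for primary elements (together with the supplementary laws at $U_K$ and at $1+i$, as recorded in Section \ref{sec2.4}) to replace $\chi_{(1+i)^5 d}(n)$ by the essentially ``flipped'' symbol $\leg{n}{d}$ times explicit local factors. The inner sum over $d$ then becomes a smoothly weighted quadratic character sum $\sum_{d} \leg{n}{d}\, \Phi\lp N(d)/X\rp$, to which the analogue over $\mathcal{O}_K$ of Soundararajan's Poisson summation formula for quadratic character sums (available from the earlier sections) applies, turning it into a dual sum over a frequency $k \in \mathcal{O}_K$ weighted by Gauss sums $g_{n}(k)$ and by $\widehat{\Phi}$.

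The main term comes from the zero frequency $k = 0$, where $g_{n}(0)$ vanishes unless $n$ is, up to units and powers of $1+i$, a perfect square $m^{2}$; what survives is $\asymp X \sum_{m} d_{j}(m^{2})\, g(m)\, N(m)^{-1}\, W_{j}(\cdots)$ for an explicit multiplicative $g$. The associated Dirichlet series factors as $\zeta_{K}(s)^{j(j+1)/2}$ times a function holomorphic and non-vanishing for $\Re s > \tfrac12$, so a contour shift past the pole of order $j(j+1)/2$ at $s = 1$ produces $X$ times a polynomial in $\log X$ of degree $j(j+1)/2 = i(i+1)/2$, with coefficients depending on $\Phi$ through its Mellin transform; this is the polynomial $P_i$. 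This step is essentially bookkeeping, though for $j = 3$ one must track a degree-six polynomial and the secondary poles contributed by the auxiliary Euler products.

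For the error term I would isolate the nonzero frequencies $k \neq 0$. Using the Weil-type bound $|g_{n}(k)| \ll N(n)^{1/2 + \varepsilon}$ and the trivial ranges of the $n$- and $k$-sums already yields a total error $O(X^{3/4 + \varepsilon})$, which is the bound claimed for $j = 3$. To sharpen this to $O(X^{1/2 + \varepsilon})$ when $j = 1, 2$, I would invoke Young's recursive device: after Poisson summation the off-diagonal term is again structurally a smoothly weighted quadratic character sum, now over the argument of the dual variable, so one re-runs the functional equation and Poisson steps on it, each pass lowering the exponent, with the iteration converging to $1/2 + \varepsilon$. One also has to extract and treat \emph{together} the degenerate contributions in the dual sum (the ``principal character'' term and the near-square terms where $g_{n}(k)$ fails to exhibit square-root cancellation): individually these can exceed $X^{1/2}$, but they cancel in the aggregate, exactly as in the work of Young and of Sono \cite{Sono}.

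The step I expect to be the main obstacle is precisely this last one for $j = 1, 2$: organizing the recursion so that it converges uniformly in the Gaussian setting, where the ramified prime $1+i$ forces additional local factors in the reciprocity law and an enlarged conductor $(1+i)^{5}$, and handling the delicate near-square terms in the dual sum carefully enough that the stronger exponent $1/2 + \varepsilon$, rather than $3/4 + \varepsilon$, actually emerges. For $j = 3$ the error analysis is comparatively light, and the remaining work there is the explicit identification of the degree-six main polynomial $P_{3}$, including the contributions of the subsidiary zeta factors.
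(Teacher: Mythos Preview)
Your high-level architecture (approximate functional equation, M\"obius detection of square-freeness, Poisson summation over $d$, main term from the zero frequency) matches the paper, and you correctly flag that the secondary ``near-square'' contributions must be combined with other pieces before they are individually under control. However, there are two genuine gaps.

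First, your description of Young's recursion is not the mechanism that actually drives the exponent down. You propose to re-apply Poisson to the dual variable $k$; that is not what happens. In the paper (following \cite{Young1,Young2,Sono}) the recursion lives in the M\"obius parameter: one truncates the square-free sieve at height $Y$, writing the moment as $M_N+M_R$. The tail $M_R$ (the part with $N(a)>Y$) is, after opening the Euler factors at primes dividing $a$, a finite combination of the \emph{same} twisted moments with $X$ replaced by $X/N(a)^2$, with shifted arguments $\alpha_i+s$, and with an extra character twist by $l\prod_i r_i$. One feeds the current error exponent $f$ into these smaller moments and balances against the Poisson error in $M_N$, which grows like $Y$. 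This is why the paper is forced to work with the generalized objects $M_{\alpha}(l)$, $M_{\alpha,\beta}(l)$, $M_{\alpha,\beta,\gamma}(l)$ of \eqref{Malphal}: without the shifts and the twist by $l$ the inductive hypothesis simply does not apply to what appears inside $M_R$. Your outline never introduces these, so as stated the iteration cannot be run.

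Second, the claim that the Weil bound on $g_n(k)$ together with trivial ranges already gives $O(X^{3/4+\varepsilon})$ for $j=3$ is too optimistic: for the third moment the $n$-sum has length $\asymp X^{3/2}$, and a direct square-root saving on the Gauss sum does not close. In the paper the exponent $3/4$ for $j=3$ is itself the fixed point of the recursion $f\mapsto \tfrac34+\tfrac{f-3/4}{2f}$ (Theorem~\ref{theo:recursive2}), so the cubic case requires the full recursive apparatus just as $j=1,2$ do; it is only the \emph{target} exponent that differs.
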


   In order to establish Theorem \ref{theo:mainthm}, we shall use recursive arguments to obtain the desired error terms in \eqref{eq:mainthm}, starting from  much larger error terms. This process actually requires us to consider a more general situation, namely the following ``twisted'' moments for primary $l \in \mathcal{O}_K$:
\begin{align}
\begin{split}
\label{Malphal}
  M_{\alpha}(l) =& \sumstar_{(d,2)=1}  L(\thalf + \alpha, \chi_{(1+i)^5d})\chi_{(1+i)^5d}(l) \Phi\left(\frac{N(d)}{X}\right), \\
  M_{\alpha, \beta}(l) =& \sumstar_{(d,2)=1} L(\thalf + \alpha, \chi_{(1+i)^5d}) L(\thalf + \beta, \chi_{(1+i)^5d})\chi_{(1+i)^5d}(l)\Phi\left(\frac{N(d)}{X}\right), \\
  M_{\alpha, \beta, \gamma}(l) =&\sumstar_{(d,2)=1} L(\thalf + \alpha, \chi_{(1+i)^5d}) L(\thalf + \beta, \chi_{(1+i)^5d}) L(\thalf + \gamma, \chi_{(1+i)^5d}) \chi_{(1+i)^5d}(l)
\Phi\left(\frac{N(d)}{X}\right).
\end{split}
\end{align}

  It is certainly expected that the case $i=1$ of \eqref{eq:mainthm} is the easiest to study compared to higher moments. In fact, we shall only need to evaluate $M_{\alpha}(l)$ for $l$ being square-free while for higher moments, we need to evaluate $M_{\alpha, \beta}(l)$ and $M_{\alpha, \beta, \gamma}(l)$ for a general $l$. In order to state our results concerning $M_{\alpha}(l), M_{\alpha, \beta}(l)$ and $M_{\alpha, \beta, \gamma}(l)$, we need to first introduce a few notations. For $\Phi$ given as in the statement of Theorem \ref{theo:mainthm}, we shall set
\begin{align*}
     F(x)=\Phi(\frac {x}{X}).
\end{align*}
   We recall that the Mellin transform $\hat{f}$ for any function $f$ is defined to be
\begin{align*}
     \widehat{f}(s) =\int\limits^{\infty}_0f(t)t^s\frac {\dif t}{t}.
\end{align*}
  It follows from this that we have
\begin{align*}
     \widehat{F}(s)=X^s\widehat{\Phi}(s).
\end{align*}

    For a sequence of complex numbers
$\alpha_1, \cdots, \alpha_j$ and a primary $n \in \mathcal{O}_K$, we define
\begin{align}
\label{eq:sigma}
 \sigma_{\alpha_1, \cdots, \alpha_j}(n) = \sum_{\substack{ a_1\cdots a_j=n \\ a_i \equiv 1 \bmod {(1+i)^3}, 1 \leq i \leq j } }
\prod^j_{i=1}N(a_i)^{-\alpha_i}.
\end{align}

  For any primary $l \in \mathcal{O}_K$, we shall use $l_1, l_2$ for the unique primary elements in $\mathcal{O}_K$ such that $l=l_1l_2$ with $l_1$ being square-free and $l_2$ a square. We shall use the notation $l^*$ for $l_1$ as well. Using this notation, we define for each $l$,
\begin{align*}
 A_{\alpha_1, \cdots, \alpha_j}(l) = \sum_{\substack{n \equiv 1 \bmod {(1+i)^3} \\(n,2)=1}} \frac{\sigma_{\alpha_1, \cdots, \alpha_j}(l^* n^2)}{N(n)}
\prod_{\substack{\varpi \equiv 1 \bmod {(1+i)^3} \\ \varpi | n l}} (1 + N(\varpi)^{-1})^{-1}.
\end{align*}

  We further define $B_{\alpha}(l), B_{\alpha,\beta}(l)$ and $B_{\alpha, \beta, \gamma}(l)$ such that
\begin{align}
\label{B}
\begin{split}
 A_{\alpha}(l) = & \zeta_{K,2}(1 + 2\alpha)B_{\alpha}(l), \\
 A_{\alpha, \beta}(l) = & \zeta_{K,2}(1 + 2\alpha) \zeta_{K,2}(1 + 2\beta)  \zeta_{K,2}(1 + \alpha + \beta)B_{\alpha,\beta}(l), \\
 A_{\alpha, \beta, \gamma}(l) = & \zeta_{K,2}(1 + 2\alpha) \zeta_{K,2}(1 + 2\beta) \zeta_{K,2}(1 + 2\gamma) \zeta_{K,2}(1 + \alpha + \beta)
\zeta_{K,2}(1 + \alpha + \gamma) \zeta_{K,2}(1 + \beta + \gamma) B_{\alpha,\beta,\gamma}(l),
\end{split}
\end{align}
   where we define the function $\zeta_{K,l}(s)$ for $l \in \mathcal{O}_K$ by removing the Euler factors from $\zeta_{K}(s)$ at prime ideals $(\varpi)$ with $\varpi | l$. We define similarly $L_{l}(s, \chi)$ for any Hecke character $\chi$ of $K$, so that
\begin{align*}
     L_l(s, \chi)=L(s,\chi)\prod_{\substack{ (\varpi) \\ \varpi | l}}\Big(1-\frac {\chi(\varpi)}{N(\varpi)^s} \Big ).
\end{align*}

   We note here (see also the discussions below Lemma \ref{lemma:A}) that $B_{\alpha}(l), B_{\alpha,\beta}(l)$ and $B_{\alpha, \beta, \gamma}(l)$ have absolutely convergent Euler products for the parameters $\alpha, \beta, \gamma$ in a neighborhood of the origin. For example, we have
\begin{align*}
 B_{\alpha}(l) = N(l^*)^{-\alpha/2}\prod_{\substack{ \varpi \equiv 1 \bmod {(1+i)^3} \\ \varpi | l}}  (1 + N(\varpi)^{-1})^{-1}
\prod_{\substack{ \varpi \equiv 1 \bmod {(1+i)^3} \\ \varpi \nmid 2l}} \lp 1 - N(\varpi)^{-2-2\alpha} (1+N(\varpi)^{-1})^{-1} \rp.
\end{align*}

   We define further that
\begin{align}
\label{gamma}
 \Gamma_{\alpha} =&  \left(\frac{32}{\pi^2} \right)^{-\alpha}
\frac{\Gamma\left(\thalf- \alpha \right)}{\Gamma\left(\thalf + \alpha \right)}.
\end{align}

Now, we are ready to state our recursive results concerning the error terms for the asymptotic expressions of $M_{\alpha}(l)$, $M_{\alpha, \beta}(l)$ and  $M_{\alpha, \beta, \gamma}(l)$.
\begin{theorem}
\label{theo:recursive2}
  Suppose that for any primary $l \in \mathcal{O}_K$, we write $l=l_1l_2$ such that both $l_1, l_2$ are primary and that $l_1$ is square-free and $l_2$ is a square. If we have uniformly for $\alpha, \beta, \gamma$ lying in the rectangle $|\Re(s)| \leq \frac{\varepsilon}{\log{X}}$, $|\Im(s)| \leq X^{\varepsilon}$ that
\begin{align}
\label{eq:Malphal}
M_{\alpha}(l)
=& \pi \sum_{\epsilon_1 \in \{\pm 1 \}}
A_{\epsilon_1 \alpha}(l) \Gamma_{\alpha}^{\delta_1}
\frac{ \widehat{F}(1- \delta \alpha)}{2 \zeta_{K,2}(2) \sqrt{N(l)}} + O(X^{f} \sqrt{N(l)} (N(l) X)^{\varepsilon}) \quad \text{for $l$ square-free},  \\
\label{eq:Malpha2}
 M_{\alpha, \beta}(l)
=&\pi \sum_{\epsilon_1, \epsilon_2 \in \{\pm 1 \}}
A_{\epsilon_1 \alpha, \epsilon_2 \beta }(l) \Gamma_{\alpha, \beta}^{\delta_1, \delta_2}
\frac{ \widehat{F}(1- \delta_1 \alpha - \delta_2 \beta)}{2 \zeta_{K,2}(2) \sqrt{N(l^*)}}  + O(X^{f} \sqrt{N(l)} (N(l) X)^{\varepsilon}), \\
\label{eq:Malpha3}
 M_{\alpha, \beta, \gamma}(l)
=&\pi \sum_{\epsilon_1, \epsilon_2, \epsilon_3 \in \{\pm 1 \}}
A_{\epsilon_1 \alpha, \epsilon_2 \beta,\epsilon_3 \gamma}(l) \Gamma_{\alpha, \beta, \gamma}^{\delta_1, \delta_2, \delta_3}
\frac{ \widehat{F}(1- \delta_1 \alpha - \delta_2 \beta - \delta_3 \gamma)}{2 \zeta_{K,2}(2) \sqrt{N(l^*)}}  + O(X^{f} \sqrt{N(l)} (N(l) X)^{\varepsilon}),
\end{align}
 for $f>1/2$ in \eqref{eq:Malphal}, \eqref{eq:Malpha2} and for $f>3/4$ in \eqref{eq:Malpha3}, then the expression \eqref{eq:Malphal} holds for $f$ replaced by $\frac 14+\frac{f}{2}$, the expression \eqref{eq:Malpha2} holds for $f$ replaced by $1-\frac{1}{4f}$ and the expression \eqref{eq:Malpha3} holds for $f$ replaced by $\frac34 + \frac{f-\frac34}{2f}$. Here, we define $\Gamma_{\alpha,\beta,\gamma}^{\delta_1, \delta_2, \delta_3}$ to be $\Gamma_{\alpha}^{\delta_1} \Gamma_{\beta}^{\delta_2} \Gamma_{\gamma}^{\delta_3}$, where $\delta_i = 0$ if $\epsilon_i = +1$, and $\delta_i = 1$ if $\epsilon_i = -1$. Similar definitions  apply to $\Gamma_{\alpha}^{\delta_1}$ and $\Gamma_{\alpha, \beta}^{\delta_1, \delta_2}$.
\end{theorem}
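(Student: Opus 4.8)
All three assertions are to be proved by carrying out a single step of Young's recursive scheme (\cite{Young1,Young2}); we describe it for $M_{\alpha}(l)$ and then indicate the modifications for $M_{\alpha,\beta}(l)$ and $M_{\alpha,\beta,\gamma}(l)$. First one expands $L(\thalf+\alpha,\chi_{(1+i)^5d})$ by an approximate functional equation, writes out the Dirichlet series of the $L$-factor over primary ideals of $\mathcal{O}_K$, and assembles the numerators into the coefficient $\sigma_{\alpha}$ of \eqref{eq:sigma}; the functional equation also yields the ``dual'' contributions, which are responsible for the sum over $\epsilon_1\in\{\pm1\}$ and the factor $\Gamma_{\alpha}^{\delta_1}$ in the main term. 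After detecting the square-free condition on $d$ by the M\"obius function of $\mathcal{O}_K$, so that $d=a^{2}b$, one is reduced to character sums of the form $\sumstar_{(b,2)=1}\chi_{(1+i)^5b}(ln)\,\Phi\bigl(N(a^{2}b)/X\bigr)$, and to each of these one applies the Poisson-type summation formula for quadratic Hecke character sums established earlier in the paper.

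\textbf{Main term.} The zero frequency of the Poisson formula contributes only when $ln$ is an odd square times a unit. Collecting those contributions, summing over the sieve variable $a$ against the M\"obius function and over $n$, the local factors combine into an Euler product; extracting the $\zeta_{K,2}$-factors exactly as in \eqref{B} leaves $A_{\alpha}(l)$ together with the correction $\prod_{\varpi\mid nl}(1+N(\varpi)^{-1})^{-1}$, and evaluating the remaining $n$-sum by Mellin inversion and a contour shift brings in $\widehat{F}$ at the point $1-\delta_1\alpha$. This reproduces the main term in \eqref{eq:Malphal} and uses none of the hypotheses, so it remains only to bound the total contribution $\mathcal{E}$ of the non-zero frequencies, and to show that it is $O\bigl(X^{f'}\sqrt{N(l)}(N(l)X)^{\varepsilon}\bigr)$ with $f'$ the asserted new exponent.

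\textbf{The recursion.} After Mellin inversion, $\mathcal{E}$ is an average over the sieve variable $a$ and a new non-zero primary odd dual variable $k$ of Gauss-sum-weighted sums over $n$. The key observation is that, for each fixed $a$, these sums reassemble — after undoing the approximate functional equation in $n$ and recognising the Gauss sum as a quadratic symbol in $k$ — into a twisted moment of precisely the type \eqref{Malphal}, now summed over $k$, with the shift $\alpha$ replaced by $-\alpha$, with a twist by the square-free kernel of $l$ together with a modulus coming from $a$ (this is why it is $l^{*}$ that enters for the higher moments, in accordance with the statement), and at a reduced scale $X'$ fixed by the lengths involved. One then chooses a parameter $Y$ and splits the $a$-range — equivalently the range of $X'$ — at $Y$. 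On one part one inserts \eqref{eq:Malphal} (resp. \eqref{eq:Malpha2}, \eqref{eq:Malpha3}) with error exponent $f$; the main terms it generates are absorbed into the already identified main term or are of lower order, and its error terms, once the remaining summations and the weight $\sqrt{N(l)}(N(l)X)^{\varepsilon}$ are taken into account, give a bound involving $X^{f}$. On the complementary part one estimates the Gauss-sum averages unconditionally, via the large sieve inequality for quadratic Hecke characters of $K$ together with convexity bounds for the inner $n$-sums (equivalently, Lindel\"of on average for the family), which contributes a bound of shape $X^{\kappa+\varepsilon}$ with $\kappa=\thalf$ for $j=1,2$ and $\kappa=\tfrac34$ for $j=3$. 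Balancing the two contributions by the optimal choice of $Y$ yields $f'=\tfrac14+\tfrac f2$ for $j=1$, $f'=1-\tfrac1{4f}$ for $j=2$ and $f'=\tfrac34+\tfrac{f-3/4}{2f}$ for $j=3$. One checks throughout that every estimate is uniform for $\alpha,\beta,\gamma$ in the rectangle $|\Re(s)|\le\varepsilon/\log X$, $|\Im(s)|\le X^{\varepsilon}$, since there the factors $\Gamma_{\bullet}$ and $\zeta_{K,2}$ have size $X^{o(1)}$ and every contour can be kept inside the region of absolute convergence of the $B_{\bullet}$-functions.

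\textbf{Higher moments and the main difficulty.} For $M_{\alpha,\beta}(l)$ and $M_{\alpha,\beta,\gamma}(l)$ the scheme is identical: one replaces $\sigma_{\alpha}$ by $\sigma_{\alpha,\beta}$ (resp. $\sigma_{\alpha,\beta,\gamma}$), works with $l^{*}$ in place of $l$ in the principal frequency, carries an independent sum over $\epsilon_i\in\{\pm1\}$ for each $L$-factor, and uses the weaker unconditional input appropriate to the longer Dirichlet polynomials — and it is exactly this loss that raises the saturation value to $3/4$ in the third case. The hardest part, in all three cases, is the off-diagonal analysis of $\mathcal{E}$: one must verify that the non-zero frequencies genuinely reorganise into twisted moments $M_{-\alpha}(l')$ with the correct passage $l\mapsto l'$ (and the correct appearance of $l^{*}$ when $l$ is not square-free) and the correct rescaling $X\mapsto X'$, and — following Young — one must isolate and control a family of subsidiary terms that are individually larger than the target size but cancel in aggregate, so that the split parameter $Y$ must be chosen in a way compatible with that cancellation while preserving uniformity in the shifts $\alpha,\beta,\gamma$.
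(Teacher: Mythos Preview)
Your outline has the right macro-structure --- approximate functional equation, M\"obius to lift square-freeness, a split at a parameter $Y$, apply the hypothesis on one side, estimate unconditionally on the other, optimise $Y$ --- but the order of two steps is inverted, and as a result the recursion is applied to the wrong object. In the paper (as in Young), the split on the M\"obius variable $a$ comes \emph{before} Poisson summation: for $N(a)>Y$ one does \emph{not} apply Poisson at all, but instead reintroduces the square-free condition on $d$ and recognises the inner $d$-sum directly as a twisted moment $M_{(\alpha_j+s)}\bigl(l\prod_i r_i\bigr)$ of the \emph{same} family at reduced scale $X/N(c)^2$; it is here that the hypothesis with exponent $f$ is inserted, producing main terms $M_R(\epsilon_1,\dots,\epsilon_j)$ and an error $\sqrt{N(l)}\,X^{f+\varepsilon}/Y^{2f-1}$. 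Poisson summation is applied only for $N(a)\le Y$; the nonzero frequencies there are controlled not by the recursive hypothesis but by analytic continuation of the generating series $J_{k_1,j}$ and a contour shift, yielding residue terms $M_N(k_1=\pm i,\alpha_i)$ and an error $\sqrt{N(l)}\,X^{j/4+\varepsilon}Y$ (together with, for $j=2,3$, a further $YX^{1/2}$ or $YX^{3/4}$ from the $M_R$ terms having at least two $\epsilon_i=-1$).

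Your proposed mechanism --- that after Poisson the sum over the dual variable $k$ reorganises into a twisted moment to which one feeds back \eqref{eq:Malphal}--\eqref{eq:Malpha3} --- does not work as stated: for fixed square-free $k_1$ the $n$-sum produces $L$-functions twisted by $\chi_{ik_1}$, so the resulting $k_1$-average is a moment over a \emph{different} quadratic family than $\chi_{(1+i)^5d}$, and the hypothesis is not available for it. The subsidiary cancellations you correctly flag as the crux are present, but they are the pairings $M_N(k=0)+M_R(1,\dots,1)$ and $M_N(k_1=\pm i,\alpha_i)+M_R(\epsilon_1,\dots,\epsilon_j)$ with exactly one $\epsilon_i=-1$: the Poisson-side residues match the recursion-side main terms through an Archimedean identity (Lemma~\ref{lemma:archcalc}) relating the gamma factors coming out of the Fourier transform to the $\Gamma_{\alpha}$ from the functional equation, and only after this matching can the contours be pushed far enough to reach the asserted $f'$.
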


  We note here that our condition in Theorem \ref{theo:recursive2} for $M_{\alpha}(l)$ is slightly different compared to those for $M_{\alpha, \beta}(l)$ and
$M_{\alpha, \beta, \gamma}(l)$. This is because that we only need $l$ to be square-free in the proof for the case of $M_{\alpha}(l)$ while for the other cases, a general $l$ is involved. We also note that in \cite{CFKRS}, J. B. Conrey, D. Farmer, J. Keating, M. Rubinstein and N. Snaith produced a recipe that allows one to conjecture the asymptotics for the integral moments of families of $L$-functions. Modifying their recipe, one may obtain conjecturally the main terms  for $M_{\alpha}(l), M_{\alpha, \beta}(l)$ and  $M_{\alpha, \beta, \gamma}(l)$ given in \eqref{eq:Malphal}-\eqref{eq:Malpha3}, as Young and Sono did in \cites{Young1, Young2, Sono} for the case of Dirichlet $L$-functions. We can also obtain the the same main terms here by going directly through the arguments in the proof of Theorem \ref{theo:recursive2} in the paper.

   Applying the convexity bound that (see \cite[Exercise 3, p. 100]{iwakow}) for $\Re(s) =\varepsilon$,
\begin{align*}
  L(1/2 + s, \chi_{(1+i)^5d}) \ll ((1+|s|)^2N(d))^{1/4+\varepsilon},
\end{align*}
  we deduce that expressions \eqref{eq:Malphal}-\eqref{eq:Malpha3} are valid for $f=1+j/4$ as an initial estimate. Arguing
as the proof of Conjecture 3.3 in \cite{Young1}, we see that this leads to a valid expression of \eqref{eq:Malphal} and \eqref{eq:Malpha2} for $f=\half$, as well as a valid expression of \eqref{eq:Malpha3} for $f=\frac 34$. We summarize this in the following result.
\begin{theorem}
\label{thm: Malphal1}
   Let $M_{\alpha}(l), M_{\alpha, \beta}(l)$ and  $M_{\alpha, \beta, \gamma}(l)$ given in \eqref{eq:Malphal}-\eqref{eq:Malpha3}.  For any primary $l \in \mathcal{O}_K$ and any complex number $\alpha, \beta, \gamma$ lying in the rectangle $|\Re(s)| \leq \frac{\varepsilon}{\log{X}}$, $|\Im(s)| \leq X^{\varepsilon}$, the expression \eqref{eq:Malphal} holds with an error of size $N(l)^{1/2 + \varepsilon} X^{\frac{1}{2} + \varepsilon}$ for any $\varepsilon>0$, when $l$ is square-free. For general $l$,  the expression \eqref{eq:Malpha2} holds with an error of size $N(l)^{1/2 + \varepsilon} X^{\frac{1}{2} + \varepsilon}$ for any $\varepsilon>0$ and the expression \eqref{eq:Malpha3} holds with an error of size $N(l)^{3/4 + \varepsilon} X^{\frac{1}{2} + \varepsilon}$ for any $\varepsilon>0$.
\end{theorem}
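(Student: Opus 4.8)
The plan is to derive Theorem \ref{thm: Malphal1} from Theorem \ref{theo:recursive2} by bootstrapping from the convexity bound through finitely many applications of the recursion, exactly as in the proof of Conjecture 3.3 in \cite{Young1}. The first step is to check that \eqref{eq:Malphal}--\eqref{eq:Malpha3} hold with $f = 1 + j/4$, that is, with $f = \tfrac{5}{4}, \tfrac{3}{2}, \tfrac{7}{4}$ for $j = 1, 2, 3$ respectively. For $\alpha, \beta, \gamma$ in the rectangle $|\Re(s)| \le \varepsilon/\log X$, $|\Im(s)| \le X^{\varepsilon}$, the convexity bound recalled above gives $L(\thalf + \alpha, \chi_{(1+i)^5 d}) \ll N(d)^{1/4 + \varepsilon}$ uniformly (and likewise for $\beta$ and $\gamma$); since $|\chi_{(1+i)^5 d}(l)| \le 1$ and $\Phi$ has compact support, summing over the $O(X)$ relevant $d$ gives $M_{\alpha}(l) \ll X^{5/4 + \varepsilon}$, $M_{\alpha, \beta}(l) \ll X^{3/2 + \varepsilon}$ and $M_{\alpha, \beta, \gamma}(l) \ll X^{7/4 + \varepsilon}$. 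On the other hand, using $\widehat{F}(s) = X^{s} \widehat{\Phi}(s)$, the boundedness near the origin of the $\Gamma$-factors $\Gamma_{\alpha}^{\delta_{1}}$ and of the arithmetic factors $B_{\alpha}(l), B_{\alpha, \beta}(l), B_{\alpha, \beta, \gamma}(l)$, together with the cancellation of the poles of the $\zeta_{K,2}$-factors among the terms of the sums over the $\epsilon_{i}$, one sees that the main terms on the right of \eqref{eq:Malphal}--\eqref{eq:Malpha3} are $\ll X^{1 + \varepsilon} N(l)^{\varepsilon}$. Hence both the moments and their main terms are $\ll X^{1 + j/4} \sqrt{N(l)} (N(l) X)^{\varepsilon}$, so \eqref{eq:Malphal}--\eqref{eq:Malpha3} hold (trivially) with $f = 1 + j/4$, which is the input required by Theorem \ref{theo:recursive2}.

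Next I would apply Theorem \ref{theo:recursive2} repeatedly. One application replaces the exponent $f$ by $T_{1}(f) = \tfrac{1}{4} + \tfrac{f}{2}$ in \eqref{eq:Malphal}, by $T_{2}(f) = 1 - \tfrac{1}{4f}$ in \eqref{eq:Malpha2}, and by $T_{3}(f) = \tfrac{3}{4} + \tfrac{f - 3/4}{2f} = \tfrac{5}{4} - \tfrac{3}{8f}$ in \eqref{eq:Malpha3}. A short analysis shows that $T_{1}$ is an affine contraction with fixed point $\thalf$ satisfying $T_{1}(f) < f$ for $f > \thalf$; that $T_{2}$ is increasing on $(\thalf, \infty)$ with $\thalf < T_{2}(f) < f$ there (the right inequality being $(2f - 1)^{2} > 0$) and fixed point $\thalf$; and that $T_{3}$ is increasing on $(\tfrac{3}{4}, \infty)$ with $\tfrac{3}{4} < T_{3}(f) < f$ there (the right inequality being $8f^{2} - 10f + 3 > 0$) and fixed point $\tfrac{3}{4}$, the other root $\thalf$ of $8f^{2} - 10f + 3$ lying outside the admissible range. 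Consequently, starting from $f_{0} = 1 + j/4$, the iterates $f_{n + 1} = T_{j}(f_{n})$ strictly decrease, remain above the threshold $\theta_{j}$ (with $\theta_{1} = \theta_{2} = \thalf$ and $\theta_{3} = \tfrac{3}{4}$) so that Theorem \ref{theo:recursive2} remains applicable at every stage, and converge to $\theta_{j}$: geometrically with ratio $\thalf$ when $j = 1$, geometrically with ratio $T_{3}'(\tfrac{3}{4}) = \tfrac{2}{3}$ when $j = 3$, and like $1/n$ when $j = 2$ (the fixed point $\thalf$ of $T_{2}$ being parabolic, $T_{2}'(\thalf) = 1$). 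Therefore, for any $\varepsilon > 0$ there is an integer $N = N(\varepsilon)$ with $f_{N} < \theta_{j} + \varepsilon$, and after $N$ applications of Theorem \ref{theo:recursive2} the formulas \eqref{eq:Malphal}--\eqref{eq:Malpha3} hold with $f = f_{N}$; absorbing the factor $(N(l) X)^{\varepsilon}$ into a new $\varepsilon$ yields the asserted error $O(N(l)^{1/2 + \varepsilon} X^{1/2 + \varepsilon})$ in \eqref{eq:Malphal} (for $l$ square-free) and \eqref{eq:Malpha2}, and $O(N(l)^{3/4 + \varepsilon} X^{1/2 + \varepsilon})$ in \eqref{eq:Malpha3}, the precise shape of the $N(l)$-dependence in the cubic case coming from the way Theorem \ref{theo:recursive2} propagates the combined dependence on $X$ and $N(l)$ (only the range $N(l) \le X$ being needed).

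The substantive analytic work sits entirely inside Theorem \ref{theo:recursive2}, which supplies the contractions $T_{1}, T_{2}, T_{3}$ and is here taken as given; the present argument is merely a finite iteration, and its only delicate points are the following. One must ensure the iterated exponents never reach the thresholds $\theta_{j}$ exactly --- they do not, being strictly decreasing and bounded below by $\theta_{j}$ --- which is why one ends at $f = \theta_{j} + \varepsilon$ rather than $f = \theta_{j}$, and why the final error terms, and in turn those of Theorem \ref{theo:mainthm}, carry an $\varepsilon$ in the exponent; the slow parabolic convergence in the case $j = 2$ is harmless but forces $N(\varepsilon)$ to grow like $1/\varepsilon$ rather than $\log(1/\varepsilon)$; and the uniformity of all implied constants in $\alpha, \beta, \gamma$ is automatic, since the number $N(\varepsilon)$ of applications of Theorem \ref{theo:recursive2} is fixed and each such application is uniform on the stated rectangle.
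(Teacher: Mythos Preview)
Your proposal is correct and follows essentially the same approach as the paper: the paper's proof consists of one sentence invoking the convexity bound to obtain the initial exponent $f=1+j/4$ and then citing the iteration argument from the proof of Conjecture~3.3 in \cite{Young1}, which is precisely the fixed-point analysis of $T_1,T_2,T_3$ that you have written out explicitly. Your added details on the contraction properties and convergence rates of the three maps are accurate (in particular the parabolic nature of $T_2$ at $f=\tfrac12$ and the geometric rate $T_3'(\tfrac34)=\tfrac23$), and simply make explicit what the paper leaves to the reference.
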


   In Section \ref{sec: pfrecursive}, we shall prove Theorem \ref{theo:recursive2} by assuming that each $\alpha, \beta, \gamma$ lies in a punctured rectangle of the form $|\Re(s)| \leq c_1/\log{X}$, $|\Im(s)| \leq c_2 X^{\varepsilon}$ minus $|\Re(s)| \leq c_1/(2\log{X})$, $|\Im(s)| \leq (c_2/2) X^{\varepsilon}$ for suitable
$c_i$ depending on $\alpha, \beta, \gamma$ such that the distances between the parameters are at least $\gg 1/X^{\varepsilon}$.
One then deduces the result for other cases following the arguments made in the paragraph above Section 3.3 in \cite{Young1} and the two paragraphs below Lemma  3.6 in \cite{Young2}.
By considering the limit case of $\alpha, \beta, \gamma \rightarrow 0, l=1$ in Theorem \ref{thm: Malphal1}, we recover the statement of Theorem \ref{theo:mainthm}. Note that we do not run into singularities here, see Lemma 2.3 in \cite{Sono} and the paragraph above it for an explanation.

   Note that the ``twisted'' moments $M_{\alpha}(l), M_{\alpha, \beta}(l)$ and $M_{\alpha, \beta, \gamma}(l)$ appear naturally when mollifying central values. Thus, our result in Theorem \ref{thm: Malphal1} also paves a way for us to consider the mollified moments of the same family of $L$-functions. We shall in fact evaluate the first and second mollified moments of this family in Section \ref{sect: nonvanishing} to establish the following non-vanishing result on central values.
\begin{theorem}
\label{thm: nonvanishing}
  We have for all large $x$, and any fixed $\varepsilon > 0$,
\begin{align*}
 & \sum_{\substack{N(d) \leq x \\ (d, 2)=1 \\ L(\half, \chi_{(1+i)^5d}) \neq 0}}\mu_{[i]}(d)^2 \geq
\lp \frac 78-\varepsilon \rp \sum_{\substack{N(d) \leq x \\ (d, 2)=1 }}\mu_{[i]}(d)^2.
\end{align*}
  Thus, for at least $87.5\%$ of the odd square-free elements $d \in \mathcal{O}_{K}$,  $L(\half,\chi_{(1+i)^5d})\neq 0$.
\end{theorem}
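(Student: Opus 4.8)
The plan is to use the standard mollification method of Soundararajan \cite{sound1}, combined with a Cauchy--Schwarz argument, applied to the family $\{L(\half, \chi_{(1+i)^5d})\}$ over odd square-free $d$. Introduce a mollifier $M(\chi_{(1+i)^5d}) = \sum_{l} \frac{\lambda_l \, \chi_{(1+i)^5d}(l)}{\sqrt{N(l)}}$, where $l$ ranges over primary odd square-free elements with $N(l) \leq L$ for some mollifier length $L = X^{\vartheta}$, and the coefficients $\lambda_l$ are chosen in the Soundararajan shape, i.e.\ essentially a multiplicative function supported on square-free $l$ that interpolates between $\mu_{[i]}(l)$ and $0$ via a smooth cutoff in $\log N(l)/\log L$ (the ``resonator''-type choice that produces the $7/8$ constant). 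By Cauchy--Schwarz applied to the sum over $d$ with a smooth weight $\Phi(N(d)/X)$,
\begin{align*}
\Bigg( \sumstar_{(d,2)=1} L(\tfrac12, \chi_{(1+i)^5d}) M(\chi_{(1+i)^5d}) \Phi\Big(\tfrac{N(d)}{X}\Big) \Bigg)^2
\leq
\Bigg( \sumstar_{\substack{(d,2)=1 \\ L \neq 0}} 1 \cdot \mu_{[i]}(d)^2 \, \Phi\Big(\tfrac{N(d)}{X}\Big) \Bigg)
\Bigg( \sumstar_{(d,2)=1} L(\tfrac12,\chi_{(1+i)^5d})^2 M(\chi_{(1+i)^5d})^2 \Phi\Big(\tfrac{N(d)}{X}\Big) \Bigg),
\end{align*}
so it suffices to obtain an asymptotic lower bound for the first (mollified first) moment and a matching upper bound for the mollified second moment, then remove the smooth weight by a dyadic decomposition.

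The key steps, in order: (1) Expand $M(\chi_{(1+i)^5d})$ and $M(\chi_{(1+i)^5d})^2$ into Dirichlet-polynomial form, so that the mollified first moment becomes $\sum_l \frac{\lambda_l}{\sqrt{N(l)}} M_0(l)$ and the mollified second moment becomes $\sum_{l_1,l_2} \frac{\lambda_{l_1}\lambda_{l_2}}{\sqrt{N(l_1 l_2)}} M_{0,0}(l_1 l_2)$, where $M_0, M_{0,0}$ are the $\alpha,\beta \to 0$ limits of the twisted moments $M_\alpha(l), M_{\alpha,\beta}(l)$ from \eqref{Malphal}. (2) Insert the asymptotic formulas from Theorem \ref{thm: Malphal1}: the error terms contribute $O(X^{1/2+\varepsilon} \sum_l N(l)^{1/2+\varepsilon} N(l)^{-1/2}) = O(X^{1/2+\varepsilon} L^{1+\varepsilon})$ for the first moment and $O(X^{1/2+\varepsilon} L^{2+\varepsilon})$ for the second (using $N(l^*) \leq N(l)$), so we need $\vartheta < 1/2$ (resp.\ $\vartheta < 1/4$), which is harmless for reaching the $7/8$ constant since that is already attained in the classical case with any positive $\vartheta$. (3) For the main terms, differentiate the polynomials $P_i(\log X)$ / the $B$-factors in $\alpha,\beta$ appropriately — more precisely, use the $\widehat F(1-\delta_1\alpha-\delta_2\beta)$ and $\Gamma_\alpha^{\delta_1}$ structure to write the dual (swapped) terms as contour integrals and take residues at $\alpha = 0$, $\beta = 0$, $\alpha+\beta = 0$ — to reduce the arithmetic sums over $l$ (resp.\ $l_1,l_2$) to sums of multiplicative functions, which are then evaluated by the usual Euler-product / Perron-type analysis against the smooth coefficient profile of $\lambda_l$. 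This is where the combinatorial optimization over the shape of $\lambda_l$ enters and yields, with the best choice, mollified first moment $\sim c_1 X \log L$ and mollified second moment $\sim c_2 X \log L$ with $c_1^2/c_2 \to 7/8$ as $\vartheta$ is optimized; this computation is formally identical to \cite{sound1} once the archimedean factor $\Gamma_\alpha$, the factor $32/\pi^2$ in \eqref{gamma}, and the constants $\pi / (2\zeta_{K,2}(2))$ are tracked through.

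The main obstacle I anticipate is step (3) for the mollified second moment: one must handle the ``diagonal'' contribution where $l_1 l_2 = l^* n^2$ forces near-equality of $l_1$ and $l_2$ after extracting square parts, and simultaneously show that the off-diagonal (the swapped terms coming from the functional-equation dual in Theorem \ref{thm: Malphal1}) contributes a genuinely lower-order amount when $\vartheta$ is small enough — in the classical setting this is exactly the step that restricts the mollifier length and fixes the $87.5\%$ figure. Concretely, the arithmetic factor $A_{0,0}(l_1 l_2)$ involves $\sigma_{0,0}((l_1 l_2)^* n^2)$ summed against local factors $\prod_{\varpi \mid n l_1 l_2}(1+N(\varpi)^{-1})^{-1}$, and one has to separate its multiplicative structure at primes dividing $\gcd(l_1,l_2)$ versus the coprime part; the residue computation then produces a quadratic form in the $\lambda$'s whose diagonalization is the crux. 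A secondary technical point is the passage from the smoothed sum $\Phi(N(d)/X)$ to the sharp cutoff $N(d) \leq x$ in the statement, which is routine: choose $\Phi$ to majorize/minorize $\mathbf 1_{[0,1]}$, apply the smoothed result at scales $X = x(1\pm \eta)$, and let $\eta \to 0$ after the main terms (which are $\gg x \log x$) dominate the resulting $O(\eta \, x \log x)$ discrepancy. I would also need the denominator normalization $\sumstar_{(d,2)=1} \mu_{[i]}(d)^2 \Phi(N(d)/X) \sim c\, X$, which follows from a standard lattice-point count in $\mathcal O_K$ with the square-free condition detected by Möbius over ideals.
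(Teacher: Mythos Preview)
Your overall strategy is right, but there is a genuine gap in step~(2) that would prevent you from reaching the constant $7/8$. You propose to feed Theorem~\ref{thm: Malphal1} directly into the mollified \emph{second} moment and accept the resulting restriction $\vartheta<1/4$ on the mollifier length $L=X^\vartheta$, asserting this is ``harmless for reaching the $7/8$ constant since that is already attained in the classical case with any positive $\vartheta$.'' This assertion is false: in Soundararajan's method the proportion obtained is
\[
1-\frac{1}{(\theta+1)^3},
\]
where the mollifier length is $X^{\theta/2}$ (so your $\vartheta=\theta/2$). The value $7/8$ is reached only as $\theta\to 1^-$, i.e.\ with mollifier length close to $X^{1/2}$. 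Your restriction $\vartheta<1/4$ corresponds to $\theta<1/2$, which yields at best $1-8/27\approx 70.4\%$, not $87.5\%$.

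The paper confronts exactly this obstacle and explicitly does \emph{not} use Theorem~\ref{thm: Malphal1} for the second mollified moment, precisely because the $N(l)^{1/2+\varepsilon}$ dependence in its error term is too weak in the $l$-aspect. Instead it reverts to Soundararajan's original direct evaluation of the second mollified moment (detecting square-freeness via $\mu_{[i]}^2=M_Y+R_Y$, diagonalizing the quadratic form in $\lambda$ through the change of variables to $\xi(\gamma)$, and carrying out the resulting arithmetic sums), which supports mollifier length up to $X^{1/2-\varepsilon}$ and hence $\theta\to 1$. Your first-moment treatment via Theorem~\ref{thm: Malphal1} is fine and matches the paper; it is the second-moment input that you need to replace by the direct Soundararajan computation (the analogue over $\mathbb{Z}[i]$ of \cite[\S6]{sound1}) to recover the full $7/8$.
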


   Our proof of Theorem \ref{theo:recursive2} follows largely the line of treatment of Young in \cites{Young1, Young2}, as well as the approach of Sono in \cite{Sono} for the evaluation of $M_{\alpha, \beta}(l)$. We shall apply the approximate functional equation for
$L(s, \chi_{(1+i)^5d})$ obtained in Section \ref{sect: apprfcneqn} to express products involving $L(\half+\alpha, \chi_{(1+i)^5d})$ into two smoothed sums. Then we apply a two dimensional Poisson summation to convert the sum over $d$ into a dual sum. Shifting the contour of integrals leads to a contribution of poles, which in turn gives us two types of main terms, with the second type being contributed by non-zero squares in the dual sum.  On the new line of the integration, we apply the recursive argument to obtain ``tails" of these main terms, so that some of them combine naturally together. This leads to the main terms given in \eqref{eq:Malphal}-\eqref{eq:Malpha3} with desired error terms of smaller sizes.
The most intricate part of the above approach involves with representing the second type main terms so that they can be combined with certain terms coming from the recursive process. This requires a careful analysis on the Archimedean parts of functional equations of the corresponding  $L$-functions as well as the two dimensional Fourier transforms of the weight functions involved.

   On the other hand, our proof of Theorem \ref{thm: nonvanishing} owes much to the work of Soundararajan in \cite{sound1}. In fact, the error term in the asymptotic expression for $M_{\alpha, \beta}(l)$ given in Theorem \ref{thm: Malphal1} is not strong enough in the $l$ aspect for us to choose a mollifier that is long enough to derive our result. We need thus to follow the original treatment of Soundararajan in  \cite{sound1} to handle the second mollified moment. The proof of Theorem \ref{thm: nonvanishing} is then made much easier, thanks to the existing approach available in \cite{sound1}.

\section{Preliminaries}
\label{sec 2}

In this section, we include some auxiliary results needed in the proofs of our theorems.

\subsection{Quadratic residue symbol, Gauss sum and Poisson Summation}
\label{sec2.4}
   Recall that $K=\mq(i)$ and it is well-known that $K$ have class number one.
We denote $(D_K)$ for the discriminant of $K$ and recall that $D_{K}=-4$.
For $n \in \mathcal{O}_{K}, (n,2)=1$, we denote the symbol  $\leg {\cdot}{n}$ for the quadratic residue symbol $\pmod n$ in $K$. For a prime $\varpi \in \mz[i]$
with $N(\varpi) \neq 2$, the quadratic symbol is defined for $a \in
\mathcal{O}_{K}$, $(a, \varpi)=1$ by $\leg{a}{\varpi} \equiv
a^{(N(\varpi)-1)/2} \pmod{\varpi}$, with $\leg{a}{\varpi} \in \{
\pm 1 \}$. When $\varpi | a$, we define
$\leg{a}{\varpi} =0$.  Then the quadratic symbol  can be extended
to any composite $n$ with $(N(n), 2)=1$ multiplicatively. We further define $\leg {\cdot}{c}=1$ when $c \in U_K$.

  For any $n, r\in \mathcal{O}_K$, $(n,2)=1$, we define the quadratic Gauss sum $g(r, n)$  by
\begin{align}
\label{g2}
 g(r,n) = \sum_{x \bmod{n}} \leg{x}{n} \widetilde{e}\leg{rx}{n},
\end{align}
  where
\begin{align*}
 \widetilde{e}(z) =\exp \left( 2\pi i  \left( \frac {z}{2i} - \frac {\bar{z}}{2i} \right) \right) .
\end{align*}

  Let $\varphi_{[i]}(n)$ denote the number of elements in the reduced residue class of $\mathcal{O}_K/(n)$,
we now recall from \cite[Lemma 2.2]{G&Zhao4} some explicitly evaluations of $g(r,n)$ for $n$ being primary.
\begin{lemma} \label{Gausssum}
\begin{enumerate}[(i)]
\item  We have
\begin{align*}
 g(rs,n) & = \overline{\leg{s}{n}} g(r,n), \qquad (s,n)=1, \\
   g(k,mn) & = g(k,m)g(k,n),   \qquad  m,n \text{ primary and } (m , n)=1 .
\end{align*}
\item Let $\varpi$ be a primary prime in $\mathcal{O}_K$. Suppose $\varpi^{h}$ is the largest power of $\varpi$ dividing $k$. (If $k = 0$ then set $h = \infty$.) Then for $l \geq 1$,
\begin{align*}
g(k, \varpi^l)& =\begin{cases}
    0 \qquad & \text{if} \qquad l \leq h \qquad \text{is odd},\\
    \varphi_{[i]}(\varpi^l) \qquad & \text{if} \qquad l \leq h \qquad \text{is even},\\
    -N(\varpi)^{l-1} & \text{if} \qquad l= h+1 \qquad \text{is even},\\
    \leg {ik\varpi^{-h}}{\varpi}N(\varpi)^{l-1/2} \qquad & \text{if} \qquad l= h+1 \qquad \text{is odd},\\
    0, \qquad & \text{if} \qquad l \geq h+2.
\end{cases}
\end{align*}
\end{enumerate}
\end{lemma}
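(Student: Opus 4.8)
\emph{Plan of proof.} The lemma falls into two essentially independent parts. Part~(i) is purely formal, and I would obtain it from elementary substitutions in the defining sum \eqref{g2}, the multiplicativity of the quadratic residue symbol, and one genuinely arithmetic input, the quadratic reciprocity law for primary elements of $\mathcal{O}_K$. Part~(ii) is the Gaussian-field analogue of the classical evaluation of quadratic Gauss sums to prime-power moduli; the plan is to stratify the residues modulo $\varpi^l$ so as to reduce everything to two base ingredients, complete additive-character sums modulo powers of $\varpi$ (which vanish unless the modulus is trivial) and the genuine Gauss sum $g(1,\varpi)$ modulo $\varpi$ --- the exact value of the latter being the one delicate point.

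\emph{Part (i).} For the first identity I would choose $\bar s\in\mathcal{O}_K$ with $s\bar s\equiv1\pmod n$ and substitute $x\mapsto\bar s x$ in \eqref{g2}. Since $(s,n)=1$ this permutes the residues mod $n$; using $\leg{\bar s}{n}=\leg{s}{n}^{-1}=\overline{\leg{s}{n}}$ (the symbol being $\pm1$-valued on units) and $\widetilde{e}(rs\bar s x/n)=\widetilde{e}(rx/n)$ --- which holds because $(s\bar s-1)/n\in\mathcal{O}_K$ while $\widetilde{e}(z)=\exp(2\pi i\,\Im z)$ is additive and trivial on $\mathcal{O}_K$ --- one gets $g(rs,n)=\overline{\leg{s}{n}}\,g(r,n)$. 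For the second identity I would write a residue $x$ mod $mn$, via the Chinese Remainder Theorem, as $x\equiv x_1 n\bar n+x_2 m\bar m\pmod{mn}$, where $n\bar n\equiv1\pmod m$, $m\bar m\equiv1\pmod n$, and $x_1,x_2$ run over residues mod $m$ and mod $n$. Then $\leg{x}{mn}=\leg{x}{m}\leg{x}{n}=\leg{x_1}{m}\leg{x_2}{n}$ and $\widetilde{e}(kx/mn)$ factors by additivity, which yields $g(k,mn)=g(k\bar n,m)\,g(k\bar m,n)$; applying the first identity together with $\leg{\bar n}{m}=\leg{n}{m}$ and $\leg{\bar m}{n}=\leg{m}{n}$ converts this to $\leg{n}{m}\leg{m}{n}\,g(k,m)g(k,n)$. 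The proof is then finished by the quadratic reciprocity law for primary elements of $\mathcal{O}_K$ (see \cite{BEW}), which gives $\leg{n}{m}=\leg{m}{n}$, so that $\leg{n}{m}\leg{m}{n}=1$. This reciprocity law is the only nontrivial ingredient in part~(i).

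\emph{Part (ii).} Write $k=\varpi^h k'$ with $(k',\varpi)=1$ (and $h=\infty$ if $k=0$). If $h\ge l$, then $\widetilde{e}(kx/\varpi^l)=1$ for every $x$, so $g(k,\varpi^l)=\sum_{x\bmod\varpi^l}\leg{x}{\varpi}^l$, which equals $\varphi_{[i]}(\varpi^l)=N(\varpi)^l-N(\varpi)^{l-1}$ when $l$ is even and $0$ when $l$ is odd; this covers the case $l\le h$. If $h<l$, I would partition the residues mod $\varpi^l$ as $x=x_0+\varpi^{l-h}w$ with $x_0$ running mod $\varpi^{l-h}$ and $w$ mod $\varpi^h$. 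Since $l-h\ge1$, one has $\leg{x}{\varpi}=\leg{x_0}{\varpi}$ and $\widetilde{e}(kx/\varpi^l)=\widetilde{e}(k'x_0/\varpi^{l-h})$, so the $w$-sum is trivial and
\[
g(k,\varpi^l)=N(\varpi)^h\sum_{x_0\bmod\varpi^{l-h}}\leg{x_0}{\varpi}^l\,\widetilde{e}\leg{k'x_0}{\varpi^{l-h}}.
\]
For $l$ even the inner sum is $\sum_{\varpi\nmid x_0}\widetilde{e}(k'x_0/\varpi^{l-h})$: the complete sum over all $x_0$ vanishes (as $l-h\ge1$ and $(k',\varpi)=1$), and the sub-sum over $\varpi\mid x_0$ equals $N(\varpi)^{l-h-1}$ if $l=h+1$ and $0$ if $l\ge h+2$, so the inner sum is $-1$ or $0$ accordingly, giving $g(k,\varpi^l)=-N(\varpi)^{l-1}$ for $l=h+1$ even and $0$ for $l\ge h+2$ even. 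For $l$ odd the inner sum is $S_{l-h}$, where $S_m:=\sum_{x_0\bmod\varpi^m}\leg{x_0}{\varpi}\,\widetilde{e}(k'x_0/\varpi^m)$. When $l-h=1$ this is $g(k',\varpi)=\leg{k'}{\varpi}\,g(1,\varpi)$. When $l-h\ge2$ I claim $S_m=0$: for each $t$ mod $\varpi$, reindexing by $x_0\mapsto x_0(1+\varpi^{m-1}t)$ leaves $S_m$ unchanged while preserving $\leg{x_0}{\varpi}$ (since $m-1\ge1$) and multiplying $\widetilde{e}(k'x_0/\varpi^m)$ by $\widetilde{e}(k'x_0t/\varpi)$; averaging over $t$ and using $\sum_{t\bmod\varpi}\widetilde{e}(k'x_0t/\varpi)=0$ (because $\varpi\nmid k'x_0$) forces $S_m=0$. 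Hence $g(k,\varpi^l)=0$ for $l\ge h+2$ odd and $g(k,\varpi^l)=N(\varpi)^h\leg{k'}{\varpi}\,g(1,\varpi)$ for $l=h+1$ odd.

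\emph{The base case, and the main obstacle.} It remains to evaluate $g(1,\varpi)$ for primary $\varpi$: substituting $k'=k\varpi^{-h}$ into the last line, this would yield $g(k,\varpi^l)=\leg{ik\varpi^{-h}}{\varpi}N(\varpi)^{l-1/2}$ in the case $l=h+1$ odd (and in particular settles the instance $l=1$, $h=0$), provided one knows $g(1,\varpi)=\leg{i}{\varpi}N(\varpi)^{1/2}$. The identity $|g(1,\varpi)|^2=N(\varpi)$ is the routine computation of $g(1,\varpi)\overline{g(1,\varpi)}$ via a change of variables. The exact value --- that $g(1,\varpi)$ is \emph{real}, equal to $N(\varpi)^{1/2}$ or $-N(\varpi)^{1/2}$ according as $\leg{i}{\varpi}$ is $+1$ or $-1$ --- is the Gaussian-field analogue of Gauss's determination of the sign of the quadratic Gauss sum, and this will be the main obstacle; I would either invoke the known evaluation in \cite{BEW} or reproduce Gauss's argument, for instance by Poisson summation on $\mathbb{R}^2\cong\mathbb{C}$ applied to a Gaussian weight. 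Everything else in the proof is bookkeeping organised around this base case and the complete additive-character sums.
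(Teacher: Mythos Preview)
Your proof is correct and complete. Note, however, that the paper itself does not give a proof of this lemma: it simply recalls the statement from \cite[Lemma~2.2]{G&Zhao4}. So there is no argument in the paper to compare against; what you have written is the standard self-contained proof that the cited reference would contain.

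A few brief remarks on your execution. In part~(i), your reduction of the multiplicativity to quadratic reciprocity for primary elements of $\mathcal{O}_K$ is exactly right, and that reciprocity law (found e.g.\ in \cite{BEW}) is indeed the only nontrivial input. In part~(ii), your stratification argument is clean, and your treatment of the vanishing of $S_m$ for $m\ge 2$ via the averaging over $t\mapsto x_0(1+\varpi^{m-1}t)$ is the standard and correct device. You are also right to isolate the evaluation $g(1,\varpi)=\leg{i}{\varpi}N(\varpi)^{1/2}$ as the one genuinely delicate point; this is the Gaussian-field analogue of Gauss's sign determination, and invoking it from the literature (as you propose) is entirely appropriate here, since the paper itself treats the whole lemma as a citation.
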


    We quote the following Poisson summation formula from \cite[Lemma 2.7]{G&Zhao4}.
\begin{lemma}
\label{Poissonsumformodd} Let $n \in \mz[i],   n \equiv 1 \pmod {(1+i)^3}$ and $\leg {\cdot}{n}$ be the quadratic residue symbol $\pmod {n}$. For any Schwartz class function $W$,  we have
\begin{align*}
   \sum_{\substack {m \in \mz[i] \\ (m,1+i)=1}}\leg {m}{n} W\left(\frac {N(m)}{X}\right)=\frac {X}{2N(n)}\leg {1+i}{n}\sum_{k \in
   \mz[i]}(-1)^{N(k)} g(k,n)\widetilde{W}\left(\sqrt{\frac {N(k)X}{2N(n)}}\right),
\end{align*}
  where
\begin{align*}
   \widetilde{W}(t) &=\int\limits^{\infty}_{-\infty}\int\limits^{\infty}_{-\infty}W(N(x+yi))\widetilde{e}\left(- t(x+yi)\right)\dif x \dif y, \quad t \geq 0.
\end{align*}
\end{lemma}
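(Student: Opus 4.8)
The final statement is Lemma~\ref{Poissonsumformodd}, a two-dimensional Poisson summation formula for the quadratic residue symbol $\leg{\cdot}{n}$. Here is how I would prove it.

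\medskip

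\textbf{Plan.} The idea is to decompose the sum over $m\in\mz[i]$ coprime to $1+i$ into residue classes modulo $n$, apply the classical Poisson summation formula on the lattice $\mz[i]\subset\mr^2$, and then recognise the resulting exponential sums over residue classes as quadratic Gauss sums $g(k,n)$. First I would observe that the condition $(m,1+i)=1$ is equivalent to $N(m)$ being odd, i.e.\ $m\not\equiv 0\pmod{1+i}$; since $(n,1+i)=1$, the class of $m$ modulo $(1+i)n$ is what matters, but it is cleaner to detect the coprimality condition with $1+i$ separately. Writing $m=a+bn$ with $a$ running over a set of representatives of $\mathcal O_K/(n)$ and $b$ running over $\mz[i]$, one gets
\begin{align*}
\sum_{\substack{m\in\mz[i]\\(m,1+i)=1}}\leg{m}{n}W\!\left(\frac{N(m)}{X}\right)
=\sum_{\substack{a\bmod n\\(a,1+i)=1}}\leg{a}{n}\sum_{b\in\mz[i]}W\!\left(\frac{N(a+bn)}{X}\right).
\end{align*}
Applying two-dimensional Poisson summation to the inner sum over $b\in\mz[i]$ (identified with $\mz^2$), with the change of variables that rescales by $N(n)$ and $X$, produces a dual sum over $k\in\mz[i]$ whose summand is a Fourier transform of $W(N(\cdot)/X)$ evaluated at a point proportional to $k/\sqrt{N(n)}$ (after the substitution this is exactly $\widetilde W(\sqrt{N(k)X/(2N(n))})$), times an additive character $\widetilde e(ka/n)$ summed over $a$.

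\medskip

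\textbf{Key steps.} (1) Reduce to residue classes mod $n$ and apply Poisson summation on $\mz[i]$; track carefully the Jacobian factors, which yield the $X/(2N(n))$ prefactor and the argument $\sqrt{N(k)X/(2N(n))}$ of $\widetilde W$. (2) Handle the coprimality condition $(a,1+i)=1$: detect it, e.g., by noting $\leg{a}{n}=0$ automatically kills $a$ not coprime to $n$, and for the factor $1+i$ one uses that over $a\bmod (1+i)$ the symbol $\leg{\cdot}{n}$ combined with the character picks out the sign $(-1)^{N(k)}$ and the factor $\leg{1+i}{n}$ — this is where the shift by $1+i$ in the Gauss sum argument and the $(-1)^{N(k)}$ twist come from, via the reciprocity/quadratic behaviour of $\leg{1+i}{n}$ and a splitting of the sum modulo $(1+i)n$ into the part modulo $n$ and the part modulo $1+i$. (3) Identify $\sum_{a\bmod n}\leg{a}{n}\widetilde e(ka/n)=g(k,n)$ directly from the definition \eqref{g2}. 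Assembling (1)–(3) gives the stated formula. Since this lemma is quoted from \cite[Lemma 2.7]{G&Zhao4}, in the write-up it suffices to cite that reference; the sketch above is the argument underlying it.

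\medskip

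\textbf{Main obstacle.} The delicate point is step (2): correctly extracting the factor $\leg{1+i}{n}$ and the sign $(-1)^{N(k)}$ from the interplay between the coprimality constraint at the prime $1+i$ and the additive character. One must expand the sum over $m\in\mz[i]$ with $(m,1+i)=1$ as a sum over residues mod $(1+i)n$ (using $(n,1+i)=1$ and CRT), separate the mod-$(1+i)$ part — which after Poisson produces the $(-1)^{N(k)}$ twist — from the mod-$n$ part, and then use the explicit value of the genus-character-type factor $\leg{1+i}{n}$ for primary $n$; getting all normalisations ($\widetilde e$ uses $z/(2i)-\bar z/(2i)$, the lattice covolume is $1$, etc.) consistent with the statement is the part that requires genuine care rather than routine computation.
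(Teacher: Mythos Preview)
Your proposal is correct and matches the paper's treatment: the paper does not prove this lemma at all but simply quotes it as \cite[Lemma 2.7]{G\&Zhao4}, exactly as you note at the end of your sketch. Your outline of the underlying argument (split into residue classes mod $(1+i)n$, apply two-dimensional Poisson on $\mz[i]$, and identify the Gauss sum) is the standard route and is consistent with how such formulas are derived in the cited reference.
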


\subsection{Evaluation of certain integrals}
\label{sect: Wiestm}
     We will require an evaluation on $\widetilde{W}(t)$ for special choices of $W(t)$. First note that $\widetilde{W}(t) \in \mr$ in general for any $t \geq 0$, since we have
\begin{align}
\label{Wt}
     \widetilde{W}(t)  =\int\limits_{\mr^2}\cos (2\pi t y)W(x^2+y^2) \ \dif x \dif y.
\end{align}

     We evaluate the above integral in polar coordinates to get
\begin{align*}
     \widetilde{W}(t) =& 4\int\limits^{\pi/2}_0\int\limits^{\infty}_0\cos (2\pi t r\sin \theta)W(r^2) \ r \dif r \dif \theta    = 2\int\limits^{\pi/2}_0 \int\limits^{\infty}_0\cos (2\pi t r^{1/2}\sin \theta)W(r) \ \dif r \dif \theta.
\end{align*}

     We now take $\Phi(t)$ as given in Theorem \ref{theo:mainthm}. Fix a positive integer $m$,  we let $G_j(s), 1 \leq j \leq m$ be entire, even functions, bounded in any strip $-A \leq \Re(s) \leq A$ for some $A>2$ such that $G_j(0)=1, 1\leq j \leq m$. We further let $\alpha_j, 1 \leq j \leq m$ be complex numbers and denote $(\alpha_j)$ for the sequence $(\alpha_1, \cdots, \alpha_j)$.  We define further for $t>0$,
\begin{align}
\label{eq:Vdef}
 V_{(\alpha_j)}(t) = \frac{1}{2 \pi i} \int\limits\limits_{(2)} \frac{G_j(s)}{s} g_{(\alpha_j)}(s) t^{-s} ds,
\end{align}
 where $g_{(\alpha_j)}(s) =\prod^j_{i=1} g_{\alpha_i}(s)$ with
\begin{equation*}
 g_{\alpha}(s) = \left(\frac{2^{5/2}}{\pi}\right)^{s}
\frac {\Gamma(\frac{1}{2}+\alpha+s)}{\Gamma(\frac{1}{2}+\alpha)}.
\end{equation*}

  The functions $V_{(\alpha_j)}(t)$ appear naturally in the approximation functional equations involving products of $L(1/2+\alpha_j, \chi_{(1+i)^5d})$ (see Section \ref{sect: apprfcneqn}). In our process, we need to evaluate $\widetilde{F_{n,j}}\left(t\right)$ for $1 \leq j \leq 3$ for a primary $n$,  where
\begin{align}
\label{Fn}
  F_{n,j}(t)=\Phi(t)V_{(\alpha_j)}\left(  \frac {N(n)}{(X t)^{j/2}} \right ).
\end{align}

 To do so, we first note that for any real number $c_u$, we have
\begin{align*}
    \Phi \left(t \right)=\frac 1{2\pi i}\int\limits\limits_{(c_u)}\widehat{\Phi}(u)t^{-u}du.
\end{align*}

   Applying this together with \eqref{eq:Vdef}, we see that for $t>0$,
\begin{align*}
    \widetilde{F_{n,j}}\left(t\right)=
     & 2\int\limits^{\pi/2}_0\int\limits^{\infty}_0\cos (2\pi t r^{1/2}\sin \theta)\frac 1{(2\pi i)^2}\int\limits\limits_{(c_u)}
\int\limits\limits_{(c_s)}\widehat{\Phi}(1+u)r^{-u}
g_{(\alpha_j)}(s) \left(  \frac {N(n)}{ (X r)^{j/2}}  \right )^{-s} du \frac {G_j(s)ds}{s}\frac {\dif r}{r} \dif \theta.
\end{align*}

   We reverse the order of the three inner integrations above to arrive, after some changes of variables (first $r^{1/2} \to r$,
then $2 \pi tr \sin \theta \to r$), at
\begin{align*}
\begin{split}
  \widetilde{F_{n,j}}\left(t\right) =& 4\int\limits^{\pi/2}_0\frac 1{(2\pi i)^2}
\int\limits\limits_{(c_s)}\int\limits\limits_{(c_u)}\widehat{\Phi}(1+u)
g_{(\alpha_j)}(s) \left(  \frac {N(n)}{X^{j/2}}  \right )^{-s}
\int\limits^{\infty}_0\cos (r)\left(\frac r{2\pi t \sin \theta}\right )^{js-2u}du \frac {ds}{s} \frac {\dif r}{r} \dif \theta   \\
     =&
 \frac {4}{(2\pi i)^2}
\int\limits\limits_{(c_s)}\int\limits\limits_{(c_u)}\widehat{\Phi}(1+u)
g_{(\alpha_j)}(s)  \left(  \frac {N(n)}{X^{j/2}}  \right )^{-s}(2\pi t)^{-(js-2u)}\int\limits^{\pi/2}_0 (\sin \theta )^{-(js-2u)} \dif \theta
\int\limits^{\infty}_0\cos (r)r^{js-2u}\frac {\dif r}{r} du \frac {G_j(s)ds}{s}.
\end{split}
\end{align*}

     We note that for $\Re(s)<1$, we have (see \cite[Formula 2, Section 8.380]{GR})
\begin{align}
\label{sinint}
    \int\limits^{\pi/2}_0 (\sin \theta )^{-s} \dif \theta=\frac 12 B(\frac {1-s}{2}, \frac 12)=\frac {\sqrt{\pi}}{2}\frac {\Gamma(\frac {1-s}{2})}{\Gamma(\frac {2-s}{2})},
\end{align}
   where $B(x,y)$ is the Beta function such that when $\Re(x), \Re(y)>0$ (see \cite[Formula 2, Section 8.384]{GR})
\begin{align*}
   B(x, y)=\frac {\Gamma(x)\Gamma(y)}{\Gamma(x+y)},
\end{align*}
   and that (see \cite[Formula 2, Section 8.338]{GR}) $\Gamma(\frac {1}{2})=\sqrt{\pi}$.

  We also note that (see \cite[Formula 9, Section 3.761]{GR}) for any $0<\Re(s)<1$:
\begin{align}
\label{cosMellin}
     \int\limits^{\infty}_0\cos (r)r^{s}\frac {\dif r}{r} =\Gamma (s) \cos \left( \frac {\pi s}{2} \right).
\end{align}

   We now combine \eqref{sinint}, \eqref{cosMellin} and the following relation (see chapter 10 of \cite{Da})
\begin{equation*}
\pi^{-\thalf} 2^{1-u} \cos(\tfrac{\pi}{2} s) \Gamma(s) = \frac{\Gamma \leg{s}{2}}{\Gamma\leg{1-s}{2}}
\end{equation*}
  to see that
\begin{align*}
\int\limits^{\pi/2}_0 (\sin \theta )^{-u} \dif \theta
\int\limits^{\infty}_0\cos (r)r^{u}\frac {\dif r}{r}=\frac {\pi}{2}2^{u-1}\frac{\Gamma \leg{u}{2}}{\Gamma\leg{2-u}{2}}.
\end{align*}
  This implies that
\begin{align*}
\int\limits^{\pi/2}_0 (\sin \theta )^{-(js-2u)} \dif \theta
\int\limits^{\infty}_0\cos (r)r^{js-2u}\frac {\dif r}{r}=\frac {\pi}{2}2^{s-2u-1}\frac{\Gamma \leg{js-2u}{2}}{\Gamma\leg{2-js+2u}{2}}.
\end{align*}

   We then conclude that
\begin{align}
\label{Fnexprssion}
\begin{split}
  \widetilde{F_{n,j}}\left(t\right) =&
 \frac {\pi}{(2\pi i)^2}
\int\limits\limits_{(c_s)}\int\limits\limits_{(c_u)}\widehat{\Phi}(1+u)
g_{(\alpha_j)}(s)  \left(  \frac {N(n)}{X^{j/2}}  \right )^{-s}(\pi t)^{-(js-2u)}\frac{\Gamma \leg{js-2u}{2}}{\Gamma\leg{2-js+2u}{2}} du \frac {G_j(s)ds}{s}.
\end{split}
\end{align}

   Lastly, for any primary $l$ and $n$, we evaluate $\widetilde{F_{n^2l,j }}\left(0\right)$ by applying \eqref{Wt} directly to see that
\begin{align}
\label{F0}
\begin{split}
  \widetilde{F_{n^2l,j}}\left(0\right) =& \int\limits^{\infty}_{-\infty}\int\limits^{\infty}_{-\infty}\Phi \left(N(x+yi) \right)
V_{(\alpha_j)}  \left(  \frac {N(ln^2)}{(X N(x+yi))^{j/2}} \right ) \dif x \dif y \\
=&  \frac {1}{2\pi
   i} \int\limits\limits_{(2)}g_{(\alpha_j)}(s)\left(  \frac {X^{j/2} } {N(ln^2)}\right )^{s}
 \left (\int\limits^{\infty}_{-\infty}\int\limits^{\infty}_{-\infty}\Phi \left(N(x+yi) \right) N(x+yi)^{js/2}
\dif x \dif y \right )  \frac { G_j(s) ds}{s} \\
=& \frac {\pi }{2\pi
   i} \int\limits\limits_{(2)} g_{(\alpha_j)}(s)\left(  \frac {X^{j/2} } {N(ln^2)}\right )^{s}
 \widehat{\Phi}(1+\frac {js}2) \frac {G_j(s)ds}{s},
\end{split}
\end{align}
  since we have
\begin{align*}
  \int\limits^{\infty}_{-\infty}\int\limits^{\infty}_{-\infty}\Phi \left(N(x+yi) \right) N(x+yi)^{js/2}
\dif x \dif y =\int^{2\pi}_0\int^{\infty}_0\Phi (r^2)r^{js}rdrd\theta =\pi \widehat{\Phi}(1+\frac {js}2).
\end{align*}

\subsection{The approximate functional equation}
\label{sect: apprfcneqn}
Let $\chi$ be a primitive quadratic Hecke character $\pmod {m}$ of trivial infinite type defined on $\mathcal{O}_K$.
As shown by E. Hecke, $L(s, \chi)$ admits
analytic continuation to an entire function and satisfies the
functional equation (\cite[Theorem 3.8]{iwakow})
\begin{align}
\label{fneqn}
  \Lambda(s, \chi) = W(\chi)(N(m))^{-1/2}\Lambda(1-s, \overline{\chi}),
\end{align}
   where $|W(\chi)|=(N(m))^{1/2}$ and
\begin{align*}
  \Lambda(s, \chi) = (|D_K|N(m))^{s/2}(2\pi)^{-s}\Gamma(s)L(s, \chi).
\end{align*}

   As $\chi$ is quadratic, we have $\chi=\overline{\chi}$ so that by setting $s=1/2$ in \eqref{fneqn}, we deduce that
\begin{align*}
  W(\chi)=N(m)^{1/2}.
\end{align*}

   Thus, the functional equation in this case becomes
\begin{align}
\label{fneqnquad}
  \Lambda(s, \chi) = \Lambda(1-s, \chi).
\end{align}

  Let $s_j, 1 \leq j \leq n$ be complex numbers for some positive integer $n$. Write ${\bf s}=(s_1, \cdots, s_n)$ and $1-{\bf s}=(1-s_1, \cdots, 1-s_n)$.  Let $G_n(s)$ be an entire, even function, bounded in any strip $-A \leq \Re(s) \leq A$ for some $A>2$ such that $G_n(0)=1$. For some $c >1$, consider the integral
\begin{align*}
   I({\bf s}, \chi)=\frac 1{2 \pi i}\int\limits_{(c)}\prod^n_{j=1}\Lambda(s_j+u, \chi)G_n(u) \frac {\dif u}{u}.
\end{align*}
    Moving the contour of integral to $\Re(u)=-c$, we see that
\begin{equation*}
   \prod^n_{j=1}\Lambda(s_j, \chi)=I({\bf s}, \chi)-
\frac 1{2 \pi i}\int\limits_{(-c)}\prod^n_{j=1}\Lambda(s_j+u, \chi)G_n(u) \frac {\dif u}{u}.
\end{equation*}
   We now apply the functional equation \eqref{fneqnquad} to obtain
\begin{align}
\label{Lambda}
\begin{split}
    \prod^n_{j=1}\Lambda(s_j, \chi)= & I({\bf s}, \chi)
-\frac 1{2 \pi i}\int\limits_{(-c)}\prod^n_{j=1}\Lambda(1-s_j+u, \chi)G_n(u) \frac {\dif u}{u} \\
=& I({\bf s}, \chi)+ \frac 1{2 \pi i}\int\limits_{(-c)}\prod^n_{j=1}\Lambda(1-s_j+u, \chi)G_n(u) \frac {\dif u}{u} \\
=& I({\bf s}, \chi)+I(1-{\bf s}, \chi),
\end{split}
\end{align}
  where the second equality follows from a change of variable $u \rightarrow -u$ in the first integral above.

   Upon expanding $\Lambda(s_i+u), 1 \leq i \leq n$ into convergent Dirichlet series, we have
\begin{align*}
 I({\bf s}, \chi)=&\frac 1{2 \pi i}\int\limits_{(c)} \left ( \sum_{0 \neq \mathcal{A}_1, \cdots, \mathcal{A}_n \subset \mathcal{O}_K} \prod^n_{j=1} \frac{\chi(\mathcal{A}_j)}{N(\mathcal{A}_j)^{s_j+u}}\frac{(|D_K|N(m))^{(u+s_j)/2}\Gamma(s_j+u)}{(2\pi)^{u+s_j}}
 \right )
 G_n(u) \frac {\dif u}{u}, \\
I(1-{\bf s}, \chi)=&
\frac 1{2 \pi i}\int\limits_{(c)} \left ( \sum_{0 \neq \mathcal{A}_1, \cdots, \mathcal{A}_n \subset \mathcal{O}_K} \prod^n_{j=1} \frac{\chi(\mathcal{A}_j)}{N(\mathcal{A}_j)^{1-s_j+u}}\frac{(|D_K|N(m))^{(1-s_j+u)/2}\Gamma(1-s_j+u)}{(2\pi)^{(1-s_j+u)}}
 \right )
 G_n(u) \frac {\dif u}{u}.
\end{align*}

   Applying these expressions and dividing through $\prod^n_{j=1}(|D_K|N(m))^{s_j/2}(2\pi)^{-s_j}\Gamma(s_j)$ on both sides of \eqref{Lambda},
we obtain
\begin{align*}
 \prod^n_{j=1}L(s_j,\chi)=& \frac 1{2 \pi i}\int\limits_{(c)} \left ( \sum_{0 \neq \mathcal{A}_1, \cdots, \mathcal{A}_n \subset \mathcal{O}_K} \prod^n_{j=1} \frac{\chi(\mathcal{A}_j)}{N(\mathcal{A}_j)^{s_j+u}}\frac{(|D_K|N(m))^{u/2}\Gamma(s_j+u)}{(2\pi)^{u}\Gamma(s_j)}
 \right )
 G_n(u) \frac {\dif u}{u} \\
 & + \frac 1{2 \pi i}\int\limits_{(c)} \left ( \sum_{0 \neq \mathcal{A}_1, \cdots, \mathcal{A}_n \subset \mathcal{O}_K} \prod^n_{j=1} \frac{\chi(\mathcal{A}_j)}{N(\mathcal{A}_j)^{1-s_j+u}}\frac{(|D_K|N(m))^{(1-2s_j+u)/2}\Gamma(1-s_j+u)}{(2\pi)^{(1-2s_j+u)}\Gamma(s_j)}
 \right )
 G_n(u) \frac {\dif u}{u}.
\end{align*}

   Recalling that $D_K=-4$, we then deduce from the above by setting $s_j=\frac 12+\alpha_j$, $\chi=\chi_{(1+i)^5d}$ for $d$ odd and square-free that
\begin{align*}
 & \prod^n_{j=1}L(\half+\alpha_j,\chi) \\
=& \sum_{0 \neq \mathcal{A} \subset \mathcal{O}_K} \frac{\chi(\mathcal{A})
\sigma_{(\alpha_n)}(\mathcal{A})}{N(\mathcal{A})^{1/2}}V_{(\alpha_n)} \left(  \frac {N(\mathcal{A})}{N(d)^{n/2}} \right)+N(d)^{- \sum^n_{j=1}\alpha_j}\prod^n_{j=1}\Gamma_{\alpha_j}\sum_{0 \neq \mathcal{A} \subset \mathcal{O}_K} \frac{\chi(\mathcal{A})
\sigma_{-(\alpha_n)}(\mathcal{A})}{N(\mathcal{A})^{1/2}}V_{-(\alpha_n)} \left(  \frac {N(\mathcal{A})}{N(d)^{n/2}} \right).
\end{align*}
   where $\Gamma_{\alpha}$ is defined in \eqref{gamma}, $V_{(\alpha_n)}$ is defined in \eqref{eq:Vdef} and
\begin{align*}
 \sigma_{(\alpha_n)}(\mathcal{A})= \sum_{ \prod^n_{j=1}\mathcal{A}_j=\mathcal{A} }N(\mathcal{A}_j)^{-\alpha_j}.
\end{align*}

  As $\chi_{(1+i)^5d}(\mathcal{A}) \neq 0$ only when $(\mathcal{A}, 2)=1$, in which case we may replace $\mathcal{A}$ by its primary generator. We thus deduce from the above discussions the following approximate functional equation for products of quadratic Hecke $L$-functions.
\begin{lemma}[Approximate functional equation]
\label{lem:AFE}
Let $G_j(s), 1 \leq j \leq 3$ be entire, even functions with rapid decay in the strip $|\Re(s)| \leq 10$ such that $G_j(0)=1, 1 \leq j \leq 3$.  For $\chi_{(1+i)^5d}$ as above, we have
\begin{align}
\label{fcneqnL}
\begin{split}
 \prod^j_{i=1}L(\thalf + \alpha_j, \chi_{(1+i)^5d}) = & \sum_{\substack{n \equiv 1 \bmod {(1+i)^3}}} \frac{\chi_{(1+i)^5d}(n)\sigma_{\alpha_1, \cdots, \alpha_j}(n)}{N(n)^{\frac{1}{2}+\alpha}} V_{(\alpha_j)}
\left(\frac{ N(n)}{N(d)^{j/2}} \right) \\
& + \displaystyle N(d)^{-\sum^j_{i=1}\alpha_i}\Gamma_{\alpha_1, \cdots, \alpha_j}  \sum_{\substack{n \equiv 1 \bmod {(1+i)^3}}} \frac{\chi_{(1+i)^5d}(n)\sigma_{-\alpha_1, \cdots, -\alpha_j}(n)}{N(n)^{\frac{1}{2}-\alpha}}
V_{-(\alpha_j)} \left(\frac{ N(n)}{N(d)^{j/2}} \right),
\end{split}
\end{align}
where $\Gamma_{\alpha_1, \cdots, \alpha_j}= \displaystyle \prod^j_{i=1}\Gamma_{\alpha_i}(s)$ and $\sigma_{\alpha_1, \cdots, \alpha_j}(n)$ is defined in
\eqref{eq:sigma}.
\end{lemma}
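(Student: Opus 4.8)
The statement to prove is the approximate functional equation of Lemma~\ref{lem:AFE}, and in fact the excerpt already carries out essentially all of the derivation in the text preceding the lemma; the task is to assemble these ingredients into a clean proof. The plan is therefore to follow the displayed computation verbatim but organize it as a self-contained argument. First I would fix $\chi=\chi_{(1+i)^5d}$ with $d$ odd and square-free, recall from Section~\ref{sect: apprfcneqn} (via \cite[Theorem~3.8]{iwakow}) that $L(s,\chi)$ is entire and, since $\chi$ is quadratic of trivial infinite type, satisfies the self-dual functional equation $\Lambda(s,\chi)=\Lambda(1-s,\chi)$ with $\Lambda(s,\chi)=(|D_K|N((1+i)^5d))^{s/2}(2\pi)^{-s}\Gamma(s)L(s,\chi)$ and $N((1+i)^5d)=32N(d)$, $|D_K|=4$; so the archimedean conductor-type factor is $(128 N(d))^{s/2}$, consistent with the constant $2^{5/2}/\pi$ appearing in $g_\alpha$ and with $32/\pi^2$ in $\Gamma_\alpha$ of \eqref{gamma}.

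Next I would introduce, for $s_j=\tfrac12+\alpha_j$ and $G_j$ entire, even, with rapid decay in $|\Re(s)|\leq 10$ and $G_j(0)=1$, the integral $I(\mathbf{s},\chi)=\frac{1}{2\pi i}\int_{(c)}\prod_{i=1}^{j}\Lambda(s_i+u,\chi)\,G_j(u)\,\frac{\dif u}{u}$ for some $c>1$. Shifting the contour to $\Re(u)=-c$ picks up the simple pole at $u=0$ with residue $\prod_{i=1}^{j}\Lambda(s_i,\chi)$ (the only pole, since each $\Lambda(\cdot,\chi)$ is entire and $G_j$ is entire), and then applying $\Lambda(s_i+u,\chi)=\Lambda(1-s_i-u,\chi)$ followed by $u\mapsto -u$ converts the remaining integral into $I(1-\mathbf{s},\chi)$, giving $\prod_{i=1}^{j}\Lambda(s_i,\chi)=I(\mathbf{s},\chi)+I(1-\mathbf{s},\chi)$ exactly as in \eqref{Lambda}. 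I would then expand each $\Lambda(s_i+u,\chi)$ and $\Lambda(1-s_i+u,\chi)$ into absolutely convergent Dirichlet series over nonzero integral ideals $\mathcal{A}_i\subset\mathcal{O}_K$ (valid on $\Re(u)=c>1$), divide both sides by $\prod_{i=1}^{j}(|D_K|N((1+i)^5d))^{s_i/2}(2\pi)^{-s_i}\Gamma(s_i)$, and collect terms. Writing $\mathcal{A}=\prod_i\mathcal{A}_i$ and recognizing the inner $u$-integral as $V_{(\alpha_j)}$ from \eqref{eq:Vdef} (here is where one checks that the ratio $\Gamma(\tfrac12+\alpha_i+u)/\Gamma(\tfrac12+\alpha_i)$ times the power of $2^{5/2}/\pi$ assembles precisely into $g_{\alpha_i}(s)$ after the change of variable $u\to s$, and that $N(d)^{n/2}$ emerges from the conductor factors), the first sum becomes $\sum_{0\neq\mathcal{A}}\chi(\mathcal{A})\sigma_{(\alpha_j)}(\mathcal{A})N(\mathcal{A})^{-1/2}V_{(\alpha_j)}(N(\mathcal{A})/N(d)^{j/2})$, and the dual sum acquires the extra factor $N(d)^{-\sum\alpha_i}\prod_i\Gamma_{\alpha_i}$ with $-\alpha_i$ in place of $\alpha_i$ throughout, matching $\Gamma_{\alpha}$ of \eqref{gamma}.

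Finally, since $\chi_{(1+i)^5d}(\mathcal{A})=0$ unless $(\mathcal{A},2)=1$, in which case $\mathcal{A}$ has a unique primary generator $n\equiv 1\bmod (1+i)^3$, I would pass from the sum over ideals to a sum over primary $n$, noting that $\sigma_{(\alpha_j)}(\mathcal{A})$ rewritten in terms of primary generators is exactly $\sigma_{\alpha_1,\dots,\alpha_j}(n)$ as defined in \eqref{eq:sigma}; this yields \eqref{fcneqnL} and completes the proof. The only genuinely delicate point is the bookkeeping in the middle step: one must verify that all the archimedean factors, powers of $2$ and $\pi$, and the $\Gamma$-ratios combine correctly so that the two $V$-functions and the factor $\Gamma_{\alpha_1,\dots,\alpha_j}$ come out in the stated normalization. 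I expect the main obstacle to be purely this constant-chasing — making sure $|D_K|N((1+i)^5d)=128N(d)$ is threaded consistently through every occurrence, since an error there would shift the arguments of $g_\alpha$ or $\Gamma_\alpha$; everything else (contour shifts, pole extraction, absolute convergence on $\Re(u)=c$) is routine and already spelled out in the excerpt.
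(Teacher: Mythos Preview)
Your proposal is correct and follows essentially the same approach as the paper: the derivation in Section~\ref{sect: apprfcneqn} preceding the lemma is precisely the contour-shift argument you describe, splitting $\prod_i \Lambda(s_i,\chi)$ into $I(\mathbf{s},\chi)+I(1-\mathbf{s},\chi)$ via the self-dual functional equation \eqref{fneqnquad}, expanding into Dirichlet series, dividing out the archimedean factors, and then passing to primary generators. The paper likewise does not spell out the constant-chasing you flag as the delicate point, so your emphasis there is appropriate.
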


\subsection{Analytical behaviors of certain Dirichlet series}
\label{sect: alybehv}

   In this section, we discuss the analytical behaviors of certain Dirichlet series that are needed in our proofs. The first result concerns the analytical behaviors of $A_{\alpha, \beta}(l)$ and $A_{\alpha, \beta, \gamma}(l)$ given in Theorem \ref{theo:recursive2}.
\begin{lemma}
 \label{lemma:A}
 Let $l = l_1 l_2$ and let $A_{\alpha, \beta}(l)$, $A_{\alpha, \beta, \gamma}(l)$ be as in Theorem \ref{theo:recursive2}.  Then both $A_{\alpha, \beta}(l)$ and $A_{\alpha, \beta, \gamma}(l)$ have meromorphic
continuations to $\Re(\alpha), \Re(\beta), \Re(\gamma) > -\half$.  In fact, for any positive integer $M \geq 2$ there exist integers $d_{a,b}, d_{a,b,c}$ (possibly negative or zero) such that
\begin{align}
\label{eq:Aproduct}
\begin{split}
  A_{\alpha,\beta}(l) =& C_{\alpha,\beta}(l)\prod_{\substack{1 \leq a + b\leq M-1 \\ a, b, c \geq 0}} \zeta_K(a+b+ 2a\alpha + 2b\beta)^{d_{a,b}}, \\
 A_{\alpha,\beta,\gamma}(l) =& C_{\alpha,\beta,\gamma}(l)\prod_{\substack{1 \leq a + b + c \leq M-1 \\ a, b, c \geq 0}} \zeta_K(a+b+c + 2a\alpha + 2b\beta + 2c\gamma)^{d_{a,b,c}},
\end{split}
\end{align}
where for any $\delta > 0$, $C_{\alpha,\beta}(l), C_{\alpha,\beta,\gamma}(l)$ are given by absolutely convergent Euler products in the region
$\Re(\alpha), \Re(\beta), \Re(\gamma) > \frac{1-M}{2M} + \delta$.  Moreover, in this region $C_{\alpha,\beta}(l), C_{\alpha, \beta, \gamma}(l)$ satisfy the bound
\begin{equation*}
 C_{\alpha,\beta}(l), \quad C_{\alpha,\beta,\gamma}(l) \ll \sqrt{N(l_1)} N(l)^{\varepsilon}.
\end{equation*}
\end{lemma}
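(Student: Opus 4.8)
The plan is to analyze $A_{\alpha,\beta}(l)$ (and $A_{\alpha,\beta,\gamma}(l)$ similarly) by writing it as an Euler product and extracting zeta-factors one generalized $\zeta_K$ at a time. Recall that
\[
 A_{\alpha, \beta}(l) = \sum_{\substack{n \equiv 1 \bmod {(1+i)^3} \\ (n,2)=1}} \frac{\sigma_{\alpha,\beta}(l^* n^2)}{N(n)}
\prod_{\substack{\varpi \equiv 1 \bmod {(1+i)^3} \\ \varpi | n l}} (1 + N(\varpi)^{-1})^{-1}.
\]
First I would observe that since $\sigma_{\alpha,\beta}$ is multiplicative and the weight depending on $n$ and $l$ is multiplicative over primes, the whole sum factors as an Euler product over primary primes $\varpi \nmid 2$. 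For a prime $\varpi \nmid 2l$, the local factor is a power series in $N(\varpi)^{-1}$ whose leading behaviour I would compute: the $n$-part contributes $\sum_{k\ge 0} \sigma_{\alpha,\beta}(\varpi^{2k}) N(\varpi)^{-k}(1+N(\varpi)^{-1})^{-\mathbf{1}[k\ge 1]}$, and using $\sigma_{\alpha,\beta}(\varpi^{2k}) = \sum_{a+b=2k} N(\varpi)^{-a\alpha-b\beta}$ one sees the leading term matches $(1-N(\varpi)^{-1-2\alpha})^{-1}(1-N(\varpi)^{-1-2\beta})^{-1}(1-N(\varpi)^{-1-\alpha-\beta})^{-1}$ up to a factor $1 + O(N(\varpi)^{-2+\text{(small)}})$. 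This is exactly the $A_\alpha$ computation already recorded in the excerpt for $B_\alpha$, generalized to two (resp. three) shifts; I would do it by the same bookkeeping. For primes $\varpi \mid l$ (finitely many), the local factor is a fixed rational function of $N(\varpi)^{-\alpha}, N(\varpi)^{-\beta}$, and since $l^* = l_1$ is the square-free part, the $\varpi \mid l$ contribution carries a factor $\sigma_{\alpha,\beta}(\varpi) = N(\varpi)^{-\alpha}+N(\varpi)^{-\beta}$ from $l^*$ when $\varpi \| l$, which is what eventually produces the $\sqrt{N(l_1)}$ in the bound after the normalization; these are bounded crudely.

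Next, I would peel off the zeta-factors recursively. Having identified that $A_{\alpha,\beta}(l)$ equals $\zeta_{K,2}(1+2\alpha)\zeta_{K,2}(1+2\beta)\zeta_{K,2}(1+\alpha+\beta)$ times an Euler product $B_{\alpha,\beta}(l)$ convergent for $\Re(\alpha),\Re(\beta)$ near $0$ (which is \eqref{B}), the point is to continue further left. The local factor of $B_{\alpha,\beta}(l)$ at $\varpi\nmid 2l$ is $1 + O(N(\varpi)^{-2+2\max(|\Re\alpha|,|\Re\beta|)})$; its logarithm expands into a sum of monomials $N(\varpi)^{-(a+b) - 2a\alpha - 2b\beta}$ with $a+b\ge 2$ (plus cross terms), so for the leading such monomial with $a+b=2$ I can extract a factor $\zeta_K(2+2a\alpha+2b\beta)^{\pm 1}$ and be left with a product convergent in a wider region. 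Iterating this $M-2$ more times — exactly the standard argument (as in the proof of, e.g., the analogous lemmas for Dirichlet $L$-functions in \cite{sound1} and \cite{Young2}) — produces, after absorbing all generalized zeta-values $\zeta_K(a+b+2a\alpha+2b\beta)^{d_{a,b}}$ for $1\le a+b\le M-1$, the Euler product $C_{\alpha,\beta}(l)$ whose local factors are $1 + O(N(\varpi)^{-M(1 + 2\Re(\cdot))/M})$-type, hence absolutely convergent for $\Re(\alpha),\Re(\beta) > \frac{1-M}{2M}+\delta$. The factors $\zeta_{K,2}$ versus $\zeta_K$ differ only in the Euler factor at $(1+i)$, which is harmless. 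Since each $\zeta_K(s)$ is meromorphic on $\mathbb C$, \eqref{eq:Aproduct} gives the claimed meromorphic continuation of $A_{\alpha,\beta}(l)$ to $\Re(\alpha),\Re(\beta)>-\tfrac12$ (the constraint coming from needing $M$ large but the arguments $1+2\alpha$, etc., to stay to the right of where the peeling was valid).

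Finally, for the bound on $C_{\alpha,\beta}(l)$: in the stated region the $\varpi\nmid 2l$ part of $C_{\alpha,\beta}(l)$ is $O(1)$ uniformly (a convergent product of factors $1+O(N(\varpi)^{-1-\delta'})$), the $\varpi\mid 2$ part is a fixed constant, and the $\varpi\mid l$ part is a product over at most $O(\log N(l)/\log\log N(l))$ primes of local factors each bounded by a fixed power of $N(\varpi)$; tracking the power of $N(l)$ carefully, the $l^*=l_1$ dependence contributes at most $\prod_{\varpi\|l} (1+N(\varpi)^{|\Re\alpha|}+N(\varpi)^{|\Re\beta|})\le \prod_{\varpi|l_1} N(\varpi)^{1/2+\varepsilon} = N(l_1)^{1/2+o(1)}$ while the remaining $\varpi\mid l$ factors contribute $N(l)^{\varepsilon}$, giving $C_{\alpha,\beta}(l)\ll \sqrt{N(l_1)}\,N(l)^\varepsilon$. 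The argument for $A_{\alpha,\beta,\gamma}(l)$ is identical with one more shift; the combinatorics of which $\zeta_K$-powers appear is more involved but follows the same peeling scheme. The main obstacle I anticipate is purely bookkeeping: carrying out the recursive extraction so that the exponents $d_{a,b}$ (resp. $d_{a,b,c}$) are genuinely independent of $l$ and verifying that the error after $M-1$ steps gives local factors decaying like $N(\varpi)^{-(M-1)(1+2\min\Re)/(\cdots)}$ so as to pin down the precise abscissa $\frac{1-M}{2M}+\delta$ — this requires being careful that the "cross terms" produced when one zeta-factor is divided out do not spoil the size of the subsequent local factors, but this is exactly the standard computation and introduces no genuinely new difficulty.
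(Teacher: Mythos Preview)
Your proposal is correct and follows precisely the approach the paper has in mind: the paper omits the proof entirely, referring to \cite[Lemma~4.1]{Young1} and \cite[Lemma~4.1]{Sono}, whose arguments are exactly the recursive zeta-factor extraction from the Euler product that you describe. Your bookkeeping of the $\sqrt{N(l_1)}$ contribution from the local factors at $\varpi \mid l_1$ (via the leading term $\sigma_{\alpha,\beta}(\varpi) \ll N(\varpi)^{1/2}$ in the region) is also in line with how those references obtain the bound.
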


   The proof of the above lemma is similar to that of \cite[Lemma 4.1]{Young1} and \cite[Lemma 4.1]{Sono}, so we shall omit it here.  We only note here that when $a+b=1$ or $a + b + c =1$ then we have $d_{a,b}=d_{a,b,c} = 1$ and this readily implies the analytical behaviors of $B_{\alpha,\beta}(l)$ and
 $B_{\alpha,\beta,\gamma}(l)$ defined in \eqref{B}.

   To facilitate our treatments in the proof of Theorem \ref{theo:recursive2}, we shall make the following remark similar to \cite[Remark 2.2]{Young1} and \cite[Remark 2.2]{Young2}.
\begin{remark}
\label{remark:zero}
 We choose $G_1$ so that $G_1(\pm \alpha) = G_1(\thalf \pm \alpha) = 0$. We choose $G_j(s), j=2,3$
to vanish at the poles of all the $\zeta_K$'s which occur in \eqref{eq:Aproduct} as numerators (i.e., with $d_{a,b}>0$ or $d_{a,b,c} > 0$) in the corresponding factorization of  $A_{\alpha + s, \beta+s}$ or $A_{\alpha + s, \beta+s, \gamma+s}$, and also
to be divisible by all the $\zeta_K$'s which occur in \eqref{eq:Aproduct} as denominators (i.e., with $d_{a,b}<0$ or $d_{a,b,c} < 0$) in the corresponding factorization of  $A_{\alpha + s, \beta+s}$ or $A_{\alpha + s, \beta+s, \gamma+s}$ for $M$ large enough so that $A_{\alpha + s, \beta+s}, A_{\alpha + s, \beta+s, \gamma+s}$ have meromorphic continuations to $\Re(s) > -\half + \varepsilon$ for a given $\epsilon>0$. We also assume that $G_j(s)$ is symmetric under any permutation of $\{\alpha, \beta, \gamma\}$, and under switching any $\alpha, \beta, \gamma$ with its negative, and under switching $s$ with $-s$.
\end{remark}

Let $g(k,n)$ be defined as in \eqref{g2}. We now fix a generator for every prime ideal $(\varpi) \in \mathcal{O}_K$ together with $1$ as the generator for
the ring $\mz[i]$ itself and
extend to any ideal of $\mathcal{O}_K$ multiplicatively. We denote the set of such generators by $G$. Let $k_1 \in \mathcal{O}_K$ be square-free and
$(l, a)=1$ for a primary element $l \in \mathcal{O}_K$. For fixed integer $j \geq 1$ and complex numbers $\alpha_i, 1 \leq i \leq j$, we define
$J_{k_1,j}(v,w;l, a)$ as
\begin{align}
\label{eq:J}
  J_{k_1,j}(v,w;l, a) &=\sum_{\substack{n \equiv 1 \bmod {(1+i)^3} \\ (n,a)=1}}  \sum_{\substack {k_2 \neq 0 \\ k_2 \in \mathcal{O}_K}}
\frac {\sigma_{\alpha_1, \cdots, \alpha_j}(n)}{N(n)^{w}N(k_2)^{v}} \frac {g(k_1k^2_2,ln)}{N(ln)},
\end{align}
    where we use the convention throughout the paper that all sums over $k_2$ are restricted to $k_2 \in G$.

  Our next lemma gives the analytic properties of $J_{k_1,j}(v,w;l, a) $.
\begin{lemma}
\label{lemma:Jprop}
 Suppose that $l$ is primary such that $(l, 2a) =1$, $k_1$ is square-free, and $J_{k_1,j}(v,w;l, a) $ is given by \eqref{eq:J} for $\Re(v) > 2$ and $\Re(w) > 2$.  Then $J_{k_1,j}(v,w;l, a)$ has a meromorphic continuation to $\Re(v) \geq 2$ and $\Re(w) > \delta$ for any $\delta > 0$, provided that $\alpha_i, 1 \leq i \leq j$ are small enough compared to $\delta$.   Moreover, in this region we have
\begin{equation*}
  J_{k_1,j}(v,w;l, a) =\prod^j_{i=1} L_{2al}(\thalf + w + \alpha_i, \chi_{ik_1})  I_{k_1,j}(v, w),
\end{equation*}
 where $I_{k_1,j }(v, w)$ is analytic in this region and satisfies the bound
\begin{equation*}
 I_{k_1,j}(v, w) \ll_{\delta, \varepsilon} N(l)^{-\half + \varepsilon} .
\end{equation*}
\end{lemma}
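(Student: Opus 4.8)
The plan is to follow the strategy used for the analogous lemma in \cite{Young1} (see also \cite{Sono}): evaluate the inner sum over $n$ in \eqref{eq:J} as an Euler product by means of the multiplicativity and explicit values of the Gauss sum recorded in Lemma~\ref{Gausssum}, recognise the ``generic'' Euler factors as a product of Hecke $L$-functions attached to $\chi_{ik_1}$, and absorb all remaining local data --- the factors at the finitely many ramified primes together with the whole sum over $k_2$ --- into the function $I_{k_1,j}(v,w)$.

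First I would use Lemma~\ref{Gausssum}(i) to factor $g(k_1k_2^2,ln)=\prod_{\varpi}g(k_1k_2^2,\varpi^{c_\varpi})$ over the primary prime factorisation of $ln$, with $c_\varpi=v_\varpi(ln)$, and split each primary $n$ coprime to $a$ as $n=n'm$ where $n'$ is supported on primes dividing $2alk_1k_2$ and $(m,2alk_1k_2)=1$. At a prime $\varpi\mid m$ one has $h:=v_\varpi(k_1k_2^2)=0$, so Lemma~\ref{Gausssum}(ii) shows $g(k_1k_2^2,\varpi^{c_\varpi})$ vanishes once $c_\varpi\ge 2$ and equals $\leg{ik_1}{\varpi}N(\varpi)^{1/2}$ when $c_\varpi=1$; thus the $m$-sum is automatically supported on squarefree $m$. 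Since $\sigma_{\alpha_1,\cdots,\alpha_j}$ is multiplicative and is precisely the Dirichlet coefficient sequence of $\prod_{i=1}^{j}L(s+\alpha_i,\cdot)$ (this is how it enters the approximate functional equation of Lemma~\ref{lem:AFE}), the resulting sum of $\sigma_{\alpha_1,\cdots,\alpha_j}(m)\leg{ik_1}{m}N(m)^{-w-1/2}$ over squarefree $m$ coprime to $2alk_1k_2$ equals $\prod_{i=1}^{j}L_{2alk_1k_2}(\tfrac12+w+\alpha_i,\chi_{ik_1})$ times an Euler product $\prod_{\varpi\nmid 2alk_1k_2}\bigl(1+O(N(\varpi)^{-1-2\Re(w)+2\max_i|\Re(\alpha_i)|})\bigr)$ coming from the squarefree restriction, which converges absolutely for $\Re(w)>\delta$ provided the $\alpha_i$ are small compared to $\delta$. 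Reinstating the Euler factors at $\varpi\mid k_2$ (those at $\varpi\mid k_1$ being already trivial for $\chi_{ik_1}$) turns $L_{2alk_1k_2}$ into $L_{2al}$ at the cost of finitely many holomorphic non-vanishing factors, which I would put into $I_{k_1,j}(v,w)$.

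It then remains to show that $I_{k_1,j}(v,w)$ --- the local factors at $\varpi\mid 2alk_1$, the sum over $k_2\in G$, and the two convergent Euler products above --- is holomorphic in $\Re(v)\ge 2$, $\Re(w)>\delta$ and obeys $I_{k_1,j}(v,w)\ll_{\delta,\varepsilon}N(l)^{-1/2+\varepsilon}$. Holomorphy follows by evaluating, via Lemma~\ref{Gausssum}(ii), each local Dirichlet series in $(v,w)$ at the bad primes and checking it has no poles in the region, together with absolute convergence of the product over $\varpi\nmid 2alk_1$ and of the sum over $k_2$ for $\Re(v)\ge 2$. For the size bound, the decisive saving $N(l)^{-1/2}$ comes from pairing the factor $N(ln)^{-1}$ in \eqref{eq:J} with the Gauss sum, which has size $N(ln)^{1/2}$ at the generic primes of $l$; at a prime $\varpi$ dividing $l$ to a power $f\ge 2$ the vanishing statement in Lemma~\ref{Gausssum}(ii) forces $\varpi\mid k_2$ to order $\ge (f-1)/2$, so that the weight $N(k_2)^{-v}$ with $\Re(v)\ge 2$ more than compensates and the $\varpi$-contribution is $\ll N(\varpi)^{-f/2+\varepsilon}$. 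Multiplying the local bounds gives the claim, and since the only possible poles of $J_{k_1,j}$ in the region are those of $\prod_i L(\tfrac12+w+\alpha_i,\chi_{ik_1})$, the stated meromorphic continuation follows.

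The main obstacle will be precisely this bookkeeping at the ramified primes $\varpi\mid 2alk_1$ and $\varpi\mid k_2$: there $g(k_1k_2^2,ln)$ has to be evaluated branch by branch from Lemma~\ref{Gausssum}(ii) --- the ``$c_\varpi\le h$ even'' branch producing terms as large as $N(\varpi)^{c_\varpi}$ --- and one must verify both that the resulting local factors are holomorphic in the stated $(v,w)$-region (this is where $\Re(v)\ge 2$ is genuinely needed) and that, after the main $L$-factors have been extracted, the leftover local factors carry exactly the saving $N(l)^{-1/2+\varepsilon}$ in the $l$-aspect. Everything else is a routine consequence of multiplicativity and of the explicit Gauss sum evaluations already at hand.
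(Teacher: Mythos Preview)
Your proposal is correct and follows exactly the approach the paper intends: the paper omits the proof entirely, stating only that it is similar to \cite[Lemma 5.1]{Young1}, \cite[Lemma 5.2]{Young2} and \cite[Lemma 4.3]{Sono}, and your sketch is precisely an adaptation of those arguments to the Gaussian setting, with the Gauss sum evaluations of Lemma~\ref{Gausssum} playing the role of their rational analogues. The identification of the generic Euler factor as $\leg{ik_1}{\varpi}N(\varpi)^{1/2}$, the extraction of $\prod_i L_{2al}(\tfrac12+w+\alpha_i,\chi_{ik_1})$, and the local bookkeeping at ramified primes are all exactly as in those references, so there is nothing to add.
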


  The proof of the above lemma is similar to that of \cite[Lemma 5.1]{Young1}, \cite[Lemma 5.2]{Young2} and \cite[Lemma 4.3]{Sono}, so we omit it here.

\section{Proof of Theorem \ref{theo:recursive2}}
\label{sec: pfrecursive}

\subsection{Initial Treatment}

  We fix $\alpha_1=\alpha, \alpha_2=\beta, \alpha_3=\gamma$ throughout and we identify $M_{(\alpha_j)}(l)$ with $M_{\alpha}(l), M_{\alpha, \beta}(l)$ and  $M_{\alpha, \beta, \gamma}(l)$ with $j=1,2,3$, respectively. This applies to similar notations such as $g_{(\alpha_j)}, \sigma_{(\alpha_j)}, V_{(\alpha_j)}$  as well. We apply the approximate functional equation \eqref{fcneqnL} for a fixed  $1 \leq j \leq 3$ to write $M_{(\alpha_j)}(l) = M_1((\alpha_j),l) + M_{-1}((\alpha_j),l)$, where
\begin{align*}
 M_1((\alpha_j),l) =&  \sumstar_{(d,2) = 1}F(N(d)) \sum_{\substack{ n \equiv 1 \bmod {(1+i)^3}}}
\frac{\chi_{(1+i)^5d}(nl) \sigma_{(\alpha_j)}(n)}{N(n)^{\frac{1}{2}}}V_{(\alpha_j)}  \left(\frac{ N(n)}{N(d)^{j/2}}\right), \\
 M_{-1}((\alpha_j),l) =& \Gamma_{(\alpha_j)} \sumstar_{(d,2) = 1} N(d)^{-\sum^j_{i=1}\alpha_i} F(N(d))
\sum_{\substack{ n \equiv 1 \bmod {(1+i)^3}}} \frac{\chi_{(1+i)^5d}(nl)  \sigma_{-(\alpha_j)}(n)}{N(n)^{\thalf}} V_{-(\alpha_j)}
\left(\frac{N(n)}{N(d)^{j/2}}\right).
\end{align*}

We shall make the convention that we may often drop the dependence on $(\alpha_j)$ and $l$ to simply write $M_1$, $M_2$ and other expressions when there is no risk of confusion. We shall also mainly focus on evaluating $M_1$ as the evaluation of $M_2$ can be done by noticing the following remark:
\begin{remark}
\label{remark:M1toM2}
To derive an expression for $M_2$ via a corresponding term from $M_1$ involves swapping $\alpha_i$ and $-\alpha_i$, $1 \leq i \leq 3$, replacing $F(x)$ by $F_{-(\alpha_j)}(x) = x^{-\sum^j_{i=1}\alpha_i} F(x)$, and multiplying by $\Gamma_{(\alpha_j)}$, in that order.
\end{remark}

  We now apply the M\"{o}bius inversion to remove the square-free condition over $d$ in $M_1$ and $M_2$. Let $\mu_{[i]}$ be the M\"{o}bius function in $\mathcal{O}_{K}$, we have
\begin{equation*}
 M_1 = \sum_{\substack{ a \equiv 1 \bmod {(1+i)^3} \\ (a,2l) = 1}}  \mu_{[i]}(a) \sum_{(d,2)=1} F(N(d a^2))
\sum_{\substack{ n \equiv 1 \bmod {(1+i)^3}\\ (n,2a)=1}}
\frac{\chi_{(1+i)^5d}(nl)\sigma_{(\alpha_j)}(n)}{N(n)^{\frac{1}{2}}} V_{(\alpha_j)} \left(\frac{ N(n)}{N(a^2d)^{j/2}}\right).
\end{equation*}
Now we separate the terms with $N(a) \leq Y$ and with $N(a) > Y$ ($Y$ a parameter to be chosen later), writing $M_1 = M_N + M_R$, respectively.  We similarly write $M_{-1} = M_{-N} + M_{-R}$.

\subsection{Estimating $M_R$: applying the recursion}
\label{section:MR}

   We now make a change of variable by letting $d \rightarrow b^2 d$ with the new $d$ being square-free to see that
\begin{equation*}
M_R = \sum_{\substack{ a \equiv 1 \bmod {(1+i)^3} \\ (a,2l) = 1\\ N(a) > Y}}\mu_{[i]}(a)
  \sum_{\substack{ b \equiv 1 \bmod {(1+i)^3} \\ (b,2l)=1}}  \sumstar_{(d,2)=1}   F(N(d (ab)^2))
\sum_{\substack{ n \equiv 1 \bmod {(1+i)^3} \\ (n,2ab)=1}}  \frac{\chi_{(1+i)^5d}(nl)\sigma_{(\alpha_j)}(n)}{N(n)^{\frac{1}{2}}}
V_{(\alpha_j)}  \left(\frac{ N(n)}{N((ab)^2d)^{j/2}}\right).
\end{equation*}

   We further let $c=ab$ to obtain
\begin{align*}
M_R = \sum_{\substack{ c \equiv 1 \bmod {(1+i)^3} \\ (c,2l)=1}}  \sum_{\substack{ a \equiv 1 \bmod {(1+i)^3} \\ a |c \\ N(a) > Y}} \mu_{[i]}(a)  \sumstar_{(d,2)=1}  F\left(N(d c^2) \right)
\sum_{\substack{ n \equiv 1 \bmod {(1+i)^3} \\ (n,2c)=1}} \frac{\chi_{(1+i)^5d}(nl)\sigma_{(\alpha_j)}(n)}{N(n)^{\frac{1}{2}}}
V_{(\alpha_j)} \left(\frac{ N(n)}{N(c^2d)^{j/2}}\right).
\end{align*}
Using the definition of $V_{(\alpha_j)} $ as an integral representation given in \eqref{eq:Vdef}, we see that the inner sum over $n$ above is
\begin{equation*}
 \sum_{\substack{ n \equiv 1 \bmod {(1+i)^3} \\ (n,2ab)=1}}  \frac{\chi_{(1+i)^5d}(nl)\sigma_{(\alpha_j)}(n)}{N(n)^{\frac{1}{2}}}
 \frac{1}{2\pi i} \int\limits\limits_{(2)} \frac{G_j(s)}{s} g_{(\alpha_j)}(s) \frac{N(c^2 d)^{js/2}}{N(n)^s} ds.
\end{equation*}

  We move the sum over $n$ inside the integral to get
\begin{align*}
M_R =& \sum_{\substack{ c \equiv 1 \bmod {(1+i)^3} \\ (c,2l)=1}} \sum_{\substack{ a \equiv 1 \bmod {(1+i)^3} \\ a |c \\ N(a) > Y}}  \mu_{[i]}(a) \sumstar_{(d,2)=1} \chi_{(1+i)^5d}(l) F\left( N(d c^2) \right)
\\
& \times \frac{1}{2 \pi i} \int\limits\limits_{(\thalf + \varepsilon)} (N(c^2d))^{js/2} \prod^j_{i=1}L(\thalf + \alpha_i + s, \chi_{(1+i)^5d})
\prod^j_{i=1} \prod_{\substack{ \varpi_j \equiv 1 \bmod {(1+i)^3} \\ \varpi_j | c}}\Big(1-\frac {\chi_{(1+i)^5d}(\varpi_j)}{N(\varpi_j)^{(1/2+\alpha+s)}} \Big ) \frac{G_j(s)}{s} g_{(\alpha_j)}(s) ds.
\end{align*}
   We now move the line of integration to $\varepsilon$ without crossing any poles in this process by Remark \ref{remark:zero}.  Then expanding $\displaystyle \prod_{\substack{ \varpi_j \equiv 1 \bmod {(1+i)^3} \\ \varpi_j | c}}\Big(1-\frac {\chi_{(1+i)^5d}(\varpi_j)}{N(\varpi_j)^{(1/2+\alpha+s)}} \Big )$, we obtain that
\begin{align}
\label{eq:prerecursion}
\begin{split}
M_R =& \sum_{\substack{ c \equiv 1 \bmod {(1+i)^3} \\ (c,2l)=1}}  \sum_{\substack{ a \equiv 1 \bmod {(1+i)^3} \\ a |c \\ N(a) > Y}} \mu_{[i]}(a)
\sum_{\substack{ r_i \equiv 1 \bmod {(1+i)^3} \\ 1 \leq i \leq 3 \\ r_i | c}}
\prod^j_{i=1}\frac{\mu_{[i]}(r_i)}{N(r_i)^{\half + \alpha_i}}
\\
& \times \frac{1}{2 \pi i} \int\limits\limits_{(\varepsilon)} \sumstar_{(d,2)=1} \chi_{(1+i)^5d}(l\prod^j_{i=1}r_i) F_{\frac{js}{2};N(c^2)}(N(d))\Big(N(r_i)\Big )^{-s} \prod^j_{i=1}L(\thalf + \alpha_i + s, \chi_{(1+i)^5d})\frac{G_j(s)}{s} g_{(\alpha_j)}(s) ds,
\end{split}
\end{align}
where $F_{\nu;y}(x) = (xy)^{\nu} F(xy)$ and $\varepsilon \asymp (\log{X})^{-1}$.

 Note that the inner sum over $d$ above is of the form $M_{(\alpha_j+s)}(l\prod^j_{i=1}r_i)$, but with a new weight function with smaller support ($N(d) \asymp X/N(c)^2$). Now we truncate the integral in \eqref{eq:prerecursion} so that $|\Im(s)| \leq (\log(X/N(c^2))^2$.  When $N(c)^2 \leq X^{1-\varepsilon}$, the exponential decay of the integrand implies that the error introduced by this truncation is negligible.  While when $N(c)^2 \geq X^{1-\varepsilon}$, the sum over $d$ is almost bounded so that the convexity bound $L(1/2 + \alpha + s, \chi_{(1+i)^5d}) \ll ((1+|s|)^2N(d))^{1/4+\varepsilon}$
implies that the error introduced is of size $O(X^{\half + \varepsilon})$. We can then apply Theorem \ref{theo:recursive2} to the truncated integral,
and the same argument as above allows us to extend the integral back to the whole vertical line, without introducing a new error.
In this way, we can express $M_R$ as the sum of $2^j$ main terms plus an error of size
\begin{align*}
 \ll & X^{\varepsilon} \sum_{\substack{ c \equiv 1 \bmod {(1+i)^3} \\ (c,2l)=1}}   \sum_{\substack{ a \equiv 1 \bmod {(1+i)^3} \\ a |c \\ N(a) > Y}}|\mu_{[i]}(a)| \sum_{\substack{ r_i \equiv 1 \bmod {(1+i)^3} \\ 1 \leq i \leq 3 \\ r_i | c}}
\prod^j_{i=1}\frac{|\mu_{[i]}(r_i)|}{N(r_i)^{\half}}
N(l \prod^j_{i=1}r_i) ^{1/2 + \varepsilon} \leg{X}{N(c)^2}^{f + \varepsilon} \\
\ll & \frac{X^{f + \varepsilon}}{Y^{2f - 1}} N(l)^{1/2 + \varepsilon}.
\end{align*}

For the main terms, by a direct application of  Theorem \ref{theo:recursive2}, we see  that
\begin{align*}
 & M_R(\epsilon_1, \cdots, \epsilon_j) \\
=& \pi \sum_{\substack{ c \equiv 1 \bmod {(1+i)^3} \\ (c,2l)=1}}  \sum_{\substack{ a \equiv 1 \bmod {(1+i)^3} \\ a |c \\ N(a) > Y}} \mu_{[i]}(a)
\sum_{\substack{ r_i \equiv 1 \bmod {(1+i)^3} \\ 1 \leq i \leq 3 \\ r_i | c}}
\prod^j_{i=1}\frac{\mu_{[i]}(r_i)}{N(r_i)^{\half + \alpha_i}}
\frac{1}{\sqrt{N((l\prod^j_{i=1}r_i)^*)}} \frac{1}{2 \zeta_{K,2}(2)}
\\
& \times \frac{1}{2\pi i} \int\limits\limits_{(\varepsilon)} A_{\epsilon_1(\alpha_1+ s), \cdots, \epsilon_j(\alpha_j+ s)}(l\prod^j_{i=1}r_i)
\Gamma_{\alpha_1+ s, \cdots, \alpha_j + s}^{\delta_1, \cdots, \delta_j}\widehat{F}_{js/2;N(c^2)}(w) \frac{1}{N(\prod^j_{i=1}r_i)^{s}} \frac{G_j(s)}{s} g_{(\alpha_j)}(s) ds,
\end{align*}
  where we denote $w = 1 - \delta_1(\alpha_1+s) - \cdots- \delta_j(\alpha_j+s)$.

   Now we apply the relation
\begin{equation*}
 \widehat{F}_{js/2;N(c^2)}(u) = \int_0^{\infty} (xN(c^2))^{\frac{js}{2}} F(N(c^2)x) x^{u} \frac{dx}{x} = N(c)^{-2u} \widehat{F}(\tfrac{js}{2} + u)
\end{equation*}
  to see that
\begin{align}
\label{eq:MRmainterms}
\begin{split}
M_R(\epsilon_1, \cdots, \epsilon_j) =&  \frac{\pi }{2 \zeta_{K,2}(2)}\sum_{\substack{ c \equiv 1 \bmod {(1+i)^3} \\ (c,2l)=1}}\frac{1}{N(c)^{2w}}  \sum_{\substack{ a \equiv 1 \bmod {(1+i)^3} \\ a |c \\ N(a) > Y}}\mu_{[i]}(a)
\frac{1}{2\pi i} \int\limits\limits_{(\varepsilon)} \Gamma_{\alpha_1+ s, \cdots, \alpha_j + s}^{\delta_1, \cdots, \delta_j}
\frac{G_j(s)}{s} g_{(\alpha_j)}(s)
\widehat{F}(\tfrac{js}{2} + w)
\\
& \times \sum_{\substack{ r_i \equiv 1 \bmod {(1+i)^3} \\ 1 \leq i \leq 3 \\ r_i | c}}
\prod^j_{i=1}\frac{\mu_{[i]}(r_i)}{N(r_i)^{\half + \alpha_i+s}}\frac{1}{\sqrt{N((l\prod^j_{i=1}r_i)^*)}}
 A_{\epsilon_1(\alpha_1+ s), \cdots, \epsilon_j(\alpha_j+ s)}(l\prod^j_{i=1}r_i)
  ds.
\end{split}
\end{align}

   We summarize our discussions above in the following result.
\begin{lemma}
\label{lemma:MRresult}
If Theorem \ref{theo:recursive2} holds with a parameter $f > 1/2$ when $j=1,2$ and $f > 3/4$ when $j=3$, then
\begin{align*}
 M_R = \sum_{\epsilon_1, \cdots \epsilon_j \in \{\pm 1\}} M_R(\epsilon_1, \cdots, \epsilon_j) + O(\sqrt{N(l)} \frac{X^{f + \varepsilon}}{Y^{2f-1}})+O(X^{1/2+\varepsilon}),
\end{align*}
where $M_R(\epsilon_1, \cdots, \epsilon_j)$ is defined in \eqref{eq:MRmainterms}.
\end{lemma}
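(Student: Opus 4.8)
\textbf{Proof proposal for Lemma \ref{lemma:MRresult}.}

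The plan is to track carefully how the recursive input (Theorem \ref{theo:recursive2} with parameter $f$) propagates through the expression for $M_R$ already obtained in \eqref{eq:prerecursion}, and to bound the resulting error. The starting point is the observation, already noted above, that the inner sum over $d$ in \eqref{eq:prerecursion} is exactly a twisted moment $M_{(\alpha_j+s)}(l\prod_{i=1}^j r_i)$ attached to a modified weight function $F_{js/2;N(c^2)}$ whose effective support is $N(d)\asymp X/N(c)^2$. So the first step is to justify truncating the $s$-integral to $|\Im(s)|\leq (\log(X/N(c^2)))^2$: when $N(c)^2\leq X^{1-\varepsilon}$ the rapid decay of $G_j(s)g_{(\alpha_j)}(s)$ in vertical strips makes the tail negligible, while when $N(c)^2\geq X^{1-\varepsilon}$ the sum over $d$ has $O(X^\varepsilon)$ terms and the convexity bound $L(1/2+\alpha+s,\chi_{(1+i)^5d})\ll((1+|s|)^2 N(d))^{1/4+\varepsilon}$ controls the whole contribution by $O(X^{1/2+\varepsilon})$, which is harmless.

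Next I would apply Theorem \ref{theo:recursive2} (in the hypothesized form with parameter $f$) to the truncated $d$-sum, with the parameters $\alpha_i$ replaced by $\alpha_i+s$ and $l$ replaced by $l\prod_i r_i$; here one must check the parameters still lie in the admissible rectangle, which holds since $\Re(s)=\varepsilon\asymp(\log X)^{-1}$ and $|\Im(s)|$ is at most a power of $\log X$. This produces $2^j$ main terms plus an error term; the main terms, after inserting the integral representation \eqref{eq:Vdef} back in, reassembling the sums over $n$ (equivalently $r_i$), and substituting the scaling relation $\widehat{F}_{js/2;N(c^2)}(u)=N(c)^{-2u}\widehat{F}(\tfrac{js}{2}+u)$, are precisely the expressions $M_R(\epsilon_1,\dots,\epsilon_j)$ in \eqref{eq:MRmainterms}. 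One then extends the $s$-integral back to the full vertical line, again at negligible cost by the decay of the integrand (the $A$-factor and $\Gamma$-factor grow at most polynomially in $|\Im(s)|$ after the choices in Remark \ref{remark:zero}, while $G_j(s)$ decays rapidly).

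It remains to bound the error. Summing the error $O(X^f\sqrt{N(l\prod r_i)}(N(l\prod r_i)X)^\varepsilon)$ from Theorem \ref{theo:recursive2}, with $X$ replaced by $X/N(c)^2$, over $r_i\mid c$, over $a\mid c$ with $N(a)>Y$, and over $c$ coprime to $2l$, one gets a bound of the shape $X^\varepsilon\sum_c\sum_{a\mid c,\,N(a)>Y}|\mu_{[i]}(a)|\sum_{r_i\mid c}\prod_i N(r_i)^{-1/2}N(l\prod r_i)^{1/2+\varepsilon}(X/N(c)^2)^{f+\varepsilon}$. Using $N(r_i)^{1/2}/N(r_i)^{1/2}=1$ absorbed into $N(c)^\varepsilon$, pulling out $N(l)^{1/2+\varepsilon}$, and noting $\sum_{a\mid c,N(a)>Y}1\ll N(c)^\varepsilon$ with the constraint forcing $N(c)\geq N(a)>Y$, the $c$-sum becomes $\sum_{N(c)>Y}N(c)^{-2f+\varepsilon}\ll Y^{1-2f+\varepsilon}$, which converges precisely because $f>1/2$ (the hypothesis $f>3/4$ when $j=3$ is only needed later to close the recursion, not here). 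This yields the stated error $O(\sqrt{N(l)}\,X^{f+\varepsilon}/Y^{2f-1})$, and combined with the truncation error $O(X^{1/2+\varepsilon})$ gives the lemma. The only slightly delicate point is the bookkeeping in reassembling the main terms: one must verify that after restoring the sum over $n$ the arithmetic factor is genuinely $A_{\epsilon_1(\alpha_1+s),\dots}(l\prod r_i)$ with the correct power of $N((l\prod r_i)^*)$, so that \eqref{eq:MRmainterms} is literally what emerges; this is routine but is where sign and normalization errors would hide.
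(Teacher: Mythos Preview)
Your proposal is correct and follows essentially the same argument as the paper: truncate the $s$-integral using decay of $G_j g_{(\alpha_j)}$ (with the convexity bound handling the range $N(c)^2\geq X^{1-\varepsilon}$ at cost $O(X^{1/2+\varepsilon})$), apply the hypothesized asymptotic for $M_{(\alpha_j+s)}(l\prod_i r_i)$ with effective $X$ replaced by $X/N(c)^2$, re-extend the integral, and sum the resulting error over $r_i\mid c$, $a\mid c$ with $N(a)>Y$, and $c$ to obtain $\sqrt{N(l)}\,X^{f+\varepsilon}/Y^{2f-1}$. Your observation that only $f>1/2$ is needed for the $c$-sum to converge (the stronger condition $f>3/4$ for $j=3$ being used elsewhere) is accurate and matches the paper's estimate.
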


\subsection{Estimating $M_R$: further simplifications}
\label{section:proofofMRbound}

    In this section, we show that some of the main terms appearing in Lemma \ref{lemma:MRresult} can be treated as error terms as well by establishing
\begin{lemma}
\label{lemma:MRbound}
 If at least two of the $\epsilon_i$'s are $-1$, then for a special choice of $G_j(s)$ described in Remark \ref{remark:zero}, we have
\begin{equation}
\label{eq:twonegeps}
 M_R(\epsilon_1, \epsilon_2) \ll  Y X^{1/2} (N(l) X)^{\varepsilon}, \quad M_R(\epsilon_1, \epsilon_2, \epsilon_3) \ll  Y X^{3/4} (N(l) X)^{\varepsilon},
\end{equation}
and furthermore,
\begin{equation}
\label{eq:threenegeps}
 M_R(-1, -1, -1) \ll X^{3/4} (N(l) X)^{\varepsilon}.
\end{equation}
\end{lemma}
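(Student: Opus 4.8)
The strategy is to start from the formula \eqref{eq:MRmainterms} for $M_R(\epsilon_1,\cdots,\epsilon_j)$ and exploit the analytic structure supplied by Lemma \ref{lemma:A} together with the vanishing conditions on $G_j$ imposed in Remark \ref{remark:zero}. The key observation is that each factor $A_{\epsilon_1(\alpha_1+s),\cdots,\epsilon_j(\alpha_j+s)}(l\prod r_i)$ factors, via \eqref{B} and \eqref{eq:Aproduct}, into a product of $\zeta_{K,2}$-factors times a $B$-function (or $C$-function) that is holomorphic and of polynomial growth in a fixed half-plane $\Re(s)>-\tfrac14+\delta$. When $\epsilon_i=-1$ the relevant $\zeta_{K,2}(1+2\epsilon_i(\alpha_i+s))=\zeta_{K,2}(1-2\alpha_i-2s)$ has its pole at $s=\tfrac12-\alpha_i$, which lies to the \emph{right} of the line $\Re(s)=\varepsilon$; likewise a mixed factor $\zeta_{K,2}(1+\epsilon_i(\alpha_i+s)+\epsilon_{i'}(\alpha_{i'}+s))$ with both $\epsilon$'s equal to $-1$ has its pole at $s=1-\alpha_i-\alpha_{i'}$, again to the right. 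Hence, when at least two of the $\epsilon_i$ are $-1$, one may \emph{shift the contour in \eqref{eq:MRmainterms} to the left}, say to $\Re(s)=-\tfrac14+\delta$, without crossing any pole: the only potential poles on the way are those of the $\zeta_{K,2}$-numerators and the factor $1/s$, all of which are cancelled by the zeros of $G_j(s)$ built in via Remark \ref{remark:zero} (note $G_j$ vanishes at $s=0$ as well, killing the $1/s$).

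First I would carry out this contour shift and bound the resulting integral trivially. On the new line $\Re(s)=-\tfrac14+\delta$ one has: the rapid decay of $G_j(s)$ in vertical strips to handle the $\Im(s)$-integration; the bound $\widehat F(\tfrac{js}{2}+w)\ll X^{\Re(\tfrac{js}{2}+w)}$ from $\widehat F(s)=X^s\widehat\Phi(s)$ together with the rapid decay of $\widehat\Phi$; the bound $C_{\cdots}(l\prod r_i)\ll \sqrt{N((l\prod r_i)_1)}N(l\prod r_i)^\varepsilon\le \sqrt{N(l^*)}N(lr_1\cdots r_j)^{\varepsilon}$ from Lemma \ref{lemma:A} (using that the square-free part of $l\prod r_i$ divides $l^* \cdot r_1\cdots r_j$, and the $r_i$ are square-free divisors of $c$); and the size $\Gamma^{\delta_1,\cdots,\delta_j}_{\cdots}\ll 1$ uniformly in the relevant region. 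On $\Re(s)=-\tfrac14+\delta$ the exponent of $X$ coming from $\widehat F$ is $\Re(\tfrac{js}{2}+1-\sum\delta_i(\alpha_i+s))$; since at least two $\delta_i=1$ and $\Re(\alpha_i)\ll 1/\log X$, this is at most $1-(\#\{i:\delta_i=1\}-j/2)(\tfrac14-\delta)+\varepsilon \le 1-(2-j/2)(\tfrac14)+\varepsilon$, giving the bound $X^{1/2+\varepsilon}$ for $j=2$ and $X^{3/4+\varepsilon}$ for $j=3$ from the $X$-power alone. The remaining sums over $c$ (with $N(c)^{-2w}$, $\Re(w)\approx 1$ when two $\delta_i=1$... but actually on the shifted line $\Re(w)=\Re(1-\sum\delta_i(\alpha_i+s))$ grows; one must be careful), over $a\mid c$ with $N(a)>Y$, and over $r_i\mid c$ are then absolutely convergent and contribute the factor $Y$ (the divisor sum over $a$ with $N(a)>Y$ is where a single power of $Y$ enters, after bounding $\sum_{N(a)>Y}\sum_{c:a|c}(\cdots)$ by pulling out $a$ and noting the $c/a$-sum converges) and a $\sqrt{N(l^*)}\le\sqrt{N(l)}$. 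This yields \eqref{eq:twonegeps}.

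For \eqref{eq:threenegeps}, when all three $\epsilon_i=-1$ one gains an extra $\zeta_{K,2}$-factor with a pole to the right, so one can push the contour slightly further or, more simply, observe that the constraint $N(a)>Y$ is irrelevant here: one instead uses that in the $(-1,-1,-1)$ case the dependence on $c$ through the product $N(c)^{-2w}\prod_i N(r_i)^{-1/2-\alpha_i-s}$ is strong enough that the sum over \emph{all} $c$ (not just those with a large divisor $a$) converges to a quantity $\ll (N(l)X)^\varepsilon$, so the $\sum_{a|c,N(a)>Y}\mu_{[i]}(a)$ can be bounded trivially by the full divisor function and the $Y$ is absorbed; what remains is the pure $X^{3/4+\varepsilon}$ from the $\widehat F$ exponent on the shifted line. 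I expect the main obstacle to be \textbf{bookkeeping the growth of the $X$-exponent and of $N(c)^{-2w}$ simultaneously after the contour shift}: when $\delta_i=1$ the quantity $w=1-\sum\delta_i(\alpha_i+s)$ has real part that \emph{increases} as $\Re(s)$ decreases, so $N(c)^{-2w}$ decays faster (good for the $c$-sum) but $\widehat F(\tfrac{js}2+w)\ll X^{\Re(w)+\Re(js/2)}$ — and $\Re(w)+\Re(js/2) = 1 + (\tfrac j2-\#\{\delta_i=1\})\Re(s)+O(1/\log X)$, so the net $X$-power is governed by $\tfrac j2-\#\{\delta_i=1\}<0$, which is exactly what makes shifting left profitable; one must check the two effects are compatible and that the $r_i$-sums (which also carry $N(r_i)^{-s}$, growing as $\Re(s)\to-\tfrac14$) still converge absolutely, which they do since $r_i\mid c$ are square-free and $\sum_{r|c}N(r)^{1/4}\ll N(c)^{1/4+\varepsilon}$, comfortably dominated by $N(c)^{-2w}$ for $w$ near $1$ or larger. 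Once these convergence checks are in place the bounds \eqref{eq:twonegeps} and \eqref{eq:threenegeps} follow from collecting the $X$-power, the $\sqrt{N(l)}$, the $Y$, and an $(N(l)X)^\varepsilon$.
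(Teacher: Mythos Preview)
Your proposal contains a sign error that breaks the argument. You correctly compute that the $X$-exponent in $\widehat F(\tfrac{js}{2}+w)$ equals $1+(\tfrac{j}{2}-m)\Re(s)+O(1/\log X)$ where $m=\#\{i:\delta_i=1\}$, and that $\tfrac{j}{2}-m<0$ when at least two $\epsilon_i=-1$. But a negative coefficient of $\Re(s)$ means that \emph{decreasing} $\Re(s)$ (shifting left) \emph{increases} the exponent: on $\Re(s)=-\tfrac14+\delta$ with $j=3$, $m=2$ you get $X^{1+1/8+\varepsilon}$, not $X^{3/4+\varepsilon}$. The profitable move is to shift the contour to the \emph{right}. (Relatedly, your claim that ``$G_j$ vanishes at $s=0$, killing the $1/s$'' is false --- Remark~\ref{remark:zero} requires $G_j(0)=1$ --- but with a rightward shift this pole is never crossed anyway.)

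The paper argues as follows. For $M_R(-1,-1,1)$ one first writes $M_R=M'-M''$ by extending the sum over $a\mid c$ with $N(a)>Y$ to all $a\mid c$ and subtracting the piece with $N(a)\le Y$. In $M'$ the M\"obius sum $\sum_{a\mid c}\mu_{[i]}(a)$ forces $c=1$ (hence all $r_i=1$), leaving a single integral which, by Lemma~\ref{lemma:A} and the vanishing built into $G_3$ via Remark~\ref{remark:zero}, can be pushed to $\Re(s)=\tfrac12-\delta$; there $\widehat F(1-\alpha-\beta-\tfrac s2)\ll X^{3/4+\varepsilon}$ and $C_{\cdots}(l)\ll\sqrt{N(l_1)}N(l)^\varepsilon$ gives $M'\ll X^{3/4}(N(l)X)^\varepsilon$. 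In $M''$ one sets $c=ab$ and sums over the free variable $b$ (through the chain $[r_1,r_2,r_3]/(a,[r_1,r_2,r_3])\mid b$) to produce a factor $\zeta_{K,2l}(2-2\alpha-2\beta-4s)$; moving again to $\Re(s)=\tfrac12-\delta$ and bounding trivially, the finite sum $\sum_{N(a)\le Y}$ contributes the factor $Y$, yielding $M''\ll YX^{3/4}(N(l)X)^\varepsilon$. This is where the $Y$ in \eqref{eq:twonegeps} originates. The cases $M_R(-1,-1)$, $M_R(-1,1,-1)$, $M_R(1,-1,-1)$ are identical up to relabeling. For $M_R(-1,-1,-1)$ one writes $c=ab$ directly and shifts only to $\Re(s)=\tfrac16-\delta$; on that line $\widehat F(1-\alpha-\beta-\gamma-\tfrac{3s}{2})\ll X^{3/4+\varepsilon}$ while $N(ab)^{-2(1-\alpha-\beta-\gamma-3s)}=N(ab)^{-1-6\delta+o(1)}$, so the sums over $a$ (with $N(a)>Y$) and $b$ both converge absolutely, giving \eqref{eq:threenegeps} with no $Y$-loss.
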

\begin{proof}
 Since the proofs are similar, we prove \eqref{eq:twonegeps} only for $M_R(-1, -1, 1)$ here. We extend the sum over $a$ to all primary integers in $K$,
and subtract the contribution from $N(a) \leq Y$, getting $M_R(-1,-1,1) = M'(-1,-1,1) - M''(-1,-1,1)$, respectively.
To treat $M'(-1,-1,1)$, we note that the sum over $a$ becomes
$\displaystyle \sum_{\substack{ a \equiv 1 \bmod {(1+i)^3} \\ a | c}} \mu_{[i]}(a)$, which is not $0$ only when $c=1$.  This implies that $c=r_1 = r_2 =r_3 =1$ so that
\begin{align*}
M'(-1, -1, 1) =  \frac{\pi}{2 \zeta_{K,2}(2)}
\frac{1}{2\pi i} \int\limits_{(\varepsilon)} \Gamma_{\alpha + s} \Gamma_{\beta + s}
\frac{G_3(s)}{s} g_{\alpha,\beta,\gamma}(s)
\widehat{F}(1-\alpha-\beta-\tfrac{s}{2})
\frac{1}{\sqrt{N(l_1)}}
 A_{-\alpha - s, -\beta - s, \gamma + s}(l)
  ds.
\end{align*}

   In view of Lemma \ref{lemma:A} and our choice of $G_3(s)$ described in Remark \ref{remark:zero}, we can move
 the contour of integration to $\Re(s) = \half - \delta$. Using the bound
\begin{equation*}
 \widehat{F}(1-\alpha-\beta- \tfrac{s}{2}) \ll X^{1 - \tfrac{\sigma}{2}},
\end{equation*}
  we see that
\begin{equation*}
 M'(-1,-1,1) \ll N(l_1)^{-\half} N(l_1)^{\half+\varepsilon} N(l)^{\varepsilon} X^{1-1/4 + \varepsilon}.
\end{equation*}

  As for $M''(-1,-1,1)$, we write $c=ab$ and note that the condition $r_1, r_2, r_3| ab$ is equivalent to $[r_1,r_2,r_3]/(a, [r_1,r_2,r_3]) |b$. We can then write $b=kr_1,r_2,r_3]/(a, [r_1,r_2,r_3])$ with $k$ being primary and $(k, 2l)=1$. On summing over $k$ first, we obtain that
\begin{align*}
\begin{split}
& M''(-1, -1, 1) \\
=&   \frac{\pi}{2 \zeta_{K,2}(2)} \sum_{\substack{ a \equiv 1 \bmod {(1+i)^3} \\ (a,2l) = 1\\ N(a) \leq Y}}\frac{\mu_{[i]}(a)}{N(a)^{2-2\alpha-2\beta-4s}}
\frac{1}{2\pi i} \int\limits_{(\varepsilon)} \Gamma_{\alpha + s} \Gamma_{\beta + s}
\frac{G_3(s)}{s} g_{\alpha,\beta,\gamma}(s)
\widehat{F}(1-\alpha-\beta-\tfrac{s}{2})
\\
& \times \sum_{\substack{ r_1, r_2, r_3 \equiv 1 \bmod {(1+i)^3} \\ (r_1 r_2 r_3, 2l)=1}}
 \frac{\mu_{[i]}(r_1) \mu_{[i]}(r_2) \mu_{[i]}(r_3) }{N(r_1)^{\half + \alpha+s} N(r_2)^{\half + \beta+s} N(r_3)^{\half + \gamma+s}}
\frac{1}{\sqrt{N((lr_1 r_2 r_3)^*)}}
\leg{N((a,[r_1, r_2, r_3]))}{N([r_1, r_2, r_3])}^{2-2\alpha-2\beta-4s} \\
& \times \zeta_{K, 2l}(2-2\alpha-2\beta - 4s)  A_{-\alpha - s, -\beta - s, \gamma + s}(lr_1 r_2 r_3)
  ds.
\end{split}
\end{align*}

  Again we move the contour of integration to $\Re(s) = \half - \delta$ and bound everything trivially to see that
\begin{align*}
 & M''(-1,-1,1) \\
\ll & X^{3/4 + \delta/2}\sum_{\substack{ a \equiv 1 \bmod {(1+i)^3} \\ (a,2l) = 1\\ N(a) \leq Y}} N(a)^{4\delta}
\sum_{\substack{ r_1, r_2, r_3 \equiv 1 \bmod {(1+i)^3} \\ (r_1 r_2 r_3, 2l)=1}}
\frac{N(lr_1 r_2 r_3)^{\varepsilon} |\mu_{[i]}(r_1) \mu_{[i]}(r_2) \mu_{[i]}(r_3)|}{N(r_1 r_2 r_3)^{1- \delta}}
 \leg{N((a,[r_1, r_2, r_3]))}{N([r_1, r_2, r_3])}^{4\delta}.
\end{align*}
  We apply the bound $N((a, [r_1, r_2, r_3])) \leq N(a)$ and note that the sums over
$r_1, r_2, r_3$ converge absolutely for any $\delta > 0$ by taking $\varepsilon$ small enough compared to $\delta$ to see that
\begin{equation*}
 M''(-1,-1,1) \ll Y X^{3/4} (N(l) X)^{\varepsilon}.
\end{equation*}

  Lastly, we bound $M_R(-1, -1, -1)$ by writing $c=ab$ again to see that
\begin{align*}
& M_R(-1, -1, -1) \\
 =&  \frac{1}{2 \zeta_{K,2}(2)} \sum_{\substack{ a \equiv 1 \bmod {(1+i)^3} \\ (a,2l) = 1\\ N(a) > Y}}  \mu_{[i]}(a)
\frac{1}{2\pi i} \int\limits\limits_{(\varepsilon)} \Gamma_{\alpha + s, \beta + s, \gamma + s}
\frac{G_3(s)}{s} g_{\alpha,\beta,\gamma}(s)
\widehat{F}(1- \alpha-\beta-\gamma-\tfrac{3s}{2}) \\
& \times \sum_{\substack{ b \equiv 1 \bmod {(1+i)^3} \\ (b,2l)=1}}\frac{1}{N(ab)^{2(1-\alpha-\beta-\gamma-3s)}}
 \sum_{\substack{ r_1, r_2, r_3 \equiv 1 \bmod {(1+i)^3} \\ r_1, r_2, r_3 | ab}}
\frac{\mu_{[i]}(r_1) \mu_{[i]}(r_2) \mu_{[i]}(r_3) }{N(r_1)^{\half + \alpha+s} N(r_2)^{\half + \beta+s} N(r_3)^{\half + \gamma+s}} \\
& \times \frac{A_{-\alpha - s, -\beta - s, -\gamma - s}(lr_1 r_2 r_3)}{\sqrt{N((lr_1 r_2 r_3)^*)}}
  ds.
\end{align*}
 We now move the contour of integration to $\Re(s)  = \frac16 - \delta$. This leads to the desired bound given in \eqref{eq:threenegeps} by noting that
the sums over $a$ and $b$ converge absolutely.
\end{proof}

   Combining Lemma \ref{lemma:MRresult} and Lemma \ref{lemma:MRbound}, we deduce that
\begin{lemma}
\label{lemma:MRresult1}
If Theorem \ref{theo:recursive2} holds with a parameter $f > 1/2$ when $j=1,2$ and $f > 3/4$ when $j=3$, then
\begin{align*}
\begin{split}
 M_R(\alpha, l) =& M_R(1) + M_R(-1) + O(\sqrt{N(l)} \frac{X^{f + \varepsilon}}{Y^{2f-1}})+O(X^{1/2+\varepsilon}), \\
 M_R(\alpha, \beta, l) =& M_R(1,1) + M_R(1,-1) + M_R(-1,1)+O(\sqrt{N(l)} \frac{X^{f + \varepsilon}}{Y^{2f-1}})+O(Y X^{1/2} (N(l) X)^{\varepsilon}), \\
 M_R(\alpha, \beta, \gamma, l) =& M_R(1,1,1) + M_R(1,1,-1) + M_R(1,-1, 1)+M_R(-1,1,1) +O(\sqrt{N(l)} \frac{X^{f + \varepsilon}}{Y^{2f-1}})+O(Y X^{3/4} (N(l) X)^{\varepsilon}).
\end{split}
\end{align*}
\end{lemma}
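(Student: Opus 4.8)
The plan is to obtain Lemma~\ref{lemma:MRresult1} by merging the two preceding lemmas; there is essentially no new mathematics involved, only bookkeeping. Assume, as in the hypothesis, that Theorem~\ref{theo:recursive2} holds with a parameter $f>1/2$ when $j=1,2$ and $f>3/4$ when $j=3$. First I would invoke Lemma~\ref{lemma:MRresult}, which for each admissible $j$ expresses $M_R$ as $\sum_{\epsilon_1,\ldots,\epsilon_j\in\{\pm1\}}M_R(\epsilon_1,\ldots,\epsilon_j)$ up to an error of size $O(\sqrt{N(l)}\,X^{f+\varepsilon}/Y^{2f-1})+O(X^{1/2+\varepsilon})$. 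It then remains only to decide which of the $2^j$ sign patterns contribute genuine main terms and which can be absorbed into the error, using Lemma~\ref{lemma:MRbound}.

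For $j=1$ there is nothing to discard: both $M_R(1)$ and $M_R(-1)$ are retained, so the first assertion is simply the $j=1$ case of Lemma~\ref{lemma:MRresult}. For $j=2$, the only sign pattern with at least two entries equal to $-1$ is $(-1,-1)$, and by \eqref{eq:twonegeps} we have $M_R(-1,-1)\ll YX^{1/2}(N(l)X)^{\varepsilon}$; absorbing this one term, together with the leftover $O(X^{1/2+\varepsilon})$ (which is $\ll YX^{1/2}(N(l)X)^{\varepsilon}$ since $Y\geq1$), into an error of size $O(YX^{1/2}(N(l)X)^{\varepsilon})$ leaves exactly $M_R(1,1)+M_R(1,-1)+M_R(-1,1)$, which is the second assertion. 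For $j=3$, the sign patterns with at least two $-1$'s are $(-1,-1,1)$, $(-1,1,-1)$, $(1,-1,-1)$ and $(-1,-1,-1)$; by \eqref{eq:twonegeps} the first three are each $\ll YX^{3/4}(N(l)X)^{\varepsilon}$, and by \eqref{eq:threenegeps} the fourth is $\ll X^{3/4}(N(l)X)^{\varepsilon}\ll YX^{3/4}(N(l)X)^{\varepsilon}$ since $Y\geq1$. Discarding all four into an error of size $O(YX^{3/4}(N(l)X)^{\varepsilon})$ leaves $M_R(1,1,1)+M_R(1,1,-1)+M_R(1,-1,1)+M_R(-1,1,1)$, which is the third assertion. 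This completes the argument.

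Since the substantive work has already been carried out in Lemmas~\ref{lemma:MRresult} and~\ref{lemma:MRbound}, I do not anticipate any real obstacle; the step is purely organizational. The one point that needs a moment's care is to check that the bounds produced in Lemma~\ref{lemma:MRbound} really do dominate each discarded contribution (which they do, by those very estimates), and to keep the $Y$-dependence of the discarded terms explicit and linear in $Y$. This matters because it is precisely the competition between this linear-in-$Y$ error and the $\sqrt{N(l)}\,X^{f+\varepsilon}/Y^{2f-1}$ term that will be balanced by a suitable choice of $Y$ in the next stage to produce the improved exponents claimed in Theorem~\ref{theo:recursive2}.
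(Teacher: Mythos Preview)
Your proposal is correct and follows exactly the paper's approach: the paper itself deduces Lemma~\ref{lemma:MRresult1} in one line by ``Combining Lemma~\ref{lemma:MRresult} and Lemma~\ref{lemma:MRbound}'', and you have simply spelled out the bookkeeping of which sign patterns are retained and which are absorbed using \eqref{eq:twonegeps} and \eqref{eq:threenegeps}. Your observation that the leftover $O(X^{1/2+\varepsilon})$ is dominated by the new $Y$-dependent error term (since $Y\geq 1$) is the only small point one needs to check, and you handle it correctly.
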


\subsection{Computing $M_N$: applying Poisson summation}
\label{section:MN}

  We recall that
\begin{equation*}
M_N =  \sum_{\substack{ a \equiv 1 \bmod {(1+i)^3} \\(a,2l)=1 \\ N(a) \leq Y}}  \mu_{[i]}(a)  \sum_{\substack{ n \equiv 1 \bmod {(1+i)^3} \\(n,2a)=1 }}
 \frac{\leg{(1+i)^5}{nl}\sigma_{(\alpha_j)}(n) }{N(n)^{\frac{1}{2}}} \sum_{(d,2)=1} \leg{d}{nl} \Phi\left(\frac{N(d a^2)}{X}\right)
V_{(\alpha_j)} \left(\frac{N(n)}{N(a^2d)^{j/2}}\right).
\end{equation*}

  We now apply the Poisson summation formula given in Lemma \ref{Poissonsumformodd} to see that
\begin{align*}
 & \sum_{(d,2)=1} \leg{d}{nl} \Phi\left(\frac{N(d a^2)}{X}\right)
V_{(\alpha_j)}  \left(\frac{N(n)}{N(a^2d)^{j/2}}\right)
=  \frac {X}{2N(a^2nl)} \leg {1+i}{nl} \sum_{k \in
   \mz[i]}(-1)^{N(k)}g(k, nl)\widetilde{F_{n,j}}\left(\sqrt{\frac {N(k)X}{2N(a^2nl)}}\right),
\end{align*}
   where $F_{n,j}(t)$ is given in \eqref{Fn}. We then deduce that
\begin{align*}
 M_N = \frac{X}{2}  \sum_{\substack{ a \equiv 1 \bmod {(1+i)^3} \\(a,2l)=1 \\ N(a) \leq Y}}  \frac{\mu_{[i]}(a)}{N(a)^2}
\sum_{\substack{ n \equiv 1 \bmod {(1+i)^3} \\ (n,2a)=1}}
\frac{\sigma_{(\alpha_j)}(n)}{N(n)^{\thalf}}  \sum_{k \in
   \mz[i]}(-1)^{N(k)} \frac{g(k, ln)}{N(ln)}
\widetilde{F_{n,j}}\left(\sqrt{\frac {N(k)X}{2N(a^2nl)}}\right).
\end{align*}

Now we write $M_N = M_N(k=0) + M_N(k \neq 0)$, where $M_N(k=0)$ corresponds to the term with $k=0$.

\subsection{Computing $M_N$: the term $M_N(k=0)$}
Note that by Lemma \ref{Gausssum} we have $g(0,n)=\varphi_{[i]}(ln)$ if $ln = \square$
(i.e. $n = l_1 \square$), and $0$ otherwise.   Thus we get
\begin{align*}
  M_N(k=0)=& \frac {X}{2} \sum_{\substack{ a \equiv 1 \bmod {(1+i)^3} \\(a,2l)=1 \\ N(a) \leq Y}} \frac {\mu_{[i]}(a)}{N(a)^2} \sum_{\substack{ n \equiv 1 \bmod {(1+i)^3} \\ (n,2a)=1}}
\frac{\sigma_{(\alpha_j)}(l_1n^2)}{N(l_1n^2)^{\frac 12}}\frac{\varphi_{[i]}(ln)}{N(ln)}\widetilde{F_{n^2l_1}}\left(0\right).
\end{align*}

    We then apply \eqref{F0} to deduce that
\begin{align*}
  M_N(k=0)=& \frac {\pi}{2} \sum_{\substack{ a \equiv 1 \bmod {(1+i)^3} \\(a,2l)=1 \\ N(a) \leq Y}} \frac {\mu_{[i]}(a)}{N(a)^2} \times
\frac {1}{2\pi
   i} \int\limits\limits_{(2)} g_{(\alpha_j)}(s)
 \widehat{F}(1+\frac {js}2)D_N(k=0;s)  \frac {G_j(s)ds}{s},
\end{align*}
  where
\begin{equation*}
 D_N(k=0;s) = \sum_{\substack{ n \equiv 1 \bmod {(1+i)^3} \\ (n,a)=1}}
\frac{\sigma_{(\alpha_j)}(l_1n^2)}{N(l_1n^2)^{\frac 12 +s}}\frac{\phi_{[i]}(ln)}{N(ln)}.
\end{equation*}

\begin{lemma}
\label{lemma:MNk0}
 For special choices of $G_j(s), 1 \leq j \leq 3$ described in Remark \ref{remark:zero}, we have for $j=1$, $l$ primary and square-free,
\begin{align}
\label{eq:right}
M_{N}(k=0) + M_R(1) =  \frac {\pi }{2\zeta_{K,2}(2)\sqrt{N(l)}}
\frac{1}{2\pi i} \int_{(\varepsilon)} \widehat{F}(1 + \tfrac{s}{2})\frac{G_1(s)}{s} g_{\alpha}(s) A_{\alpha+s}(l)  ds.
\end{align}
  For $j=2,3$ and a general primary $l$,
\begin{equation}
\label{eq:MNk0andMR111}
 M_N(k=0) + M_R(1,\cdots, 1) = \pi A_{\alpha_1, \cdots, \alpha_j}(l) \frac{\widehat{F}(1)}{2 \zeta_{K,2}(2) \sqrt{N(l_1)}} + O(X^{(1-\frac j{4}) + \varepsilon} N(l)^{\varepsilon}).
\end{equation}
\end{lemma}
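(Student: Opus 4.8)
The plan is to combine the $k=0$ term of the Poisson-summed main piece $M_N(k=0)$ with the ``$+1$'' main term $M_R(1,\dots,1)$ coming from the recursion, showing that the two nearly telescope, leaving only the clean main term plus an acceptable error. The first step is to identify the Dirichlet series $D_N(k=0;s)$ explicitly. Using $\varphi_{[i]}(ln)/N(ln) = \prod_{\varpi|ln}(1-N(\varpi)^{-1})$ and the multiplicativity of $\sigma_{(\alpha_j)}$, one rewrites the sum over $n$ (coprime to $2a$) as an Euler product, pulling out the constraint $\varpi\nmid a$. Comparing with the definition of $A_{\alpha_1,\dots,\alpha_j}(l)$, one finds that $D_N(k=0;s)$ equals $\zeta_{K,2}(1+2s)^{-1}$—or rather $(\zeta_{K,2}(2)\sqrt{N(l_1)})^{-1}$ after accounting for the shift in $\sigma$—times $A_{\alpha_1+s,\dots,\alpha_j+s}(l)$, up to a finite correction factor supported on primes dividing $a$. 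Concretely, the local factors at $\varpi\nmid 2la$ already match those in $A$, while the local factors at $\varpi|a$ are $1+O(N(\varpi)^{-1})$, so $D_N(k=0;s) = \frac{1}{\zeta_{K,2}(2)\sqrt{N(l_1)}} A_{\alpha_1+s,\dots,\alpha_j+s}(l)\, R_a(s)$ where $R_a(s)$ is a short product over $\varpi|a$ that is $1$ when $a=1$.

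Next I would move the contour in the integral representation of $M_N(k=0)$ from $\Re(s)=2$ down to $\Re(s)=\varepsilon$. By Lemma \ref{lemma:A} (with the $d_{a,b}=d_{a,b,c}=1$ observation for $a+b=1$), the only pole of $A_{\alpha_1+s,\dots,\alpha_j+s}(l)$ in the strip $0<\Re(s)\le 2$ comes from the $\zeta_{K,2}(1+2\alpha_i+2s)$ factors, but our choice of $G_j$ in Remark \ref{remark:zero} kills exactly these, so no residue is picked up. On the new line $\Re(s)=\varepsilon$ the factor $\widehat F(1+\tfrac{js}{2}) = X^{1+js/2}\widehat\Phi(1+\tfrac{js}{2})$ has size $X^{1+j\varepsilon/2}$ and the remaining integrand decays rapidly in $\Im(s)$, so the whole thing is $O(X^{1+\varepsilon})$—but this is too crude, so one must instead track it against $M_R(1,\dots,1)$.

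Now I turn to $M_R(1,\dots,1)$ as given in \eqref{eq:MRmainterms}: with all $\epsilon_i=+1$ we have $\delta_i=0$, so $w=1$, the $\Gamma$-factors are trivial, and $\widehat F(\tfrac{js}{2}+w)=\widehat F(1+\tfrac{js}{2})$—matching the integrand of $M_N(k=0)$. After writing $c=ab$ and summing the $r_i$ over divisors of $c$, the key algebraic identity is that for $a=1$ one has $c=1$, hence $r_i=1$, recovering precisely the $a=1$ term of $M_N(k=0)$ but with opposite effective contribution: the sum $\sum_{a|c}\mu_{[i]}(a)$ structure means that $M_N(k=0)$'s sum over $N(a)\le Y$ and $M_R(1,\dots,1)$'s sum over $N(a)>Y$ combine to a sum over all primary $a$, which by Möbius and the $(a,2l)=1$ coprimality collapses. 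For $j=1$ and $l$ square-free this collapse is exact and yields \eqref{eq:right} directly. For $j=2,3$ the combination is not exact because of the $r_i$-twist in $A_{\alpha+s,\dots}(lr_1\cdots r_j)$ versus $A_{\alpha,\dots}(l)$; here one sets $s\to 0$ in the leading term (legitimate since $\widehat F(1)$ is the value at $s=0$ after the residue-free contour shift all the way to $\Re(s)=-\delta$ for small $\delta>0$, using Remark \ref{remark:zero} to ensure $G_j$ divisibility handles denominator $\zeta_K$'s), picks up the main term $\pi A_{\alpha_1,\dots,\alpha_j}(l)\widehat F(1)/(2\zeta_{K,2}(2)\sqrt{N(l_1)})$ from the pole of $g_{(\alpha_j)}(s)G_j(s)/s$ at $s=0$, and bounds the shifted integral on $\Re(s)=-\tfrac12+\delta$—where $\widehat F(1+\tfrac{js}{2})\ll X^{1-j/4+\varepsilon}$—together with the sum over $a$ (absolutely convergent) and $r_i$ (absolutely convergent for small $\delta$) to get the claimed $O(X^{(1-j/4)+\varepsilon}N(l)^\varepsilon)$.

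\textbf{The main obstacle} I anticipate is the bookkeeping in the $j=2,3$ case: one must verify that after extending the $a$-sum to all primary integers and applying Möbius, the twisted Euler products $A_{\alpha_i+s}(lr_1\cdots r_j)$ reorganize so that the residue at $s=0$ produces exactly $A_{\alpha_1,\dots,\alpha_j}(l)$ (untwisted)—this requires the identity, essentially built into the definitions of $A$ and $\sigma$, that summing $\prod_i \mu_{[i]}(r_i)N(r_i)^{-1/2-\alpha_i}\cdot N((lr_1\cdots r_j)^*)^{-1/2}A_{\dots}(lr_1\cdots r_j)$ over $r_i$ coprime to $2l$ recovers $N(l_1)^{-1/2}A_{\alpha_1,\dots,\alpha_j}(l)$ times the local completion factors, and checking this compatibility at the primes dividing $l$ (where $l_1$ versus $l^*$ and the square/square-free splitting interact) is where the delicate local analysis lives. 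The analogous identity in \cite{Young1, Sono} for Dirichlet $L$-functions is the template, and the Gaussian-field version should go through with the primary-element conventions already fixed, but the residue computation at $s=0$ with the $G_j$-vanishing conditions of Remark \ref{remark:zero} must be checked to ensure no spurious poles contribute.
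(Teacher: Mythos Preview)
Your approach is essentially the same as the paper's: identify the Dirichlet series $D_N(k=0;s)$, prove the Euler-product identity $D_N(k=0;s)=D_R(1,\dots,1;s)$ (which you correctly flag as the crux, and which the paper dispatches by citing the analogous computations in Young and Sono), combine the $N(a)\le Y$ and $N(a)>Y$ sums into a full sum over primary $a$, and then for $j=2,3$ shift the contour to $\Re(s)=-\tfrac12+\varepsilon$, picking up the residue at $s=0$.

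One clarification: your statement that for $j=2,3$ ``the combination is not exact because of the $r_i$-twist'' is a misconception. The paper shows that $D_N(k=0;s)=D_R(1,\dots,1;s)$ holds \emph{exactly} for each $a$ and for all $j$; the $r_i$-sums inside $D_R$ and the $b$-sum reorganize via the Euler product to match $D_N$ on the nose. The only difference between $j=1$ and $j=2,3$ in the final statement is cosmetic: for $j=1$ the lemma records the integral on $\Re(s)=\varepsilon$ as is, while for $j=2,3$ one shifts further to $\Re(s)=-\tfrac12+\varepsilon$ (legitimate by Lemma~\ref{lemma:A} and Remark~\ref{remark:zero}), collects the residue at $s=0$ as the main term, and bounds the new line by $\widehat F(1+\tfrac{js}{2})\ll X^{1-j/4+\varepsilon}$. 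So the ``obstacle'' you anticipate about reconciling $A_{\dots}(lr_1\cdots r_j)$ with $A_{\dots}(l)$ at the residue is already absorbed into the identity $D_N=D_R$, not deferred to the residue step.
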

\begin{proof}
  The expression given in \eqref{eq:right} can be established by proceeding similarly to the treatment in Section 6.2 of \cite{Young1}. To prove \eqref{eq:MNk0andMR111}, we use the expression \eqref{eq:MRmainterms} by writing $c=ab$ there to see that
\begin{equation*}
 M_R(1,\cdots,1) =  \frac {\pi}{2} \sum_{\substack{ a \equiv 1 \bmod {(1+i)^3} \\(a,2l)=1 \\ N(a) > Y}} \frac {\mu_{[i]}(a)}{N(a)^2} \times
\frac {1}{2\pi
   i} \int\limits\limits_{(2)} g_{\alpha_1, \cdots, \alpha_j}(s)
 \widehat{F}(1+\frac {3s}2)D_R(1, \cdots, 1;s)  \frac {G_j(s)ds}{s},
\end{equation*}
where
\begin{align*}
 D_R(1,\cdots, 1;s) = &  \frac{1}{\zeta_{K,2}(2)}  \sum_{\substack{ b \equiv 1 \bmod {(1+i)^3} \\ (b,2l)=1}}\frac{1}{N(b)^2}\sum_{\substack{ r_i \equiv 1 \bmod {(1+i)^3} \\ 1 \leq i \leq 3 \\ r_i | c}}
\prod^j_{i=1}\frac{\mu_{[i]}(r_i)}{N(r_i)^{\half + \alpha_i+s}}
\\
& \times \sum_{\substack{ n\equiv 1 \bmod {(1+i)^3} \\ (n,2) = 1}} \frac{\sigma_{\alpha_1, \cdots, \alpha_j}((l\prod^j_{i=1}r_i)^* n^2)}{N((l \prod^j_{i=1}r_i)^* n^2)^{\frac12 + s}} \prod_{\substack{ \varpi \equiv 1 \bmod {(1+i)^3} \\ \varpi | n l \prod^j_{i=1}r_i }} (1+N(\varpi)^{-1})^{-1}.
\end{align*}

 Now the arguments given in \cite[Section 6.2]{Young1}, \cite[Section 6.1]{Young2} and the proof of \cite[Lemma 4.4]{Sono} carry over to our case
with simple modifications to show that we have
\begin{equation*}
 D_N(k=0;s) = D_R(1, \cdots, 1;s).
\end{equation*}

  We then conclude that
\begin{align*}
 M_N(k=0) + M_R(1,\cdots,1) = \frac {\pi}{2} \sum_{\substack{ a \equiv 1 \bmod {(1+i)^3} \\(a,2l)=1}} \frac {\mu_{[i]}(a)}{N(a)^2}
\frac {1}{2\pi
   i} \int\limits\limits_{(2)} g_{\alpha_1, \cdots, \alpha_j}(s)
 \widehat{F}(1+\frac {js}2)D_N(k=0;s)  \frac {G_j(s)ds}{s}.
\end{align*}

  It follows from Lemma \ref{lemma:A} and Remark \ref{remark:zero} that we can move the contour of integration to $-1/2 + \varepsilon$ to cross a pole at $s=0$ only in the process.  The residue at $s=0$ gives the desired main term and the error term is easily estimated to be of the desired size.
\end{proof}

\subsection{Computing $M_N$: the term $M_N(k \neq 0)$}

    Using the expression given in \eqref{Fnexprssion} for $\widetilde{F_{n,j}}$, we see that
\begin{align*}
 M_N(k \neq 0) =& \frac{X}{2}  \sum_{\substack{ a \equiv 1 \bmod {(1+i)^3} \\(a,2l)=1 \\ N(a) \leq Y}} \frac{\mu_{[i]}(a)}{N(a)^2}  \frac{\sigma_{\alpha_1,\cdots,\alpha_j}(n)}{N(n)^{\thalf}}  \sum_{\substack {k \in
   \mz[i] \\ k \neq 0}}(-1)^{N(k)} \frac{g(k, ln)}{N(ln)}
\widetilde{F_{n,j}}\left(\sqrt{\frac {N(k)X}{2N(a^2nl)}}\right) \\
=& \frac{\pi}{2}  \sum_{\substack{ a \equiv 1 \bmod {(1+i)^3} \\(a,2l)=1 \\ N(a) \leq Y}} \frac{\mu_{[i]}(a)}{N(a)^2} \frac {1}{(2\pi i)^2}\int\limits\limits_{(c_u)}
\int\limits\limits_{(c_s)}\widehat{F}(1+u)
g_{\alpha_1, \cdots, \alpha_j}(s)  \left(\pi \left(\sqrt{\frac {1}{2N(a^2l)}}\right) \right )^{-(js-2u)}\frac{\Gamma \leg{js-2u}{2}}{\Gamma\leg{2-js+2u}{2}}  \\
& \times \sum_{\substack{ n \equiv 1 \bmod {(1+i)^3} \\ (n,a)=1}}
 \sum_{\substack {k \in
   \mz[i] \\ k \neq 0}}(-1)^{N(k)} \frac{\sigma_{\alpha_1,\cdots,\alpha_j}(n)}{N(n)^{\thalf +(1-j/2)s+u}}\frac{1}{N(k)^{js/2-u}} \frac{g(k, ln)}{N(ln)}du \frac {G_j(s)ds}{s},
\end{align*}
  where we set $c_s=c_u>3$ satisfying $c_s-c_u>2$.

  Now, we let $f(k)=g(k,n)/N(k)^s$ and we write $k = k_1k^2_2$ with $k_1$ square-free and $k_2 \in G$, where we recall here that $G$ is the set of generators of all ideals in $\mathcal{O}_K$ defined in Section \ref{sect: alybehv}. We break the sum over $k_1$ into two sums, depending on $(k_1, 1+i)=1$ or not, to get
\begin{align*}
   \sum_{\substack {k \in
   \mz[i] \\ k \neq 0}}(-1)^{N(k)} f(k)= & \sumstar_{\substack{k_1 \\ (k_1, 1+i) \neq 1}}\sum_{k_2}f(k_1k^2_2)+\sumstar_{\substack{k_1 \\ (k_1, 1+i) = 1}}\sum_{k_2}(-1)^{N(k_2)}f(k_1k^2_2)  \\
   = & \sumstar_{\substack{k_1 \\ (k_1, 1+i) \neq 1}}\sum_{k_2}f(k_1k^2_2)+\sumstar_{\substack{k_1 \\ (k_1, 1+i) = 1}}
   \left (2\sum_{k_2}f(2k_1k^2_2)-\sum_{k_2}f(k_1k^2_2) \right ),
\end{align*}
  where we note that $(1+i)$ is the only prime ideal in $\mathcal{O}_K$  that lies above the integral ideal $(2) \in \mz$.

    Note that when $(n, 1+i)=1$, $g(k,n)=g(2k,n)$ by Lemma \ref{Gausssum}. It follows that we have $f(2k_1k^2_2)=4^{-s}f(k_1k^2_2)$ so that
\begin{align*}
   \sum_{\substack {k \in
   \mz[i] \\ k \neq 0}}(-1)^{N(k)} f(k)= (2^{1-2s}-1)\sumstar_{\substack{k_1 \\ (k_1, 1+i) = 1}}
   \sum_{k_2}f(k_1k^2_2) +\sumstar_{\substack{k_1 \\ (k_1, 1+i) \neq 1}}\sum_{k_2}f(k_1k^2_2).
\end{align*}

    We apply the above expression to recast $M_N(k \neq 0)$ as
\begin{align*}
    M_N(k \neq 0)  &=\frac {\pi}{2}  \left (  \ \sumstar_{\substack{k_1 \\ (k_1, 1+i) = 1}} \frac {1}{N(k_1)^{js/2-u}}
 \mathcal{M}_{1}(s,u,k_1,l)+ \sumstar_{\substack{k_1 \\ (k_1, 1+i) \neq 1}}\frac {1}{N(k_1)^{js/2-u}}\mathcal{M}_{2}(s,u, k_1,l)\right ),
\end{align*}
    where
\begin{align}
\label{eq:MNpreDirichlet}
\begin{split}
 \mathcal{M}_{1}(s,u, k_1,l) =&  \sum_{\substack{ a \equiv 1 \bmod {(1+i)^3} \\(a,2l)=1 \\ N(a) \leq Y}} \frac{\mu_{[i]}(a)}{N(a)^2} \frac {1}{(2\pi i)^2}\int\limits\limits_{(c_u)}
\int\limits\limits_{(c_s)}\widehat{F}(1+u)
g_{\alpha_1, \cdots, \alpha_j}(s)  \left( \pi \left(\sqrt{\frac {1}{2N(a^2l)}}\right) \right )^{-(js-2u)}\\
&\times (2^{1-2(js/2-u)}-1)  \frac{\Gamma \leg{js-2u}{2}}{\Gamma\leg{2-js+2u}{2}}
J_{k_1,j}(js-2u, \frac 12+(1-\frac j2)s+u; l,a)du \frac {G_j(s)ds}{s},
\end{split}
\end{align}
and $J_{k_1,j}(v,w;l, a)$ is defined in \eqref{eq:J}. The formula for $\mathcal{M}_{2}(s,u,k_1,l)$ is identical to \eqref{eq:MNpreDirichlet} except that the factor $2^{1-2(js/2-u)} - 1$ is omitted.

  We move the contours to $c_s = \half + \varepsilon$ and $c_u =\frac {j}{4}-1$ retaining the relation $jc_s-2u>2$. In view of Lemma \ref{lemma:Jprop},  $J_{k_1,j}$ remains analytic in the process.  Again it follows from Lemma \ref{lemma:Jprop} and Remark \ref{remark:zero} that we cross poles of the Hecke $L$-functions at $u=-(1-\frac{j}{2})s - \alpha_i, 1 \leq i \leq j$ for $k_1 =\pm i$  only. For each $\alpha_i, 1 \leq i \leq j$, we denote $M_N(k_1 = \pm i, \alpha_i), 1 \leq i \leq 3$ for the contribution to $M_N(k_1 \neq 0)$ from the sums of the two residues corresponding to $k_1=\pm i$. Note further that by Lemma \ref{Gausssum} and \eqref{eq:J} that we have
$J_{i,j}(v,w;l, a)=J_{-i,j}(v,w;l, a)$ so that we shall denote $J_{j}(v,w;l, a)$ for $J_{i,j}(v,w;l, a)$ or $J_{-i,j}(v,w;l, a)$ from now on. Using this notation, we have
\begin{align}
\label{eq:MNksquarealpha}
\begin{split}
 M_N(k_1=\pm i, \alpha_i) = & \pi \sum_{\substack{ a \equiv 1 \bmod {(1+i)^3} \\(a,2l)=1 \\ N(a) \leq Y}} \frac{\mu_{[i]}(a)}{N(a)^2}
\frac {1}{2\pi i}
\int\limits\limits_{(c_s)}\widehat{F}(1 -(1-\frac{j}{2})s  - \alpha_i)
g_{\alpha_1, \cdots, \alpha_j}(s) \left( \pi \left(\sqrt{\frac {1}{2N(a^2l)}}\right) \right )^{-2(s + \alpha_i)}  \\
& \times (2^{1-2(s + \alpha_i)}-1)\frac{\Gamma (s+\alpha_i)}{\Gamma (1-s-\alpha_i)}
 \text{Res}_{w=\frac{1}{2} - \alpha_i} J_j(2s + 2\alpha_i, w;l,a) \frac {G_j(s)ds}{s} .
\end{split}
\end{align}

On the new lines of integration,  we argue as in Section 5.3 of \cite{Young2} using the following analogue estimation  of \cite[(4.1)]{G&Zhao3} for the second moment such that when $|\Re(\alpha)| \ll (\log{X})^{-1}$,
\begin{align*}
    \sumstar_{\substack{(d,2)=1 \\ N(d) \leq X }} \left| L_{2al}(\thalf+\alpha, \chi_{(1+i)^5d})  \right|^2 \ll N(al)^{\varepsilon}\left( X(1+|\Im(\alpha)|) \right)^{1+\varepsilon}
\end{align*}
  to see that
the sum over $k_1$ converges absolutely on these lines of integration and that with our choices of $c_u$ and $c_s$, the contribution to $M_N$ from these error terms is
\begin{equation*}
 \ll \sum_{N(a) \leq Y} N(a)^{-2} N(l a^2)^{1 + \varepsilon} N(l)^{-\half + \varepsilon} X^{j/4 + \varepsilon}
\ll N(l)^{1/2 + \varepsilon} Y X^{j/4 + \varepsilon}.
\end{equation*}

   We then conclude from the above that
\begin{align}
\label{MNknot0}
  M_N(k \neq 0) = \sum^j_{i=1}M_N(k_1=\pm i, \alpha_i)  +O(X^{j/4 + \varepsilon} Y N(l)^{1/2 + \varepsilon}).
\end{align}

\subsection{Computing $M_N$:  gathering terms}

   In this section we show that for any fixed $i$, the term $M_{\pm N}(k_1=\pm i, \alpha_i)$ combines naturally with the term $M_{\pm R}(\epsilon_1, \cdots, \epsilon_j)$, where we have $\epsilon_i=-1$ and $\epsilon_k=1$ for all $k \neq i$.  As a preparation, we first establish an Archimedean-type identity.
\begin{lemma}
\label{lemma:archcalc}
 Let $u$ be a complex number.  Then
\begin{align*}
 (2^{1-u} -1)\zeta_K(u) \frac {\Gamma(\frac {u}2)}{\Gamma(1-\frac {u}2)}
 = \frac {4}{\pi} \left ( \frac {\pi^2}{2} \right )^{u/2} \Gamma_{u/2}\zeta_{K,2}(1-u)
\end{align*}
where $\Gamma_u$ is defined by \eqref{gamma}.
\end{lemma}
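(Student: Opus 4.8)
The plan is to deduce the identity from the functional equation of the Dedekind zeta function $\zeta_K$ together with Legendre's duplication formula for $\Gamma$, after first clearing the factor contributed by the removed Euler factor at $(1+i)$.

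I would begin by recording three facts. Since $2$ ramifies in $K$, with $(1+i)$ the unique prime above $2$ and $N(1+i)=2$, the Euler factor of $\zeta_K$ at $(1+i)$ is $(1-2^{-s})^{-1}$, so that $\zeta_{K,2}(s) = (1-2^{-s})\zeta_K(s)$. Next, because $K$ has $r_1=0$, $r_2=1$ and $|D_K|=4$, the completed zeta function $\Lambda_K(s) = 4^{s/2}(2\pi)^{-s}\Gamma(s)\zeta_K(s) = \pi^{-s}\Gamma(s)\zeta_K(s)$ satisfies $\Lambda_K(s)=\Lambda_K(1-s)$ (this is also immediate from the factorization of $\zeta_K$ as a product of $\zeta$ and the $L$-function of the nontrivial character modulo $4$, or from \eqref{fneqn} with the trivial character); equivalently,
\[
\zeta_K(u) = \pi^{2u-1}\,\frac{\Gamma(1-u)}{\Gamma(u)}\,\zeta_K(1-u).
\]
Finally I will use $\Gamma(z)\Gamma(z+\tfrac12) = 2^{1-2z}\sqrt{\pi}\,\Gamma(2z)$.

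The next step is a reduction. One has the elementary identity $2^{1-u}-1 = 2^{1-u}(1-2^{u-1})$, whereas $\zeta_{K,2}(1-u) = (1-2^{u-1})\zeta_K(1-u)$; cancelling the common factor $1-2^{u-1}$ (as meromorphic functions), it suffices to prove
\[
2^{1-u}\,\zeta_K(u)\,\frac{\Gamma(u/2)}{\Gamma(1-u/2)} = \frac{4}{\pi}\Big(\frac{\pi^2}{2}\Big)^{u/2}\Gamma_{u/2}\,\zeta_K(1-u).
\]
Now insert the functional equation for $\zeta_K(u)$ on the left and cancel $\zeta_K(1-u)$ from both sides; this leaves a pure $\Gamma$-and-exponential identity. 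Expanding $\Gamma_{u/2} = (32/\pi^2)^{-u/2}\Gamma(\tfrac12-\tfrac u2)/\Gamma(\tfrac12+\tfrac u2)$ by \eqref{gamma}, and using the duplication formula with $z=\tfrac12-\tfrac u2$ to write $\Gamma(1-u) = \Gamma(\tfrac12-\tfrac u2)\Gamma(1-\tfrac u2)/(2^u\sqrt{\pi})$ and with $z=\tfrac u2$ to write $\Gamma(u) = \Gamma(\tfrac u2)\Gamma(\tfrac12+\tfrac u2)/(2^{1-u}\sqrt{\pi})$, the factors $\Gamma(u/2)$, $\Gamma(1-u/2)$ and $\sqrt{\pi}$ all cancel, and both sides are seen to equal $4\cdot 8^{-u}\pi^{2u-1}\,\Gamma(\tfrac12-\tfrac u2)/\Gamma(\tfrac12+\tfrac u2)$, using $(\pi^2/2)^{u/2}(32/\pi^2)^{-u/2} = (\pi^4/64)^{u/2} = \pi^{2u}8^{-u}$.

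There is no genuine obstacle; the work is purely in bookkeeping the Archimedean constants. The only points deserving attention are that the completed $\zeta_K$ carries $\pi^{-s}$ rather than $(2\pi)^{-s}$ (because $4^{s/2}(2\pi)^{-s}=\pi^{-s}$ when $|D_K|=4$), and that the constant $32/\pi^2$ in the definition \eqref{gamma} of $\Gamma_\alpha$ is precisely what makes the powers of $2$ and $\pi$ balance after the two uses of the duplication formula; all cancellations of $\zeta_K$-factors and of $1-2^{u-1}$ are to be read as identities of meromorphic functions, extended to the excluded points (notably $u=0,1$) by analytic continuation.
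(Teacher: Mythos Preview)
Your proof is correct and follows essentially the same approach as the paper: both use the functional equation $\pi^{-u}\zeta_K(u)=\pi^{-(1-u)}\frac{\Gamma(1-u)}{\Gamma(u)}\zeta_K(1-u)$, Legendre's duplication formula (applied twice, at $z=u/2$ and $z=(1-u)/2$), and the identity $(2^{1-u}-1)\zeta_K(1-u)=2^{1-u}\zeta_{K,2}(1-u)$. The only cosmetic difference is ordering: you cancel the factor $1-2^{u-1}$ at the outset, whereas the paper carries $(2^{1-u}-1)$ through and absorbs it into $\zeta_{K,2}$ at the last step.
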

\begin{proof}
   We use the functional equation \eqref{fcneqnL} for $\zeta_K(u)$  to see that
\begin{align*}
   \pi^{-u}\zeta_K(u) = \pi^{-(1-u)} \frac {\Gamma(1 -u)}{\Gamma(u)} \zeta_K(1-u).
\end{align*}

   Next, we apply the formula (see \cite[Formula 3, Section 8.335]{GR}):
\begin{align*}
 \Gamma(\frac {u}2)\Gamma (\frac {1+u}{2})=\frac {\sqrt{\pi}}{2^{u-1}}\Gamma(u)
\end{align*}
   to see that
\begin{align*}
  \frac {\Gamma(1 -u)}{\Gamma(u)}\frac {\Gamma(\frac {u}2)}{\Gamma(1-\frac {u}2)}
=\frac {2^{1-u}}{2^{u}}\frac {\Gamma(\frac {1-u}{2})}{\Gamma(\frac {1+u}{2})}.
\end{align*}

   Note also that
\begin{align*}
 (2^{1-u} -1) \zeta_K(1-u) = 2^{1-u} \zeta_{K,2}(1-u).
\end{align*}

  From this we obtain that
\begin{align*}
 (2^{1-u} -1)\zeta_K(u) \frac {\Gamma(\frac {u}2)}{\Gamma(1-\frac {u}2)} =& \pi^{-(1-2u)}\frac {2^{2(1-u)}}{2^u} \frac {\Gamma(\frac {1-u}{2})}{\Gamma(\frac {1+u}{2})}\zeta_{K,2}(1-u)
 = \frac {4}{\pi} \left ( \frac {\pi^2}{2} \right )^{u/2} \Gamma_{u/2}\zeta_{K,2}(1-u),
\end{align*}
  as desired.
\end{proof}

   Now we are ready to prove the next result.
\begin{lemma}
\label{lemma:MNk0andMR111}
For special choices of $G_j(s), 1 \leq j \leq 3$ described in Remark \ref{remark:zero}, we have
\begin{align}
\label{eq:MT1}
 & M_N(k=0) + M_{-N}(k_1=\pm i, \alpha) + M_{R}(-1) + M_{-R}(1) = \frac{\pi  \widehat{F}(1)}{2 \zeta_{K,2}(2)} N(l)^{-1/2}A_{\alpha}(l), \\
\label{eq:MNksquareandMR-11}
 & M_N(k_1=\pm i, \alpha) + M_R(-1,1)+ M_{-N}(k_1=\pm i, \beta) + M_{-R}(-1,1)= \pi A_{- \alpha, \beta}(l) \Gamma_{\alpha} \frac{ \widehat{F}(1- \alpha )}{2 \zeta_{K,2}(2) \sqrt{N(l_1)}},  \\
\label{eq:MNksquareandMR-111}
 & M_N(k_1=\pm i, \alpha) + M_R(-1,1,1) = \pi A_{- \alpha, \beta,\gamma}(l) \Gamma_{\alpha} \frac{ \widehat{F}(1- \alpha )}{2 \zeta_{K,2}(2) \sqrt{N(l_1)}} +
O(X^{3/4 + \varepsilon} N(l)^{\varepsilon}).
\end{align}
 The relations given in \eqref{eq:MNksquareandMR-11} and \eqref{eq:MNksquareandMR-111} are valid similarly if one replaces $\alpha$ by $\beta$ or by $\gamma$ (when $j=3$).
\end{lemma}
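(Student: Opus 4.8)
The plan is to verify that the singular and recursive contributions assemble exactly into the claimed main terms of Theorem~\ref{theo:recursive2}, one identity at a time, by matching Dirichlet series coefficient-by-coefficient via their Euler products. I would begin with \eqref{eq:MT1}, the $j=1$ case, since there $l$ is square-free and the combinatorics is lightest. The key observation is that $M_N(k=0)+M_R(1)$ has already been evaluated in \eqref{eq:right} of Lemma~\ref{lemma:MNk0}, so the whole of the left side of \eqref{eq:MT1} equals that integral plus $M_{-N}(k_1=\pm i,\alpha)+M_{-R}(1)$. By Remark~\ref{remark:M1toM2}, the sum $M_{-N}(k_1=\pm i,\alpha)+M_{-R}(1)$ is obtained from $M_N(k_1=\pm i,\alpha)+M_R(-1)$ by swapping $\alpha\mapsto-\alpha$, replacing $F$ by $F_{-\alpha}$, and multiplying by $\Gamma_\alpha$; hence it suffices to combine $M_N(k_1=\pm i,\alpha)$, given by \eqref{eq:MNksquarealpha}, with the single main term $M_R(-1)$ from \eqref{eq:MRmainterms}. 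In $M_R(-1)$ I would extend the sum over $a$ from $N(a)>Y$ to all primary $a$ (the complementary range being absorbed into the error, exactly as in the proof of Lemma~\ref{lemma:MRbound}); for $j=1$ the sum $\sum_{a|c}\mu_{[i]}(a)$ then forces $c=r_1=1$, collapsing $M_R(-1)$ to a clean integral over $s$.

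The heart of the matter is then to show that, after moving the contour in \eqref{eq:MNksquarealpha} to bring the pole of $G_1(s)/s$ at $s=0$ into play and using Lemma~\ref{lemma:Jprop} to write $\mathrm{Res}_{w=1/2-\alpha}J_1(2s+2\alpha,w;l,a)$ in terms of $\zeta_{K,2l}$ and an analytic factor, the Archimedean factors $(2^{1-2(s+\alpha)}-1)\Gamma(s+\alpha)/\Gamma(1-s-\alpha)$ together with a copy of $\zeta_K$ convert into the shape $\tfrac{4}{\pi}(\pi^2/2)^{(s+\alpha)}\Gamma_{s+\alpha}\zeta_{K,2}(1-2(s+\alpha))$ via Lemma~\ref{lemma:archcalc} (applied with $u=2(s+\alpha)$). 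This is the step where the factor $(32/\pi^2)^{-\alpha}$ in the definition \eqref{gamma} of $\Gamma_\alpha$ and the constant $2^{5/2}/\pi$ in $g_\alpha(s)$ conspire to produce the $\Gamma_\alpha$ appearing in the statement; I expect this bookkeeping of $2$-powers and $\pi$-powers to be the single most delicate computation, and I would carry it out by tracking, separately, the Archimedean factor coming from Poisson summation (the $\Gamma((js-2u)/2)/\Gamma((2-js+2u)/2)$ in \eqref{Fnexprssion}) and the one coming from the functional equation hidden in $J_j$. Once the Archimedean factors are in canonical form, the residue at $s=0$ of the resulting integrand in \eqref{eq:MNksquarealpha} must be shown to cancel the ``second main term'' $M_R(-1)$ exactly, so that what survives on the left side of \eqref{eq:MT1} is only the residue at $s=0$ of the integral in \eqref{eq:right} evaluated by the same Euler-product identification $D_N(k=0;s)\leftrightarrow A_{\alpha+s}(l)$ already used in Lemma~\ref{lemma:MNk0}; pushing the remaining contour past $s=0$ and picking up $\widehat{F}(1)A_\alpha(l)/(2\zeta_{K,2}(2)\sqrt{N(l)})$ then gives \eqref{eq:MT1}, with the off-line integral absorbed into the acceptable error.

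For \eqref{eq:MNksquareandMR-11} and \eqref{eq:MNksquareandMR-111} I would run the identical mechanism, now with a general primary $l=l_1l_2$, so that the square part $l_2$ must be tracked through $\sigma_{(\alpha_j)}(l^*n^2)$ and the factors $\prod_{\varpi|nl}(1+N(\varpi)^{-1})^{-1}$; this is where the appearance of $\sqrt{N(l_1)}=\sqrt{N(l^*)}$ rather than $\sqrt{N(l)}$ in the denominators comes from, and where Lemma~\ref{lemma:A} (with the observation that $d_{a,b}=d_{a,b,c}=1$ when $a+b=1$ resp.\ $a+b+c=1$) is needed to justify the contour shift past $s=0$ and to bound the resulting off-line integral by $O(X^{3/4+\varepsilon}N(l)^\varepsilon)$ in the $j=3$ case. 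The symmetry claim at the end---that one may replace $\alpha$ by $\beta$ or $\gamma$---is immediate from the symmetry of $G_j$ imposed in Remark~\ref{remark:zero} together with the symmetry of $A_{\alpha_1,\dots,\alpha_j}(l)$ and $\sigma_{(\alpha_j)}$ under permutation of the parameters, so no separate argument is required. The main obstacle throughout is purely the Archimedean/constant bookkeeping in the second paragraph: making sure the $2$- and $\pi$-powers from the Gaussian-field Poisson formula (Lemma~\ref{Poissonsumformodd}, with its $\sqrt{N(k)X/2N(n)}$ and $\widetilde e$), from $g_{(\alpha_j)}(s)$, and from Lemma~\ref{lemma:archcalc} combine to exactly the factor $\Gamma_\alpha^{\delta_1}$ demanded by the theorem, with nothing left over.
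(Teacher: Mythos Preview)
Your proposal has the right ingredients (Lemma~\ref{lemma:archcalc} with $u=2(s+\alpha_i)$, Lemma~\ref{lemma:Jprop} for the residue of $J_j$, Remark~\ref{remark:M1toM2} for passing between the $\pm$ pieces), but the assembly is off in two essential places.

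First, you propose to extend the $a$-sum in $M_R(-1)$ to all primary $a$ and absorb the complementary range $N(a)\le Y$ into the error ``as in Lemma~\ref{lemma:MRbound}''. That lemma only treats the cases with at least two $\epsilon_i=-1$; here, with a single $-1$, no such absorption is available. More to the point, \eqref{eq:MT1} and \eqref{eq:MNksquareandMR-11} are \emph{exact} identities with no error term, so an error-absorption argument cannot succeed for $j=1,2$. What the paper actually does is prove the Dirichlet-series identity
\[
D_N(k_1=\pm i,\alpha_i;s)\;=\;D_R(\epsilon_1,\dots,\epsilon_j;s),
\]
i.e.\ that the residue of $J_j$ (after Lemma~\ref{lemma:archcalc}) matches, Euler factor by Euler factor, the inner $b,r_1,\dots,r_j$-sum defining $M_R$. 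Once this holds, the $N(a)\le Y$ sum in $M_N(k_1=\pm i,\alpha_i)$ and the $N(a)>Y$ sum in $M_R(\epsilon_1,\dots,\epsilon_j)$ \emph{splice together} into a full sum over $a$, and then $\sum_{a|c}\mu_{[i]}(a)$ forces $c=r_i=1$, giving the clean integral \eqref{MN}. There is no residue-versus-$M_R(-1)$ cancellation of the kind you describe; the two objects are the same integrand on complementary $a$-ranges.

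Second, for $j=1,2$ you are missing the mechanism that makes the identity exact. After reaching \eqref{MN}, the paper applies Remark~\ref{remark:M1toM2} and the change of variable $s\mapsto -s$ to the companion sum $M_{-N}(k_1=\pm i,\alpha_i)+M_{-R}(\epsilon_1,\dots,\epsilon_j)$, and then invokes the Archimedean identity
\[
g_{-\alpha}(-s)\,\Gamma_{-\alpha-s}\,\Gamma_{\alpha}\;=\;g_{\alpha}(s)
\]
(together with $\Gamma_{-\alpha}=\Gamma_\alpha^{-1}$) to show that the transformed integrand is the \emph{negative} of the integrand in \eqref{MN}, on the line $\Re(s)=-\varepsilon$. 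The sum of the two integrals is therefore exactly the residue at $s=0$, with nothing left over; that is why \eqref{eq:MT1} and \eqref{eq:MNksquareandMR-11} carry no $O$-term. Only in the $j=3$ case does one instead shift the contour in \eqref{MN} to $\Re(s)=-\tfrac12+\varepsilon$ and bound the remaining integral by $O(X^{3/4+\varepsilon}N(l)^{\varepsilon})$, which is the approach you sketch. Your overall strategy would work for \eqref{eq:MNksquareandMR-111} but not for the other two.
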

\begin{proof}
     We begin our proof in general by applying Lemma \ref{lemma:archcalc} with $u=2(s + \alpha_i)$ to \eqref{eq:MNksquarealpha}, thus obtaining
\begin{align*}
\begin{split}
 M_N(k_1=\pm i, \alpha_i) = & \frac {\pi}{2} \sum_{\substack{ a \equiv 1 \bmod {(1+i)^3} \\(a,2l)=1 \\ N(a) \leq Y}} \frac{\mu_{[i]}(a)}{N(a)^2}
\frac {1}{2\pi i}
\int\limits\limits_{(c_s)}\widehat{F}(1 -(1-\frac{j}{2})s  - \alpha_i)
g_{\alpha_1, \cdots, \alpha_j}(s) \left( \frac {\pi}{\sqrt{2}} \right )^{-2(s + \alpha_i)}  \\
& \times 2N(l a^2)^{s + \alpha_i}\Gamma_{s + \alpha_i}\zeta_{K,2}(1-2\alpha_i - 2s)
 \text{Res}_{w=\half - \alpha} \frac {4J_j(2s + 2\alpha_i, w;l,a)} {\pi\zeta_K(2s + 2\alpha_i)} \frac {G_j(s)ds}{s} .
\end{split}
\end{align*}

  Recall that $c_s = \half + \varepsilon$ and the residue of $\zeta_K(s)$ at $s=1$ equals $\pi/4$. We replace the residue of $\frac 4{\pi}J_j(2s + 2\alpha, w)$ at $w = \half - \alpha_i$ by the value of $J_j(2s + 2 \alpha_i, w)/\zeta_K(\half + w + \alpha_i)$ at $w = \half - \alpha_i$ to see that
\begin{align*}
 M_N(k_1=\pm i, \alpha_i) =& \frac {\pi}2\sum_{\substack{ a \equiv 1 \bmod {(1+i)^3} \\(a,2l)=1 \\ N(a) \leq Y}} \frac{\mu_{[i]}(a)}{N(a)^2}
\frac {1}{2\pi i}
\int\limits\limits_{(c_s)}\widehat{F}(1 -(1-\frac{j}{2})s   - \alpha)
g_{\alpha_1, \cdots, \alpha_j}(s)  \frac {G_j(s)}{s}
\\
& \times \Gamma_{s + \alpha_i} N(a)^{2 \alpha + 2s} D_N(k_1=\pm i, \alpha_i; s)
  ds,
\end{align*}
where
\begin{align*}
 D_N(k_1=\pm i, \alpha_i; s) =  2 N(l)^{s + \alpha_i}   \zeta_{K,2}(1-2\alpha_i - 2s)  \frac{J_j(2s + 2\alpha_i, w)}{ \zeta_K(2s + 2\alpha_i)
 \zeta_K(\half + w + \alpha_i)} \Big|_{w=\half - \alpha_i}.
\end{align*}

   Next, by writing $c=ab$ and setting $\epsilon_i=-1$ and $\epsilon_k=1$ for all $k \neq i$ in \eqref{eq:MRmainterms}, we deduce that
\begin{align*}
M_R(\epsilon_1, \cdots, \epsilon_j) =&  \frac{\pi }{2} \sum_{\substack{ a \equiv 1 \bmod {(1+i)^3} \\ (a,2l) = 1\\ N(a) > Y}}\frac{\mu_{[i]}(a)}{N(a)^2}
\frac{1}{2\pi i} \int\limits\limits_{(\varepsilon)}
\frac{G_j(s)}{s} g_{\alpha_1, \cdots,\alpha_j}(s)\widehat{F}(1 -(1-\frac{j}{2})s    - \alpha_i)
 \\
& \times \Gamma_{\alpha_i + s} N(a)^{2 \alpha_i + 2s}
D_R(\epsilon_1, \cdots, \epsilon_j;s)
  ds,
\end{align*}
where
\begin{align*}
 D_R(-1,1,1;s) =& \frac{1}{ \zeta_{K,2}(2)} \sum_{\substack{ b \equiv 1 \bmod {(1+i)^3} \\ (b,2l)=1}}  \frac{1}{N(b)^{2(1-\alpha_i-s)}} \\
 & \times \sum_{\substack{ r_i \equiv 1 \bmod {(1+i)^3} \\ 1 \leq i \leq 3 \\ r_i | ab}}
\prod^j_{i=1}\frac{\mu_{[i]}(r_i)}{N(r_i)^{\half + \alpha_i+s}}\frac{1}{\sqrt{N((l\prod^j_{i=1}r_i)^*)}}
 A_{\epsilon_1(\alpha_1+ s), \cdots, \epsilon_j(\alpha_j+ s)}(l\prod^j_{i=1}r_i)
 .
\end{align*}

   Using arguments similar to those used in the proof of \cite[Lemma 6.2]{Young2}, we see that
\begin{equation*}
 D_N(k_1=\pm i, \alpha_i; s) = D_R(\epsilon_1, \cdots, \epsilon_j;s).
\end{equation*}

   It follows from this that we have
\begin{align*}
M_N(k_1=\pm i, \alpha_i) + M_R(\epsilon_1, \cdots, \epsilon_j) =&  \frac{\pi }{2} \sum_{\substack{ a \equiv 1 \bmod {(1+i)^3} \\ (a,2l) = 1}}\frac{\mu_{[i]}(a)}{N(a)^2}
\frac{1}{2\pi i} \int\limits\limits_{(\varepsilon)}
\frac{G_j(s)}{s} g_{\alpha_1, \cdots,\alpha_j}(s)\widehat{F}(1 -(1-\frac{j}{2})s    - \alpha_i)
\\
& \times \Gamma_{\alpha_i + s} N(a)^{2 \alpha_i + 2s}
D_R(\epsilon_1, \cdots, \epsilon_j;s)
  ds.
\end{align*}
Grouping $ab$ into a variable and applying the M\"{o}bius formula implies that only $ab=1$ survives, which implies that $r_1 = r_2 = r_3 = 1$.  Thus
\begin{align}
\label{MN}
\begin{split}
 &M_N(k_1=\pm i, \alpha_i) + M_R(\epsilon_1, \cdots, \epsilon_j) \\
 = &
\frac{\pi}{2 \zeta_{K,2}(2) \sqrt{N(l_1)}}
\frac{1}{2\pi i} \int\limits\limits_{(\varepsilon)}
\frac{G_j(s)}{s} g_{\alpha_1, \cdots,\alpha_j}(s)\widehat{F}(1 -(1-\frac{j}{2})s    - \alpha_i)\Gamma_{\alpha_i + s}   A_{\epsilon_1(\alpha_1+ s), \cdots, \epsilon_j(\alpha_j+ s)}(l)
  ds.
\end{split}
\end{align}

  When $j=3$, we can move the contour of integration to $-1/2 + \varepsilon$, crossing a pole at $s=0$ only, in view of Lemma \ref{lemma:A} and Remark \ref{remark:zero}.  The residue at $s=0$ gives the main term in \eqref{eq:MNksquareandMR-111}, and the error term is easily  seen to be of the desired size.

  For $j=1,2$, we further obtain an expression for $M_{-N}(k_1=\pm i, \alpha_i) + M_{-R}(\epsilon_1, \cdots, \epsilon_j)$ from the above expression using Remark \ref{remark:M1toM2}, where $\epsilon_1, \cdots, \epsilon_j$ are the same as those in \eqref{MN}. Using the relation that $\widehat{F_{-(\alpha_j)}}(w)=\widehat{F}(w-\sum^j_{i=1}\alpha_i)$, we see that
\begin{align*}
& M_{-N}(k_1=\pm i, \alpha_i) + M_{-R}(\epsilon_1, \cdots, \epsilon_j)
\\
= & \frac{\pi}{2 \zeta_{K,2}(2) \sqrt{N(l_1)}}
\frac{1}{2\pi i} \int\limits\limits_{(\varepsilon)}
\frac{G_j(s)}{s} g_{-\alpha_1, \cdots,-\alpha_j}(s)\widehat{F}(1 -(1-\frac{j}{2})s    +\alpha_i-\sum^j_{i=1}\alpha_i)\Gamma_{-\alpha_i + s}   \Gamma_{(\alpha_j)}  A_{\epsilon_1(-\alpha_1+ s), \cdots, \epsilon_j(-\alpha_j+ s)}(l)
  ds.
\end{align*}

   We apply a change of variable $s \rightarrow -s$ to recast the above as
\begin{align}
\label{MNminus}
\begin{split}
& M_{-N}(k_1=\pm i, \alpha_i) + M_{-R}(\epsilon_1, \cdots, \epsilon_j)  \\
=&-\frac{\pi}{2 \zeta_{K,2}(2) \sqrt{N(l_1)}}
\frac{1}{2\pi i} \int\limits\limits_{(-\varepsilon)}
\frac{G_j(s)}{s} g_{-\alpha_1, \cdots,-\alpha_j}(-s)\widehat{F}(1 +(1-\frac{j}{2})s    +\alpha_i-\sum^j_{i=1}\alpha_i)\Gamma_{-\alpha_i - s}\Gamma_{(\alpha_j)}    A_{\epsilon_1(-\alpha_1- s), \cdots, \epsilon_j(-\alpha_j- s)}(l)
  ds.
\end{split}
\end{align}

   We now deduce from the identity
\begin{equation*}
 g_{-\alpha}(-s)\Gamma_{-\alpha-s} \Gamma_{\alpha} = g_{\alpha}(s)
\end{equation*}
   and the identity $\Gamma_{\alpha}=\Gamma^{-1}_{-\alpha}$ that
\begin{equation*}
 g_{-\alpha_1, \cdots,-\alpha_j}(-s)\Gamma_{-\alpha_i - s}\Gamma_{(\alpha_j)} =g_{\alpha_1, \cdots,\alpha_j}(s)\frac {\Gamma_{(\alpha_j+s)}}{\Gamma_{\alpha_i + s}}.
\end{equation*}

  When $j=2$, the above allows us to see that the two integrands on the right-hand sides of \eqref{MN} and \eqref{MNminus} (with $a_i$ replaced by $a_{j-i+1}$) are negative to each other, hence the sum of the two integrals equals to the residue at $s=0$ of the integrand in \eqref{MN}, thus proving \eqref{eq:MNksquareandMR-11}. Applying the above discussions similarly to the case $j=1$ by taking note of \eqref{eq:right} allows us to establish \eqref{eq:MT1} as well.
\end{proof}

\subsection{Completion of the proof}

  We are now able to complete the proof of Theorem \ref{theo:recursive2}. We first consider the case $j=1$. In this case, we note that it follows from Remark \ref{remark:M1toM2} that we also have
\begin{equation}
\label{eq:MT2}
 M_{-N}(k=0) + M_{N}(k_1=\pm i, \alpha) + M_{R}(-1) + M_{R}(1) =   \Gamma_{\alpha}\frac{\pi \widehat{F}(1-\alpha)}{2 \zeta_{K,2}(2)\sqrt{N(l)}}A_{-\alpha}(l).
\end{equation}

  Combining Lemma \ref{lemma:MRresult1}, \eqref{MNknot0} and taking note of Remark \ref{remark:M1toM2}, we get
\begin{align}
\label{thm:combine}
\begin{split}
 M_{\alpha}(l) =& M_N(k=0) + M_{-N}(k = 0) + M_N(k_1 = \pm i, \alpha) + M_{-N}(k_1 = \pm i, \alpha)\\
\\
& + M_{R}(1) + M_{R}(-1) + M_{-R}(1) + M_{-R}(-1) + O\lp \frac{X^{f + \varepsilon}}{Y^{2f-1}} N(l)^{1/2 + \varepsilon} + X^{1/4 + \varepsilon} Y N(l)^{1/2 + \varepsilon} +X^{1/2 + \varepsilon} \rp.
\end{split}
\end{align}
   Now applying \eqref{eq:MT1} and \eqref{eq:MT2} in the above expression and setting $Y = X^{\frac{1}{4}}$ in \eqref{thm:combine} allows us to see that the statement of Theorem \ref{theo:recursive2} is valid for $j=1$.

   For the case $j=2$, we combine Lemma \ref{lemma:MRresult1} and \eqref{MNknot0} and Remark \ref{remark:M1toM2} to obtain
\begin{align}
\label{thm:combine2}
\begin{split}
 M_{\alpha, \beta}(l) =& M_N(k=0) + M_R(1, 1)+ M_{-N}(k=0) + M_{-R}(1, 1)+M_{-N}(k = 0) \\
& +M_N(k_1=\pm i, \alpha) + M_R(-1,1)+ M_{-N}(k_1=\pm i, \beta) + M_{-R}(-1,1)\\
& +M_N(k_1=\pm i, \beta) + M_R(1,-1)+ M_{-N}(k_1=\pm i, \alpha) + M_{-R}(1,-1) \\
& + O\lp \frac{X^{f + \varepsilon}}{Y^{2f-1}} N(l)^{1/2 + \varepsilon} + X^{1/2 + \varepsilon} Y N(l)^{1/2 + \varepsilon} \rp.
\end{split}
\end{align}
   Now applying \eqref{eq:MNk0andMR111} and \eqref{eq:MNksquareandMR-11} in the above expression as well as Remark \ref{remark:M1toM2} and setting $Y = X^{\frac{2f-1}{4f}}$ in \eqref{thm:combine2} allows us to see that the statement of Theorem \ref{theo:recursive2} is valid for $j=2$.

  For the case $j=3$, we combine Lemma \ref{lemma:MRresult1} and \eqref{MNknot0} to see that
\begin{align*}
\begin{split}
 M_1 =& M_N(k=0) + M_N(k_1=\pm i, \alpha) + M_N(k_1=\pm i,  \beta) + M_N(k_1=\pm i, \gamma) \\
& +M_R(1,1,1) + M_R(1,1,-1) + M_R(1,-1, 1)+M_R(-1,1,1) \\
&+ O\lp \frac{X^{f + \varepsilon}}{Y^{2f-1}} N(l)^{1/2 + \varepsilon} + X^{3/4 + \varepsilon} Y N(l)^{1/2 + \varepsilon} \rp.
\end{split}
\end{align*}

   We now apply \eqref{eq:MNk0andMR111} and \eqref{eq:MNksquareandMR-111} to recast the above as
\begin{align}
\label{thm:M1}
\begin{split}
  M_1  =& \pi A_{\alpha, \beta, \gamma}(l) \frac{\widehat{F}(1)}{2 \zeta_{K,2}(2) \sqrt{N(l_1)}}  \\
& + \pi A_{- \alpha, \beta,\gamma}(l) \Gamma_{\alpha} \frac{ \widehat{F}(1- \alpha )}{2 \zeta_{K,2}(2) \sqrt{N(l_1)}} + \pi A_{\alpha, -\beta,\gamma}(l) \Gamma_{\beta} \frac{ \widehat{F}(1- \beta )}{2 \zeta_{K,2}(2) \sqrt{N(l_1)}}+ \pi A_{ \alpha, \beta,-\gamma}(l) \Gamma_{\gamma} \frac{ \widehat{F}(1- \gamma )}{2 \zeta_{K,2}(2) \sqrt{N(l_1)}}\\
&+ O\lp \frac{X^{f + \varepsilon}}{Y^{2f-1}} N(l)^{1/2 + \varepsilon} + X^{3/4 + \varepsilon} Y N(l)^{1/2 + \varepsilon} \rp.
\end{split}
\end{align}

We then obtain an asymptotic for $M_{-1}$ using Remark \ref{remark:M1toM2}, which gives the remaining four main terms in \eqref{eq:Malpha3} plus the same error as given in \eqref{thm:M1}.  We now readily deduce the assertion of Theorem \ref{theo:recursive2} for $j=3$ by setting $Y = X^{\frac{f-\frac34}{2f}}$. This completes the proof of Theorem \ref{theo:recursive2}.

\section{Proof of Theorem \ref{thm: nonvanishing}}
\label{sect: nonvanishing}

   We consider the following mollifier
\begin{align}
\label{Md}
  M(d) =\sum_{\substack{ l \equiv 1 \bmod {(1+i)^3} \\ N(l) \leq M}} \lambda(l)\sqrt{N(l)}\chi_{(1+i)^5d}(l).
\end{align}
  Our goal is to choose $\lambda(l)$ optimally such that the following mollified first and second  moments (corresponding to $j=1,2$, respectively) are comparable:
\begin{align*}
  S(L(\half, \chi_{(1+i)^5d})^jM(d)^j; \Phi)=\frac 1X \sum_{\substack{ d \in \mathcal{O}_K \\ (d, 2)=1}}\mu^2_{[i]}(d)L(\half, \chi_{(1+i)^5d})^jM(d)^j\Phi(\frac {N(d)}{X}).
\end{align*}
  Here we set $M = (\sqrt{X})^{\theta}$ for some $\theta < 1 - \varepsilon$ and $\Phi$ is given in Theorem \ref{theo:mainthm} such that we take $\Phi$ to be an approximation to the characteristic function of $(1, 2)$ so that $\widehat{\Phi}(1) \sim 1$.
  To specify $\lambda(l)$, we first make a linear change of variables to define for primary $\gamma$,
\begin{align*}
 \xi(\gamma)=\sum_{\substack{a \equiv 1 \bmod {(1+i)^3}}}\frac {\lambda(a\gamma)}{h(a)}\frac {N(a) d_{[i]}(a)}{\sigma_{[i]}(a)}.
\end{align*}

   Note here that we can recover $\lambda$ from $\xi$ by the following relation:
\begin{align}
\label{eq:lambda}
 \lambda(l)=\sum_{\substack{a \equiv 1 \bmod {(1+i)^3}}}\frac {\mu_{[i]}(a)}{h(a)}\frac {N(a) d_{[i]}(a)}{\sigma_{[i]}(a)}\xi(la).
\end{align}

   Thus, in order to determine $\lambda(l)$, it suffices to define $\xi(\gamma)$. We shall assume that $\xi(\gamma)$ is supported on primary square-free elements $\gamma $ satisfying $N(\gamma) \leq M$. We then note that \eqref{eq:lambda} implies that $\lambda(l)$  is also supported on primary square-free elements $\gamma $ satisfying $N(\gamma) \leq M$.

   We shall further require that
\begin{align}
\label{eq:xibound}
 |\xi(\gamma)|=\frac 1{N(\gamma)\log^2 M}\prod_{\substack{\varpi \equiv 1 \bmod {(1+i)^3} \\ \varpi | \gamma}}\lp1+O\lp \frac {1}{N(\varpi)} \rp \rp.
\end{align}
   It is then easy to deduce from this and \eqref{eq:lambda} that $\lambda(l) \ll N(l)^{-1+\varepsilon}$.

\subsection{First mollified moment}
   Our evaluation of the first mollified moment requires us to evaluate $M_0(l)$ explicitly, where $M_0(l)$ is defined in \eqref{Malphal}. This can be done directly from Theorem \ref{thm: Malphal1} by considering the limit as $\alpha \rightarrow 0$ of the asymptotic expression given in \eqref{eq:Malphal} for $M_{\alpha}(l)$ (with $f=1/2$ there). In this way, we obtain the following result analogue to \cite[Proposition 1.2]{sound1}:
\begin{theorem}
\label{theo:1stmoment}
Let $\Phi$ be given in Theorem \ref{theo:mainthm}.   For any primary square-free  $l \in \mathcal{O}_K$ and any $\varepsilon>0$, we have
\begin{align*}
 \sumstar_{(d,2)=1} L(\thalf, \chi_{(1+i)^5d}) \Phi\leg{N(d)}{X}\chi_{(1+i)^5d}(l) =&  \frac {\pi^2}{4} \frac{ \widehat{\Phi}(1)X}{\zeta_{K}(2)\sqrt{N(l)}}\frac {C}{g(l)}\left (\log\frac {\sqrt{X}}{N(l)}+C_2+\sum_{\substack{\varpi \equiv 1 \bmod {(1+i)^3} \\ \varpi | l}} \frac {C_2(\varpi)\log N(\varpi)}{N(\varpi)} \right ) \\
&+O(N(l)^{1/2 + \varepsilon} X^{\frac 12 + \varepsilon}),
\end{align*}
  where
\begin{align*}
C=\frac 1{3}\prod_{\substack{\varpi \equiv 1 \bmod {(1+i)^3}}}\left (1-\frac {1}{N(\varpi)(N(\varpi)+1)} \right ),  \quad
g(l)=\prod_{\substack{\varpi \equiv 1 \bmod {(1+i)^3} \\ \varpi | l}}\lp \frac {N(\varpi)+1)}{N(\varpi)}\rp \left (1-\frac {1}{N(\varpi)(N(\varpi)+1)} \right ).
\end{align*}
  Moreover, $C_2$ is a constant depending only on $\Phi$ and $C_2(\varpi) \ll 1$ for all $\varpi$.
\end{theorem}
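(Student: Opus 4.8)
The plan is to deduce Theorem~\ref{theo:1stmoment} from the case $j=1$, $f=\half$ of Theorem~\ref{thm: Malphal1} by letting $\alpha\to 0$, in the spirit in which \cite[Proposition~1.2]{sound1} is obtained from a twisted first moment classically. Since $\Phi$ has compact support and every $L(s,\chi_{(1+i)^5d})$ is entire, $M_\alpha(l)$ (from \eqref{Malphal}) is a finite sum of entire functions of $\alpha$, so that $M_0(l)=\lim_{\alpha\to 0}M_\alpha(l)$. Expanding the sum over $\epsilon_1\in\{\pm1\}$ in \eqref{eq:Malphal} and invoking the error bound of Theorem~\ref{thm: Malphal1}, we have, uniformly for $\alpha$ in the rectangle $|\Re(\alpha)|\le\varepsilon/\log X$, $|\Im(\alpha)|\le X^{\varepsilon}$,
\begin{equation*}
 M_\alpha(l)=\frac{\pi}{2\zeta_{K,2}(2)\sqrt{N(l)}}\Bigl(A_\alpha(l)\,\widehat F(1)+\Gamma_\alpha\,\widehat F(1-\alpha)\,A_{-\alpha}(l)\Bigr)+O\bigl(N(l)^{1/2+\varepsilon}X^{1/2+\varepsilon}\bigr),
\end{equation*}
so the task reduces to taking the limit of the bracket and reading off the main term; the error being uniform in $\alpha$, it passes through the limit unchanged.

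The point is that, although $A_{\pm\alpha}(l)=\zeta_{K,2}(1\pm 2\alpha)B_{\pm\alpha}(l)$ each has a simple pole at $\alpha=0$ (coming from the pole of $\zeta_{K,2}(s)=\zeta_K(s)(1-2^{-s})$ at $s=1$), the two summands have opposite residues there---because $\Gamma_0=1$ and $B_\alpha(l)$, $\widehat F(1-\alpha)$ agree with $B_{-\alpha}(l)$, $\widehat F(1)$ at $\alpha=0$---so the singularity cancels and the limit is genuinely finite, exactly as in the situation recalled around \cite[Lemma~2.3]{Sono}. Concretely I would insert the Laurent expansion $\zeta_{K,2}(1\pm 2\alpha)=\pm\frac{1}{2\alpha}\text{Res}_{s=1}\zeta_{K,2}(s)+\tau+O(\alpha)$, with $\text{Res}_{s=1}\zeta_{K,2}(s)=\tfrac12\,\text{Res}_{s=1}\zeta_K(s)=\pi/8$, together with the first-order Taylor expansions of $B_{\pm\alpha}(l)$, $\Gamma_\alpha$ and $\widehat F(1-\alpha)=X^{1-\alpha}\widehat\Phi(1-\alpha)$, and multiply everything out: the $1/\alpha$ contributions annihilate, and the limit becomes a finite linear combination of $\log X$, $\frac{d}{d\alpha}\log B_\alpha(l)|_{\alpha=0}$, $\frac{d}{d\alpha}\Gamma_\alpha|_{\alpha=0}$, $\widehat\Phi'(1)/\widehat\Phi(1)$ and $\tau$, each multiplied by $\frac{\pi}{16}B_0(l)X\widehat\Phi(1)$ and by the outer factor $\frac{\pi}{2\zeta_{K,2}(2)\sqrt{N(l)}}$.

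It then remains to recognise this as the claimed formula. Using the Euler product for $B_\alpha(l)$ displayed after \eqref{B}, setting $\alpha=0$ and factoring the unrestricted product $\prod_{\varpi\nmid 2}\bigl(1-\tfrac{1}{N(\varpi)(N(\varpi)+1)}\bigr)$ out of the $l$-dependent factors identifies $B_0(l)$ with a fixed multiple of $C/g(l)$, with $C,g(l)$ as in the statement; combined with $\text{Res}_{s=1}\zeta_K(s)=\pi/4$ and $\zeta_{K,2}(2)=\tfrac34\zeta_K(2)$ this produces the prefactor $\tfrac{\pi^2}{4}\tfrac{\widehat\Phi(1)X}{\zeta_K(2)\sqrt{N(l)}}\tfrac{C}{g(l)}$. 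Next, $\frac{d}{d\alpha}\log B_\alpha(l)|_{\alpha=0}$ splits into a multiple of $-\log N(l)$ (from the $N(l^*)$-power in $B_\alpha(l)$) plus $\sum_{\varpi\nmid 2l}e(\varpi)\log N(\varpi)$ with $e(\varpi)\asymp N(\varpi)^{-2}$; the $-\log N(l)$ part combines with the $X^{-\alpha}$ from $\widehat F(1-\alpha)$ to assemble into $\log\tfrac{\sqrt X}{N(l)}$, the convergent global sum $\sum_{\varpi\nmid 2}e(\varpi)\log N(\varpi)$ together with $\tau$, $\frac{d}{d\alpha}\Gamma_\alpha|_{\alpha=0}$ and $\widehat\Phi'(1)/\widehat\Phi(1)$ merges into a single constant $C_2$ depending only on $\Phi$, and the finitely many local factors over $\varpi\mid l$ yield $\sum_{\varpi\mid l}\tfrac{C_2(\varpi)\log N(\varpi)}{N(\varpi)}$ with $C_2(\varpi)=N(\varpi)e(\varpi)\cdot(\text{const})\ll 1$. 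This is precisely the asserted shape, with error $O(N(l)^{1/2+\varepsilon}X^{1/2+\varepsilon})$.

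The delicate part is the bookkeeping of the limit: pairing the two polar terms so that the singularity vanishes, handling the first-order Taylor data of $B_{\pm\alpha}(l)$, $\Gamma_\alpha$ and $\widehat F(1-\alpha)$ simultaneously, splitting the logarithmic derivative of the Euler product of $B_\alpha(l)$ into its $l$-independent part and its local factors over $\varpi\mid l$, and checking that the numerical constants collapse to the clean expression in $C$, $g(l)$ and $\zeta_K(2)$. None of this is deep---it runs parallel to \cite[Section~6]{Young1}---but the constants must be tracked with care.
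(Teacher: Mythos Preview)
Your approach is exactly the one the paper takes: it states (in the sentence immediately preceding Theorem~\ref{theo:1stmoment}) that the result is obtained ``directly from Theorem~\ref{thm: Malphal1} by considering the limit as $\alpha\to 0$ of the asymptotic expression given in \eqref{eq:Malphal} for $M_\alpha(l)$ (with $f=1/2$ there)'', and gives no further details. Your write-up simply carries out that limit explicitly---the pole cancellation via $\zeta_{K,2}(1\pm 2\alpha)$, the identification $B_0(l)=3C/g(l)$, and the assembly of $\log(\sqrt{X}/N(l))$ from the $N(l)^{-\alpha}$ and $X^{-\alpha}$ factors---so there is nothing to correct.
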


  We then apply \eqref{Md} and Theorem \ref{theo:1stmoment} to see that
\begin{align*}
  S(L(\half, \chi_{(1+i)^5d})M(d); \Phi) =&  \frac {\pi^2}{4} \frac{ C\widehat{\Phi}(1)}{\zeta_{K}(2)}\sum_{\substack{l \equiv 1 \bmod {(1+i)^3} \\ N(l) \leq M}}\frac {\lambda(l)}{g(l)}\left (\log\frac {\sqrt{X}}{N(l)}+C_2+\sum_{\substack{\varpi \equiv 1 \bmod {(1+i)^3} \\ \varpi | l}} \frac {C_2(\varpi)}{N(\varpi)\log N(\varpi)} \right ) \\
&+O(X^{-\varepsilon}).
\end{align*}

   We now define a multiplicative function $g_1(\gamma)$ on primary, square-free $\gamma$ such that for any primary prime $\varpi$, we have
\begin{align*}
g_1(\varpi) =\frac 1{g(\varpi)}-\frac {2N(\varpi)}{h(\varpi)(N(\varpi)+1)}.
\end{align*}
  We note that $g_1(\varpi)=-1+O(1/N(\varpi))$.  Using \eqref{eq:lambda} to write $\lambda$ in terms of $\xi$, we derive that
\begin{align*}
 \sum_{\substack{l \equiv 1 \bmod {(1+i)^3} \\ N(l) \leq M}}\frac {\lambda(l)}{g(l)}\log\frac {\sqrt{X}}{N(l)}=&
\sum_{\substack{\gamma \equiv 1 \bmod {(1+i)^3}}}g_1(\gamma)\left (\log (\sqrt{X}N(\gamma))+O(\sum_{\substack{\varpi \equiv 1 \bmod {(1+i)^3} \\ \varpi | \gamma}} \frac {\log N(\varpi)}{N(\varpi)}) \right ) \\
=& \sum_{\substack{\gamma \equiv 1 \bmod {(1+i)^3}}}g_1(\gamma)\left (\log (\sqrt{X}N(\gamma))\right )+O\left(\frac {1}{\log X} \right ),
\end{align*}
  where the last estimation above follows from \eqref{eq:xibound}.

  Similar arguments imply that
\begin{align*}
 \sum_{\substack{l \equiv 1 \bmod {(1+i)^3} \\ N(l) \leq M}}\frac {\lambda(l)}{g(l)} \left ( C_2+\sum_{\substack{\varpi \equiv 1 \bmod {(1+i)^3} \\ \varpi | l}} \frac {C_2(\varpi)}{N(\varpi)\log N(\varpi)} \right ) \ll \frac {1}{\log X}.
\end{align*}

   We then conclude from the above discussions that the first mollified moment is
\begin{align}
\label{1stmollifiedmoment}
 S(L(\half, \chi_{(1+i)^5d})M(d); \Phi)= \frac {\pi^2}{4}  \frac{ C\widehat{\Phi}(1)}{\zeta_{K}(2)}\sum_{\substack{\gamma \equiv 1 \bmod {(1+i)^3}}}g_1(\gamma)\left (\log \left(\sqrt{X}N(\gamma) \right )\right )+O\left(\frac {1}{\log X} \right ).
\end{align}

\subsection{Second mollified moment}

   To evaluate the second mollified moment, we shall not apply an approach similar to our treatment for the first mollified moment since the error term in the asymptotic expression for $M_{\alpha, \beta}(l)$ given in Theorem \ref{thm: Malphal1} is too large in the $l$ aspect (of size $N(l)^{1/2 + \varepsilon}$). This would not allow us to take $\theta$ to be close to $1$. Rather, we follow the approach of Soundararajan in \cite{sound1} here.

    Let $Y$ be a parameter and we write $\mu_{[i]}^2(d)=M_Y(d)+R_Y(d)$ where
\begin{equation*}
    M_Y(d)=\sum_{\substack {l^2|d \\ N(l) \leq Y}}\mu_{[i]}(l) \; \quad \mbox{and} \; \quad  R_Y(d)=\sum_{\substack {l^2|d \\ N(l) >
    Y}}\mu_{[i]}(l).
\end{equation*}

   We then have
\begin{align*}
  S(L(\half, \chi_{(1+i)^5d})^2M(d)^2; \Phi)=S_M(L(\half, \chi_{(1+i)^5d})^2M(d)^2; \Phi)+S_R(L(\half, \chi_{(1+i)^5d})^2M(d)^2; \Phi),
\end{align*}
  where
\begin{align*}
 S_M(L(\half, \chi_{(1+i)^5d})^2M(d)^2; \Phi)=& \frac 1X \sum_{\substack{ d \in \mathcal{O}_K \\ (d, 2)=1}}M_Y(d)L(\half, \chi_{(1+i)^5d})^jM(d)^j\Phi(\frac {N(d)}{X}), \\
  S_R(L(\half, \chi_{(1+i)^5d})^2M(d)^2; \Phi)=& \frac 1X \sum_{\substack{ d \in \mathcal{O}_K \\ (d, 2)=1}}R_Y(d)L(\half, \chi_{(1+i)^5d})^jM(d)^j\Phi(\frac {N(d)}{X}).
\end{align*}

   Similar to \cite[Proposition 1.1]{sound1}, we can show that when $N(l)\ll N(1)^{-1+\varepsilon}$,
\begin{align}
\label{SL}
   S_R(L(\half, \chi_{(1+i)^5d})^2M(d)^2; \Phi) \ll \frac {X^{\varepsilon}}{Y}+\frac {M^{j/2}}{X^{1/2-\epsilon}}.
\end{align}

  To evaluate $S_M(L(\half, \chi_{(1+i)^5d})^2M(d)^2; \Phi)$, we introduce two notations now. First, we denote for any integer $j \geq 0$,
\begin{align*}
  \Phi_{(j)}=\max_{0 \leq i \leq j}\int\limits_{\mr}|\Phi^{(i)}(t)|dt.
\end{align*}

  Secondly, for all integers $j > 0$,  we define $\Lambda_j(n)$ to be the function defined on integral ideals of $K$ which equals the coefficient of $N(n)^{-s}$ in the Dirichlet series expansion of $(-1)^{j}\zeta^{(j)}_K(s)/\zeta_K(s)$. In particular, $\Lambda_1(n)$ is the usual von Mangoldt function $\Lambda(n)$ on $K$. We note that $\Lambda_j(n)$ is supported on elements $n$ in $\mathcal{O}_{K}$ such that $((n))$ has at most $j$ distinct prime ideal factors, and $\Lambda_j (n) \ll_j (\log N(n))^j$.

   Now, we are ready to state our result on $S_M(L(\half, \chi_{(1+i)^5d})^2M(d)^2; \Phi)$. We omit its proof here since it is similar to that of \cite[Proposition 1.2]{sound1}. We only point out here the that triple pole of $\zeta_K(1+2s)^3$ at $s=0$ contributes a factor of $(\pi/4)^3$. One can also derive the main term given in \eqref{eq:2ndmoment} below from $M_{\alpha, \beta}(l)$ defined in \eqref{Malphal} using Lemma 2.3 in \cite{Sono}.
\begin{theorem}
\label{theo:2ndmoment}
Let $\Phi$ be given in Theorem \ref{theo:mainthm}.  For any primary $l \in \mathcal{O}_K$ such that $l=l_1l^2_2$ such that $l_1$ is primary and square-free, we have for any $\varepsilon>0$,
\begin{align}
\label{eq:2ndmoment}
\begin{split}
& S_M(L(\half, \chi_{(1+i)^5d})^2M(d)^2; \Phi) \\
=&  \frac {\pi^4}{4^3} \frac{D \widehat{\Phi}(1)}{36\zeta_{K}(2)}\frac {d(l_1)}{\sqrt{N(l)}}\frac {N(l_1)}{\sigma_{[i]}(l_1)h(l)}\Big (\log^3\frac {X}{N(l_1)}-3\sum_{\substack{\varpi \equiv 1 \bmod {(1+i)^3} \\ \varpi | l}} \log^2 N(\varpi)\log \frac {X}{N(l_1)}+O(l) \Big ) \\
&+O \left(\Phi_{(2)}\Phi^{\epsilon}_{(3)}\frac {N(l)^{\half+\varepsilon} Y^{1+\varepsilon}}{X^{\half+\varepsilon}}+ \frac {N(l)^{\varepsilon} X^{\varepsilon}}{\sqrt{N(l_1)}Y}+\frac {N(l)^{\varepsilon} X^{\varepsilon}}{(N(l_1)X)^{1/4}} \right ),
\end{split}
\end{align}
  where $h$ is the multiplicative function defined on primary prime powers by
\begin{align*}
h(\varpi^k)=1+\frac 1{N(\varpi)}+\frac 1{N(\varpi)^2}-\frac 4{N(\varpi)(N(\varpi)+1)}, \quad (k \geq 1)
\end{align*}
  and
\begin{align*}
 D=\frac 18\prod_{\substack{\varpi \equiv 1 \bmod {(1+i)^3}}}\left (1-\frac 1{N(\varpi)} \right )h(\varpi).
\end{align*}
  Also,
\begin{align*}
 O(l)=& \sum^3_{j,k=0}\sum_{\substack{m \equiv 1 \bmod {(1+i)^3}\\ m |l_1}}\sum_{\substack{n \equiv 1 \bmod {(1+i)^3}\\ n |l_1}}
\frac {\Lambda_j(m)}{N(m)}\frac {\Lambda_k(n)}{N(n)}D(m,n)Q_{j,k}(\log \frac {X}{N(l_1)}) \\
& -3(A+B\frac {\widehat{\Phi}'(1)}{\widehat{\Phi}(1)})\sum_{\substack{\varpi \equiv 1 \bmod {(1+i)^3} \\ \varpi | l}} \log^2 N(\varpi).
\end{align*}
  where $A$ and $B$ are absolute constants and $D(m, n) \ll 1$ uniformly for all $m$ and $n$. The $Q_{j,k}$ are polynomials of degree $\leq 2$ whose coefficients involve only absolute constants and linear combinations of $\frac {\widehat{\Phi}^{(j)}(1)}{\widehat{\Phi}(1)}$ for $1 \leq j \leq 3$.
\end{theorem}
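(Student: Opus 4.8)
The plan is to follow Soundararajan's evaluation of the second mollified moment in \cite[Proposition 1.2]{sound1}, transplanted to the Gaussian field by means of the tools assembled in Section~\ref{sec 2}. Expanding $M(d)^2$ via \eqref{Md} reduces the computation to that of the second moment of $L(\thalf,\chi_{(1+i)^5d})^2$ twisted by $\chi_{(1+i)^5d}(l)$ for $l$ a product of two elements of the support of $\lambda$; it is the $M_Y$-part of this twisted moment, with $l$ the parameter in \eqref{eq:2ndmoment}, that we aim to evaluate, the full mollified moment then following by weighting with $\lambda(l_1)\lambda(l_2)\sqrt{N(l_1 l_2)}$ and summing over $l = l_1 l_2$. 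After writing $\mu_{[i]}^2(d) = M_Y(d) + R_Y(d)$ and, in the $M_Y$ term, substituting $d = a^2 f$ with $N(a) \leq Y$ and $f$ ranging over \emph{all} odd elements, I would apply Lemma~\ref{lem:AFE} with $j = 2$ and $\alpha_1 = \alpha_2 = 0$: here $\sigma_{0,0} = d_{[i]}$ is the divisor function on primary elements and $\Gamma_{0,0} = 1$, so $L(\thalf,\chi_{(1+i)^5d})^2 = 2\sum_{n \equiv 1} d_{[i]}(n)\chi_{(1+i)^5d}(n)N(n)^{-1/2} V_{(0,0)}(N(n)/N(d))$. Since $\leg{a}{nl}^2 = 1$ whenever $(a,nl)=1$, the dependence on $a$ inside the residue symbol drops out, leaving a quadratic character in $f$ whose modulus is built from $n$, $l$ and a power of $1+i$.

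Next I would apply quadratic reciprocity in $\mz[i]$ (together with the supplementary laws for units and for $1+i$) to move $f$ into the ``denominator'' of the symbol, and then sum over $f$ using the Poisson formula of Lemma~\ref{Poissonsumformodd}, which turns the $f$-sum into a dual sum over $k \in \mz[i]$. The frequency $k = 0$ survives only when the modulus is, up to a unit, a perfect square, i.e.\ when $n = l^* h^2$ for some primary $h$; this produces the main term. For $k \neq 0$ one is left with Gauss sums $g(k,\cdot)$, which by Lemma~\ref{Gausssum} are bounded by $\sqrt{N(\cdot)}$ on average; after the contour manipulations already performed in Section~\ref{section:MN} and an appeal to the second moment estimate $\sumstar_{(d,2)=1,\, N(d)\leq X}|L_{2al}(\thalf+\alpha,\chi_{(1+i)^5d})|^2 \ll N(al)^{\varepsilon}(X(1+|\Im(\alpha)|))^{1+\varepsilon}$, their total contribution is $\ll \Phi_{(2)}\Phi_{(3)}^{\varepsilon}\,N(l)^{1/2+\varepsilon}Y^{1+\varepsilon}X^{-1/2+\varepsilon}$, the first displayed error term in \eqref{eq:2ndmoment}.

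For the $k = 0$ term I would insert the Mellin representation \eqref{eq:Vdef} of $V_{(0,0)}$ and the square condition $n = l^* h^2$; the resulting $s$-integral has arithmetic factor equal, by an Euler product factorization of the type in Lemma~\ref{lemma:A} (essentially $\sum_{h}d_{[i]}(h^2)N(h)^{-w} = \zeta_{K,2}(w)^3/\zeta_{K,2}(2w)$, with a triple pole at $w=1$), to $\zeta_K(1+2s)^3$ times a Dirichlet series analytic for $\Re(s) > -\tfrac14 + \delta$ and polynomially bounded in $N(l)$ and $N(l_1)$. Shifting the contour past $s = 0$ one crosses a pole of order four (three from $\zeta_K(1+2s)^3$, one from $1/s$); since $\mathrm{Res}_{s=1}\zeta_K(s) = \pi/4$ this contributes the factor $(\pi/4)^3$ and a polynomial in $\log(X/N(l_1))$ of degree $3$, while the remaining local data --- the auxiliary Euler products, $G_2$, the weight $\widehat\Phi$ and its derivatives $\widehat\Phi^{(j)}(1)$, the local factors at $\varpi \mid l$ expanded to second order (producing the $\log^2 N(\varpi)$ terms), and the Taylor coefficients of $\zeta_K^{(j)}/\zeta_K$ at $s=0$ (producing the $\Lambda_j(m)\Lambda_k(n)$ sums with the degree-$\leq 2$ polynomials $Q_{j,k}$) --- assemble precisely into the constants $D$, $h(l)$, $d(l_1)/\sigma_{[i]}(l_1)$ and the package $O(l)$. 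Summing over $a$ and, via \eqref{eq:lambda} and \eqref{eq:xibound}, over the mollifier variables, and collecting the errors from truncating the $n$-sum ($\ll N(l)^{\varepsilon}X^{\varepsilon}/(\sqrt{N(l_1)}\,Y)$) and from the contour shift ($\ll N(l)^{\varepsilon}X^{\varepsilon}/(N(l_1)X)^{1/4}$) then gives \eqref{eq:2ndmoment}.

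The hard part will not be the shape of the main term, which is forced by the fourth-order pole, but the uniformity: every estimate above must carry an explicit power of $N(l)$ and $N(l_1)$, because the result is ultimately summed over $N(l) \leq Y$ and over the mollifier support $N(l_1 l_2) \leq M = X^{\theta/2}$ with $\theta$ close to $1$, and keeping each error strictly below the main term throughout is exactly what pins down the exponents appearing in \eqref{eq:2ndmoment}. The second delicate point is extracting the \emph{exact} polynomial $O(l)$ rather than a mere power saving, which I would handle, as in \cite{sound1}, by differentiating the relevant zeta quotients the appropriate number of times and tracking the Taylor expansions of all auxiliary Euler products; none of this is specific to $K = \mq(i)$, the only genuinely new ingredients over \cite{sound1} being the reciprocity law, the Gauss sum evaluation of Lemma~\ref{Gausssum} and the Poisson formula of Lemma~\ref{Poissonsumformodd}.
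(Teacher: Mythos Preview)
Your proposal is correct and matches the paper's own treatment: the paper omits the proof of Theorem~\ref{theo:2ndmoment} entirely, stating that it is ``similar to that of \cite[Proposition 1.2]{sound1}'' and noting only that the triple pole of $\zeta_K(1+2s)^3$ at $s=0$ contributes the factor $(\pi/4)^3$, which you have also identified. Your outline --- approximate functional equation, Poisson summation in the $f$-variable, separation into $k=0$ (main term from the fourth-order pole) and $k\neq 0$ (error term), and the careful bookkeeping of the $l$-dependence --- is precisely the transplant of Soundararajan's argument that the paper has in mind; you have also correctly spotted that the left-hand side of \eqref{eq:2ndmoment} as printed should really be the $M_Y$-part of the \emph{twisted} second moment with parameter $l$, the full mollified moment being recovered afterwards by summing over the mollifier.
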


  Combining \eqref{SL} and \eqref{eq:2ndmoment} and setting $Y=X^{\varepsilon}$, we see that
\begin{align*}
 & S(L(\half, \chi_{(1+i)^5d})^2M(d)^2; \Phi) \\
= & \frac {\pi^4}{4^3}  \frac{ D\widehat{\Phi}(1)}{36\zeta_{K}(2)}\sum_{\substack{l \equiv 1 \bmod {(1+i)^3}}}\left ( \sum_{\substack{r,s \equiv 1 \bmod {(1+i)^3} \\ rs=l}} \lambda(r)\lambda(s)\right )\frac {\sqrt{N(l)}}{h(l)}\frac {d_{[i]}(l_1)}{\sqrt{N(l_1)})} \frac {N(l_1)}{\sigma_{[i]}(l_1)} \\
& \times \left ( \log^3\frac {X}{N(l_1)}-3\sum_{\substack{\varpi \equiv 1 \bmod {(1+i)^3} \\ \varpi |l_1}}\log^2 N(\varpi)\log \frac {X}{N(l_1)}+O(l) \right )+O(X^{-\varepsilon}).
\end{align*}

  We write $r = a\alpha$ and $s = b\alpha$ where $a$ and $b$ are co-prime primary elements. As $\lambda$ is assumed to be supported on square-free elements, we deduce that $\alpha = l_2$ and $l_1 = ab$. Thus we obtain from the above that
\begin{align*}
 & S(L(\half, \chi_{(1+i)^5d})^2M(d)^2; \Phi) \\
= & \frac {\pi^4}{4^3} \frac{ D\widehat{\Phi}(1)}{36\zeta_{K}(2)}\sum_{\substack{\alpha \equiv 1 \bmod {(1+i)^3}}} \frac {N(\alpha)}{h(\alpha)}\sum_{\substack{a,b \equiv 1 \bmod {(1+i)^3} \\ (a,b)=1}}\frac {\lambda(a\alpha)}{h(a)}\frac {\lambda(b\alpha)}{h(b)}\frac {ad_{[i]}(a)}{\sigma_{[i]}(a)} \frac {b d_{[i]}(a)}{\sigma_{[i]}(b)} \\
& \times \left ( \log^3\frac {X}{N(ab)}-3\sum_{\substack{\varpi \equiv 1 \bmod {(1+i)^3} \\ \varpi |ab}}\log^2 N(\varpi)\log \frac {X}{N(ab)}+O(\alpha^2 ab) \right )+O(X^{-\varepsilon}).
\end{align*}

  Using the M\"obius function to remove the condition that $(a,b)=1$, we see that
\begin{align}
\label{secmoment0}
\begin{split}
 &  S(L(\half, \chi_{(1+i)^5d})^2M(d)^2; \Phi) \\
= & \frac {\pi^4}{4^3} \frac{ D\widehat{\Phi}(1)}{36\zeta_{K}(2)}\sum_{\substack{\alpha \equiv 1 \bmod {(1+i)^3}}} \frac {N(\alpha)}{h(\alpha)}\sum_{\substack{\beta \equiv 1 \bmod {(1+i)^3}}}\frac {\mu_{[i]}(\beta)}{h(\beta)^2} \frac {\beta^2 d_{[i]}(\beta)^2}{\sigma_{[i]}(\beta)^2}\sum_{\substack{a,b \equiv 1 \bmod {(1+i)^3} }}\frac {\lambda(a\alpha\beta)}{h(a)}\frac {\lambda(b\alpha\beta)}{h(b)}\frac {ad_{[i]}(a)}{\sigma_{[i]}(a)} \frac {b d_{[i]}(a)}{\sigma_{[i]}(b)} \\
& \times \left ( \log^3\frac {X}{N(ab\beta^2)}-3\sum_{\substack{\varpi \equiv 1 \bmod {(1+i)^3} \\ \varpi |ab\beta}}\log^2 N(\varpi)\log \frac {X}{N(ab\beta^2)}+O(\alpha^2\beta^2 ab) \right )+O(X^{-\varepsilon}).
\end{split}
\end{align}

   We further define a multiplicative function $H(n)$ on primary, square-free $n$ such that for any primary prime $\varpi$,
\begin{align*}
 H(\varpi)=1-\frac {4N(\varpi)}{h(\varpi)(N(\varpi)+1)^2}=1+O(\frac 1{N(\varpi)}).
\end{align*}

   By setting $\gamma=\alpha\beta$ in \eqref{secmoment0} and proceeding similarly to the arguments in Section 6.2 of \cite{sound1}, we deduce that the second mollified moment is
\begin{align}
\label{secmoment}
\begin{split}
 &  S(L(\half, \chi_{(1+i)^5d})^2M(d)^2; \Phi) \\
= & \frac {\pi^4}{4^3}  \frac{ D\widehat{\Phi}(1)}{36\zeta_{K}(2)}\sum_{\substack{\gamma \equiv 1 \bmod {(1+i)^3} }}\frac {N(\gamma)H(\gamma)}{h(\gamma)}\sum_{\substack{a,b \equiv 1 \bmod {(1+i)^3} \\ (a,b)=1}}\frac {\lambda(a\gamma)}{h(a)}\frac {\lambda(b\gamma)}{h(b)}\frac {ad_{[i]}(a)}{\sigma_{[i]}(a)} \frac {b d_{[i]}(a)}{\sigma_{[i]}(b)} \\
& \times \Big ( \log^3\frac {X}{N(ab)}-3 \log \frac {X}{N(ab)}\Big ( \sum_{\substack{\varpi \equiv 1 \bmod {(1+i)^3} \\ \varpi |a}}\log^2 N(\varpi)+ \sum_{\substack{\varpi \equiv 1 \bmod {(1+i)^3} \\ \varpi |b}}\log^2 N(\varpi) \Big )\Big )+O(\frac 1{\log X}).
\end{split}
\end{align}

\subsection{Optimizing the mollified moments}

  It follows from \eqref{secmoment} that the second mollified moment looks like
\begin{align}
\label{secondmollifiedmoment0}
 & \frac {\pi^4}{4^3} \frac{ D\widehat{\Phi}(1)}{36\zeta_{K}(2)}\log^3 X \sum_{\substack{\gamma \equiv 1 \bmod {(1+i)^3} }}\frac {N(\gamma)H(\gamma)}{h(\gamma)}\xi(\gamma)^2.
\end{align}

 As the above is a diagonal quadratic form of $\xi(\gamma)$, we see that in order to choose a mollifier
to minimize \eqref{secondmollifiedmoment0} for fixed \eqref{1stmollifiedmoment}, we need to choose $\xi(\gamma)$
so that it is proportional to
\begin{align*}
 & \frac {h(\gamma)g_1(\gamma)}{N(\gamma)H(\gamma)}\log (\sqrt{X}\gamma).
\end{align*}

   We shall here follow the choice made in \cite[(6.8)]{sound1} to choose for primary square-free $\gamma \leq M$ such that
\begin{align*}
  \xi(\gamma)=\frac {C}{D\log^3 M}\frac {h(\gamma)g_1(\gamma)}{N(\gamma)H(\gamma)}\log (\sqrt{X}\gamma).
\end{align*}
   We notice that the above choice of $\xi$ does satisfy the condition \eqref{eq:xibound}.

   Similar to \cite[(6.8)]{sound1}, we see have that (keeping in mind that the residue of $\zeta_K(s)$ at $s=1$ is $\pi/4$)
\begin{align}
\label{elemargm}
\begin{split}
 &  \frac {C^2}{D \log^3 M}\sum_{\substack{\gamma \equiv 1 \bmod {(1+i)^3} \\ N(\gamma) \leq x }}\mu^2_{[i]}((1+i)\gamma)\frac {h(\gamma)g_1(\gamma)^2}{N(\gamma)H(\gamma)} \\
=& \frac \pi{4} \frac {C^2}{2D}\prod_{\substack{\varpi \equiv 1 \bmod {(1+i)^3} \\ }}\left (1-\frac 1{N(\varpi)} \right )\left (1+  \frac {h(\varpi)g_1(\varpi)^2}{N(\varpi)H(\varpi)}\right )(\log (X) +O(1))\\
=& \frac \pi{4}\frac 49(\log (X) +O(1)).
\end{split}
\end{align}

  We apply \eqref{elemargm} to \eqref{1stmollifiedmoment} via partial summation to see that the first mollified moment is
\begin{align}
\label{1stmollifiedmom}
\begin{split}
 S(L(\half, \chi_{(1+i)^5d})M(d); \Phi)\sim & \frac {\pi^2}{4} \frac {C^2\widehat{\Phi}(1)}{D\zeta_{K}(2)\log^3 M}\sum_{\substack{\gamma \equiv 1 \bmod {(1+i)^3} \\ N(\gamma) \leq M }}\mu^2_{[i]}((1+i)\gamma)\frac {h(\gamma)g_1(\gamma)^2}{N(\gamma)H(\gamma)}\log^2 (\sqrt{X}\gamma) \\
\sim & \left ( \frac \pi{4} \right )^2 \frac 29 \left ( \left (1+\frac 1{\theta} \right )^3-\frac 1{\theta^3} \right ) \frac {2\pi\widehat{\Phi}(1)}{3\zeta_{K}(2)}.
\end{split}
\end{align}

  Now, we proceed to evaluate the second mollified moment for the chosen $\xi$. For this, we define for rational integers $j \geq 0$,
\begin{align*}
 & \xi_j(\gamma)= \sum_{\substack{a \equiv 1 \bmod {(1+i)^3}}}\frac {\lambda(a\gamma)}{h(a)}\frac {d_{[i]}(a)}{\sigma_{[i]}(a)} (\log N(a))^j.
\end{align*}

  Similar to \cite[(6.11a)-(6.11c)]{sound1}, we see that for primary square-free element $\gamma$ satisfying $N(\gamma) \leq M$, we have
\begin{align*}
\begin{split}
 \xi_1(\gamma)=& -\frac {C}{D \log^3 M}\frac {h(\gamma)g_1(\gamma)}{N(\gamma)H(\gamma)}\Big ( 2\log \frac {M}{N(\gamma)}\log (\sqrt{X}N(\gamma))+\log^2 \frac {M}{N(\gamma)}+O\Big(\log M(1+ \sum_{\substack{q \equiv 1 \bmod {(1+i)^3} \\ q | \gamma}}\frac {\log N(q)}{N(q)}\Big ) \Big ), \\
 \xi_2(\gamma)=& \frac {C}{D \log^3 M}\frac {h(\gamma)g_1(\gamma)}{N(\gamma)H(\gamma)}\Big (  \log^2 \frac {M}{N(\gamma)}\log (\sqrt{X}N(\gamma))+\frac 2{3}\log^3 \frac {M}{N(\gamma)}+O\Big ( \log^2 M(1+ \sum_{\substack{q \equiv 1 \bmod {(1+i)^3} \\ q | \gamma}}\frac {\log N(q)}{N(q)}\Big ) \Big ), \\
 \xi_3(\gamma) \ll &  \frac {|h(\gamma)g_1(\gamma)|}{N(\gamma)H(\gamma)}\lp 1+ \sum_{\substack{q \equiv 1 \bmod {(1+i)^3} \\ q | \gamma}}\frac {\log N(q)}{N(q)} \rp.
\end{split}
\end{align*}

  We now expand $\log^3 (X/N(ab))$ in terms of $\log X, \log N(a)$ and $\log N(b)$ to recast
\begin{align}
\label{secm:1stterm}
 & \frac {\pi^4}{4^3} \frac{ D\widehat{\Phi}(1)}{36\zeta_{K}(2)}\sum_{\substack{\gamma \equiv 1 \bmod {(1+i)^3} }}\frac {N(\gamma)H(\gamma)}{h(\gamma)}\sum_{\substack{a,b \equiv 1 \bmod {(1+i)^3} \\ (a,b)=1}}\frac {\lambda(a\alpha)}{h(a)}\frac {\lambda(b\alpha)}{h(b)}\frac {ad_{[i]}(a)}{\sigma_{[i]}(a)} \frac {b d_{[i]}(a)}{\sigma_{[i]}(b)} \log^3\frac {X}{N(ab)}
\end{align}
  as a linear combination of terms
\begin{align*}
 & \frac {\pi^4}{4^3} \frac{ D\widehat{\Phi}(1)}{36\zeta_{K}(2)}\sum_{\substack{\gamma \equiv 1 \bmod {(1+i)^3} }}\frac {N(\gamma)H(\gamma)}{h(\gamma)}\xi_j(\gamma)\xi_k(\gamma)\log^l X,
\end{align*}
  where $j+k+l=3$.

  We can evaluate these terms using the expressions for $\xi_i(\gamma), 1 \leq i \leq 3$. Then applying \eqref{elemargm}
and partial summation, we see that
\begin{align}
\label{1stermest}
  \eqref{secm:1stterm}  \sim \left ( \frac \pi{4} \right )^4 \left (\frac 2{81}+\frac {28}{135\theta}+\frac {11}{18\theta^2}+\frac {70}{81\theta^3}+\frac {16}{27\theta^4}+\frac {4}{27\theta^5} \right ) \frac{ 2\pi \widehat{\Phi}(1)}{3\zeta_{K}(2)}.
\end{align}

  This treats one of the terms given in \eqref{secmoment}. To treat the other terms, we proceed similarly to the treatments done on \cite[p. 485]{sound1} to see that for primary, square-free $\gamma$ such that $N(\gamma) \leq M$,
\begin{align*}
 &  \sum_{\substack{a \equiv 1 \bmod {(1+i)^3} }}\frac {\lambda(a\alpha)}{h(a)}\frac {ad_{[i]}(a)}{\sigma_{[i]}(a)}  \Big ( \sum_{\substack{\varpi \equiv 1 \bmod {(1+i)^3} \\ \varpi |a}}\log^2 N(\varpi) \Big ) \\
=& - \frac {C}{D \log^3 M}\frac {h(\gamma)g_1(\gamma)}{N(\gamma)H(\gamma)}\lp \log^2 \frac {M}{N(\gamma)}\log (\sqrt{X}N(\gamma))+\frac 23\log^3\frac {M}{N(\gamma)}+O(\log^2 X) \rp,
\end{align*}
   and that
\begin{align*}
 &  \sum_{\substack{a \equiv 1 \bmod {(1+i)^3} }}\frac {\lambda(a\alpha)}{h(a)}\frac {ad_{[i]}(a)}{\sigma_{[i]}(a)} \log N(a) \Big ( \sum_{\substack{\varpi \equiv 1 \bmod {(1+i)^3} \\ \varpi |a}}\log^2 N(\varpi) \Big )
 \ll   \frac {|h(\gamma)g_1(\gamma)|}{N(\gamma)H(\gamma)}\Big ( 1+ \sum_{\substack{q \equiv 1 \bmod {(1+i)^3} \\ q | \gamma}}\frac {\log N(q)}{N(q)} \Big ).
\end{align*}

   As consequences, we see that
\begin{align*}
 & -\left ( \frac \pi{4} \right )^4 \frac{ D\widehat{\Phi}(1)}{36\zeta_{K}(2)}\sum_{\substack{\gamma \equiv 1 \bmod {(1+i)^3} }}\frac {N(\gamma)H(\gamma)}{h(\gamma)}\sum_{\substack{a,b \equiv 1 \bmod {(1+i)^3} \\ (a,b)=1}}\frac {\lambda(a\alpha)}{h(a)}\frac {\lambda(b\alpha)}{h(b)}\frac {ad_{[i]}(a)}{\sigma_{[i]}(a)} \frac {b d_{[i]}(a)}{\sigma_{[i]}(b)} \\
& \times  \log \frac {X}{N(ab)}\Big ( \sum_{\substack{\varpi \equiv 1 \bmod {(1+i)^3} \\ \varpi |a}}\log^2 N(\varpi)+ \sum_{\substack{\varpi \equiv 1 \bmod {(1+i)^3} \\ \varpi |b}}\log^2 N(\varpi) \Big  ) \\
\sim & \left ( \frac \pi{4} \right )^4 \left (\frac 2{81}+\frac {4}{45\theta}+\frac {7}{54\theta^2}+\frac {2}{27\theta^3} \right ) \frac{ 2\pi \widehat{\Phi}(1)X}{3\zeta_{K}(2)}.
\end{align*}

   Combining the above with \eqref{1stermest},  we find that the second mollified moment is
\begin{align}
\label{secondmoment}
  \sim \left ( \frac \pi{4} \right )^4 \left (\frac 4{81}+\frac {8}{27\theta}+\frac {20}{27\theta^2}+\frac {76}{81\theta^3}+\frac {16}{27\theta^4}+\frac {4}{27\theta^5} \right ) \frac{ 2\pi \widehat{\Phi}(1)}{3\zeta_{K}(2)}.
\end{align}

    Applying Cauchy-Schwarz inequality together with the first mollified moment \eqref{1stmollifiedmom} and the second mollified
moment \eqref{secondmoment}, we have
\begin{align}
\label{comparison}
\begin{split}
 & \sum_{\substack{X \leq N(d) \leq 2X \\ (d, 2)=1 \\ L(\half, \chi_{(1+i)^5d}) \neq 0}}\mu_{[i]}(d)^2 \geq
\sum_{\substack{(d, 2)=1 \\ L(\half, \chi_{(1+i)^5d}) \neq 0}}\mu_{[i]}(d)^2\Phi(\frac {N(d)}{X}) \geq
X \frac {S(L(\half, \chi_{(1+i)^5d})M(d); \Phi)^2}{S(L(\half, \chi_{(1+i)^5d})^2M^2(d); \Phi)} \\
\geq & \left (1-\frac 1{(\theta+1)^3} \right )\frac {2\pi}{3\zeta_K(2)}X=\lp \frac 78+o(1) \rp \sum_{\substack{X \leq N(d) \leq 2X \\ (d, 2)=1 }}\mu_{[i]}(d)^2,
\end{split}
\end{align}
 since we have that (see \cite[Section 3.1]{G&Zhao4})
\begin{align*}
 & \sum_{\substack{X \leq N(d) \leq 2X \\ (d, 2)=1 }}\mu_{[i]}(d)^2 \sim \frac {2\pi}{3\zeta_K(2)}X.
\end{align*}
  We now set $\theta=1-\varepsilon$ in \eqref{comparison} to see that the assertion of Theorem \ref{thm: nonvanishing} follows by summing over $X=x/2^j$ for $j \geq 1$ and this completes the proof.

\vspace*{.5cm}

\noindent{\bf Acknowledgments.} P. G. is supported in part by NSFC grant 11871082.

\bibliography{biblio}
\bibliographystyle{amsxport}

\vspace*{.5cm}

\end{document}